\documentclass[preprint,3p]{elsarticle}

\usepackage{bm,stmaryrd,amsmath,amssymb}
\numberwithin{equation}{section}
\usepackage{tabularx,epsfig,color,tabularx,epstopdf}
\usepackage{epstopdf,caption,graphicx,subcaption,mathtools}
\usepackage{subcaption}
\usepackage{algorithm,algorithmic,multirow}
\usepackage{amsfonts,amsthm}
\usepackage{booktabs}
\usepackage{appendix,color}
\usepackage[colorlinks]{hyperref}
\usepackage{makecell}

\def\<{\left\langle}
\def\>{\right\rangle}

\newtheorem{exam}{Example}
%
\newcommand{\be}{\begin{equation}}
\newcommand{\ee}{\end{equation}}
\newcommand{\bes}{\begin{equation*}}
\newcommand{\ees}{\end{equation*}}
\newcommand{\ba}{\begin{array}}
\newcommand{\ea}{\end{array}}
\newcommand{\bea}{\begin{eqnarray}}
\newcommand{\eea}{\end{eqnarray}}
\newcommand{\beas}{\begin{eqnarray*}}
\newcommand{\eeas}{\end{eqnarray*}}

\newcommand{\fl}[2]{\frac{#1}{#2}}
\newcommand{\bx}{\bm{x}}

\newcommand{\im}{{\mathrm i}}

\newcommand{\p}{\partial}
\newcommand{\gm}{{\gamma}}

\newcommand{\phiu}{{\phi_1}}
\newcommand{\phid}{{\phi_2}}

\newcommand{\phiug}{{\phi_1^g}}
\newcommand{\phidg}{{\phi_2^g}}
\newcommand{\phiua}{{\phi_1^\alpha}}
\newcommand{\phida}{{\phi_2^\alpha}}

\newcommand{\psiu}{{\psi_1}}
\newcommand{\psid}{{\psi_2}}

\newcommand{\muu}{{\mu_1}}
\newcommand{\mud}{{\mu_2}}
\newcommand{\muua}{{\mu_1^\alpha}}
\newcommand{\muda}{{\mu_2^\alpha}}

\newcommand{\betauu}{\beta_{11}}
\newcommand{\betaud}{\beta_{12}}
\newcommand{\betadu}{\beta_{21}}
\newcommand{\betadd}{\beta_{22}}

\newcommand{\diag}{{\rm diag}}

\newcommand{\bu}{{\bm u}}
\newcommand{\bv}{{\bm v}}
\newcommand{\bbf}{{\bm f}}
\newcommand{\bbg}{{\bm g}}

\newcommand{\og}{{\omega}}
\newcommand{\spa}{{\rm span}}
\newcommand{\ns}{{\rm null}}
\newcommand{\cI}{{\mathcal I_N}}
\newcommand{\err}{{\texttt{E}}}

\newcommand{\uul}{{u_1^\ell}}
\newcommand{\udl}{{u_2^\ell}}
\newcommand{\vul}{{v_1^\ell}}
\newcommand{\vdl}{{v_2^\ell}}

\newcommand{\cA}{{\mathcal A}}
\newcommand{\cB}{{\mathcal B}}
\newcommand{\cC}{{\mathcal C}}
\newcommand{\cD}{{\mathcal D}}

\newcommand{\dif}{{\mathrm d}}

\newcommand{\cH}{{\mathcal H}}

\newcommand{\sH}{{\mathsf H}}

\newcommand{\bmat}{\begin{bmatrix}}
\newcommand{\emat}{\end{bmatrix}}

\newcommand{\lf}{\left(}
\newcommand{\rg}{\right)}

\newcommand{\lag}{\left\langle}
\newcommand{\rag}{\right\rangle}

\newtheorem{remark}{Remark}

\newtheorem{thm}{Theorem}
\newtheorem{lem}{Lemma}
\newtheorem{corollary}{Corollary}



%
%


\synctex=1
\begin{document}

\title{Computing the Bogoliubov-de Gennes excitations of two-component Bose-Einstein condensates}

\author[tju]{Manting XIE}
\ead{mtxie@tju.edu.cn}
\author[tju]{Yong ZHANG\corref{5}}
\ead{Zhang\_Yong@tju.edu.cn}

\cortext[5]{Corresponding author}

\begin{abstract}

In this paper, we present an efficient and spectrally accurate numerical method to compute elementary/collective excitations in two-component Bose-Einstein condensates (BEC), around their mean-field ground state, by solving the associated Bogoliubov-de Gennes (BdG) equation.
The BdG equation is essentially an eigenvalue problem for a non-Hermitian differential operator with an eigenfunction normalization constraint.
Firstly, we investigate its analytical properties, including the exact eigenpairs, generalized nullspace structure
and bi-orthogonality of eigenspaces.
Subsequently, by combining the Fourier spectral method for spatial discretization and
a stable modified Gram-Schmidt bi-orthogonal algorithm,
we propose a structure-preserving iterative method for the resulting large-scale dense non-Hermitian
discrete eigenvalue problem. Our method is matrix-free, and the matrix-vector multiplication (or the operator-function evaluation) is implemented
with a near-optimal complexity ${\mathcal O}(N_{\rm t}\log(N_{\rm t}))$, where $N_{\rm t}$ is the total number of grid points,
thanks to the utilization of the discrete Fast Fourier Transform (FFT).
Therefore, it is memory-friendly, spectrally accurate, and highly efficient.
Finally, we carry out a comprehensive numerical investigation to showcase its superiority in terms of accuracy and efficiency,
alongside some applications to compute the excitation spectrum and Bogoliubov amplitudes in one, two, and three-dimensional problems.
\end{abstract}

\begin{keyword}
two-component Bose-Einstein condensates, Bogoliubov-de Gennes excitations,
Fourier spectral method, bi-orthogonal structure-preserving, large-scale eigenvalue problem
\end{keyword}


\maketitle

\tableofcontents

\section{Introduction}

Early in 1996, right after the first realization of Bose-Einstein Condensations (BECs), researchers created a  two-component BEC using the $|F=2, m=2\rangle$ and $|F=1, m=-1\rangle$ states of $^{87}Rb$ via a double magneto-optic trap \cite{Myatt1997Spin1/2}.
This experiment demonstrated the simultaneous cooling of two condensates and observed inter-component interactions.
Building on this work, it was proposed that binary BEC systems could generate coherent matter waves, or atom lasers, similar to the coherent light emitted by lasers, driving significant interest in multi-component BEC systems.
Subsequent physical experiments have discovered unique quantum phenomena,
such as coherent atom transfer between hyperfine states
and the emergence of density and spin oscillation modes along with a stable equilibrium
\cite{AshhabPRA, Matthews1998Dynamical,Alkai-Japan,Stenger1998Spin, WilliamsPRA}.

At temperature $T$ much smaller than the critical temperature $T_c$ \cite{BaoCai2com,Bao2017Mathematical,Wang-2com}, the two-component BEC is well described by the following coupled Gross-Pitaevskii equations (CGPEs)

\begin{subequations}\label{GPEs}
\begin{align}
i\partial_t \psiu&= \left[-\frac{1}{2} \nabla^2  + V(\bx) + \frac{\delta}{2} + \Big(\betauu |\psiu|^2  + \betaud|\psid|^2\Big)\right]
\psiu + \frac{\Omega}{2}\psid, \label{2comp1} \\
i\partial_t \psid&= \left[-\frac{1}{2} \nabla^2  + V(\bx) - \frac{\delta}{2}+ \Big(\betadu |\psiu|^2  + \betadd|\psid|^2\Big)\right]
\psid + \frac{\Omega}{2}\psiu, \label{2comp2}
\end{align}
\end{subequations}
where $t>0$ denotes time and ${\bx}=x$, ${\bx}=(x, y)^\top$ and ${\bx}=(x, y, z)^\top$ is the Cartesian coordinate vector,
$\Psi(\bx,t) = (\psiu(\bx, t),\psid(\bx, t))^\top$ is the complex-valued macroscopic wave function corresponding to
the spin-up and spin-down components,
$V(\bx)$ is a real-valued  external potential that is case-dependent, and one common choice is the harmonic trapping potential, which reads as
\be\label{Vpoten}
V(\bx) = \frac{1}{2}\left\{\begin{array}{ll}
 \gm_x^2x^2, & d = 1,\\[0.3em]
 \gm_x^2x^2 + \gm_y^2y^2, & d = 2,\\[0.3em]
 \gm_x^2x^2 + \gm_y^2y^2 + \gm_z^2z^2, \ \ &d = 3,
\end{array}\right.
\ee
where $\gm_x$, $\gm_y$ and $\gm_z$ are the trapping frequencies in $x$-, $y$- and $z$-directions, respectively,
$\Omega$ is the effective Rabi frequency to realize the internal atomic Josephson junction by a Raman transition,
$\delta$ is the Raman transition constant, and $\beta_{jl}$ (with $\beta_{jl} = \beta_{lj}$) are the effective interaction
strength between the $j$-th and $l$-th components (positive for repulsive interaction
and negative for attractive interaction).

The GPEs \eqref{GPEs} conserves two important quantities:
the \textsl{mass}
\bea \label{constrain-C}
\mathcal N(\Psi(\cdot,t)):= \|\Psi(\cdot,t)\|^2 = \sum_j  \int_{\mathbb{R}^{d}}|\psi_j(\bx,t)|^{2}\dif\bx  = 1,
\quad t\ge 0,
\eea
and the \textsl{energy}
\bea\label{energy}
\mathcal{E}_\Omega(\Psi(\cdot,t))=
\int_{{\mathbb R}^d} \sum_{j}\frac{1}{2}|\nabla\psi_j|^2 + V(\bx)|\psi_j|^2  +
\sum_{j,l} \frac{\beta_{jl}}{2}|\psi_j|^2 |\psi_l|^2
+ \frac{\delta}{2}\left(|\psiu|^2\! -\! |\psid|^2\right)
+  \Omega\,\mathfrak{Re}(\psiu\overline\psid)\,\dif \bx,
\eea
where $\bar{f}$ and $\mathfrak{Re}(f)$ denotes the conjugate and real part of the complex-valued function $f(\bx)$ respectively.
In fact, we can construct stationary states $ \Phi=(\phi_1,\phi_2)^\top$ of \eqref{GPEs} in the following way
\bea\Psi(\bx,t) =e^{-\im \mu t} ~\Phi(\bx),\eea
where $\Phi$ and $\mu$ (the chemical potential) are solutions to the following Euler-Lagrange equations (a type of nonlinear eigenvalue problem):
\begin{subequations}\label{NLEP}
\begin{align}
\mu\, \phiu  = H_1 \phiu : = \left[-\frac{1}{2} \nabla^2  + V(\bx) + \frac{\delta}{2} + \big(\betauu |\phiu|^2 + \betaud |\phid|^2\big)\right]\phiu + \frac{\Omega}{2}\phid, \\[0.2em]
\mu \,\phid  = H_2\phid : = \left[-\frac{1}{2} \nabla^2  + V(\bx) - \frac{\delta}{2} + \big(\betadu |\phiu|^2 + \betadd |\phid|^2\big)\right]\phid + \frac{\Omega}{2}\phiu,
\end{align}
\end{subequations}
under the constraint
$\mathcal N(\Phi) := \|\Phi\|^2=1$.
The ground state, denoted by $\Phi_g = (\phiug,\phidg)^\top$, is a stationary state with the lowest energy and
can also be defined as the minimizer of the following non-convex optimization problem
\be\label{groundDef}
\Phi_g =\arg\min_{\Phi\in \mathcal S_M} \; \mathcal{E}_\Omega(\Phi),
\ee
where the constraint functional space $\mathcal{S}_M$ reads
$$\mathcal{S}_M:=\Big\{\Phi = (\phiu,\phid)^\top \ \big|\
   \mathcal N(\Phi) =1,~ \mathcal{E}_\Omega(\Phi)<\infty\Big\}.$$

\

For the case without internal Josephson junction ($\Omega = 0$), the mass
of each component is conserved, that is,
\be
\int_{\mathbb{R}^{d}}|\psi_1(\bx,0)|^{2}\dif\bx = \alpha \quad \mbox{ and    }\quad  \int_{\mathbb{R}^{d}}|\psi_2(\bx,0)|^{2}\dif\bx = 1-\alpha, \quad \alpha \in [0, 1].
\ee
The stationary state satisfies the following Euler-Lagrange equations
\begin{subequations}\label{NLEP2}
\begin{align}
   \muu\, \phiu = \left[-\frac{1}{2} \nabla^2  + V(\bx) + \frac{\delta}{2} + \big(\betauu |\phiu|^2 + \betaud |\phid|^2\big)\right]\phiu, \\[0.1em]
\mud\, \phid = \left[-\frac{1}{2} \nabla^2  + V(\bx) - \frac{\delta}{2} + \big(\betadu |\phiu|^2 + \betadd |\phid|^2\big)\right]\phid,
\end{align}
\end{subequations}
under constraints $\|\phiu\|^{2} = \alpha,~\|\phid\|^{2} = 1\!-\!\alpha$ with $\muu$ and $\mud$ being the chemical potentials associated with each component.
The ground state and its corresponding chemical potential, denoted by $\Phi_g^\alpha(\bx) = (\phiua,\phida)^\top$ and ${\muua}$ \& ${\muda}$,
could also be defined as the minimizer of the following problem
\be\label{groundDef2}
\Phi_g^\alpha =\arg\min_{\Phi\in \mathcal S_M^\alpha} \; \mathcal{E}_0(\Phi),
\ee
where $\mathcal S_M^\alpha$ is a nonconvex set defined as
\bea
\mathcal S_M^\alpha:= \left\{\Phi = (\phiu,\phid)^\top\, \Big|\, \|\phi_1\|^2= \alpha,
  ~\|\phi_2\|^2 = 1-\alpha, \, \mathcal{E}_0(\Phi)<\infty   \right\}.
\eea

The GPE \eqref{GPEs} holds for the two-component BECs at the mean-field level \cite{BaoCai2com,Bao2017Mathematical}.
Nonetheless, due to the many-body effects stemming from interatomic interactions,
excitations are observed even in the ground state.
These excitations can be treated as quasi-particles and are named as elementary or collective excitations \cite{Bloch2008Many}.
To properly account for such excitations, one must extend beyond mean-field theory.
The Bogoliubov theory provides a perturbative analysis around the ground state,
ultimately leading to an eigenvalue problem -- the renowned Bogoliubov-de Gennes (BdG) equation \cite{Baillie2017Collective, Deng2020Spin, Gao2020Numerical, Huhtamaeki2011Elementary}.

Since the first observations of excitation modes \cite{Jin1996Collective},
there has been a significant growing interest in mathematical and numerical studies devoted to the Bogoliubov-de Gennes (BdG) equations over the past few decades \cite{Boccato2020Acta,Hu2004Analytical,Stringari1996Collective}.
Along the numerical front, various research utilized the standard ARPACK library \cite{ARPACK} to solve the BdG equations for different types of BECs.
For an incomplete list of references, see \cite{Edwards1996Collective, Danaila2016Vector, Tang2022Spectrally, Deng2020Spin}.
For problems in lower dimensions ($d=1,2$), the resulting eigenvalue problems are typically solved using direct eigenvalue solvers,
such as MATLAB's ``eigs'' function, or iterative subspace solvers like those provided by ARPACK,
or locally optimal block-preconditioned conjugate gradient methods \cite{Bai2012Minimization, Bai2013Minimization}.

To date, there are quite few mathematical studies, and most numerical investigations have concentrated on lower-dimensional scenarios ($d=1$ or $2$), for example, the low-lying elementary (or collective) excitations in quasi-two-dimensional rotating systems \cite{Deng2020Spin, Zhang2021}.
Chen et al. \cite{Chen2017Collective} employed the Arnoldi method to directly solve the BdG equations for spin-orbit coupled BECs,
and Sadaka et al. \cite{BdG-FreeFEM} developed a numerical eigenvalue solver with FreeFEM and ARPACK.
More recently, a series of spectrally accurate eigenvalue solvers were proposed
for dipolar/spin-1 BECs based on ARPACK or Bi-Orthogonal Structure-Preserving solver (BOSP) \cite{LiWangZhang} in high space dimension \cite{LLXZ-BdGSpin1,Tang2022Spectrally,Zhang2021}.

So far as we know, when solving BdG equations numerically, two major challenges arise:
(i) the development of an accurate stationary state solver;
(ii) the design of an accurate and efficient eigensolver for the BdG equation.
For the stationary/ground state computation, several numerical methods have been proposed,
including the gradient flow method \cite{Bao2017Mathematical, Bao2007mass,Liu2021Normalized},
regularized Newton method \cite{TianCai} and Preconditioned Conjugate Gradient (PCG) method \cite{ATZ-CiCP-PCG} etc.
Here, in this article, we choose PCG \cite{ATZ-CiCP-PCG} for its excellent performance in accuracy,
efficiency and robustness.
 While the BdG equation generally admits eigenvalue zero, and, for current popular eigensolvers, the most challenging tasks consist of the
convergence at eigenvalue zero and the overall computational efficiency that usually bottlenecks numerical simulation, especially in high spatial dimensions.

It is well known that the eigenvalue distribution and structure of the generalized nullspace are of essential importance
to the design of eigensolvers to achieve better performance.
For the BdG equation, all the eigenvalues come in negative/positive pairs along the real line,
and the eigenspaces associated with eigenvalues of different magnitudes are biorthogonal to each other \cite{LiWangZhang}.
While for eigenvalue zero, the algebraic multiplicity is usually greater than its geometric counterpart,
therefore, the generalized nullspace has a very complicated structure.
Yet, not all such intrinsic properties and structures have been adequately taken into account in current solvers.
Simple adaptation of existing solvers, for example, ARPACK \cite{ARPACK} and LOBP4DCG \cite{Bai2012Minimization,Bai2013Minimization},
might suffer from slow convergence or even divergence, especially when there is an eigenvalue of zero,
and the performance is much more likely to degrade significantly in large-scale computation.
To make it worse, the eigenfunctions are typically discretized with the Fourier spectral method, leading to a fully populated dense eigensystem, and explicit matrix storage incurs prohibitive memory costs, especially for high-dimensional problems.
Therefore, it is imperative to develop an iterative algorithm where the user can provide a matrix-vector product implementation,
waiving any explicit matrix storage, thus enabling computation of large-scale problems in high space dimensions.

Recently, the BOSP algorithm, proposed by Li et al \cite{LiWangZhang}, successfully incorporated the eigenspaces' biorthogonality
using a modified Gram-Schmidt biorthogonalization algorithm,
achieving great efficiency and parallel scalability. Moreover, it provided an interactive interface through which the users are allowed to implement their matrix-vector product.
It shall perform better if the generalized nullspaces could be given \textit{a priori} whenever it is possible.
In this article, to adapt BOSP for our problem, we shall first investigate the generalized nullspace structure on the analytical level, then
design an efficient matrix-vector product, which should be an accurate approximation of operator-function evaluation, aiming
to achieve spectral-accuracy computation of all non-zero eigenvalues and their corresponding eigenfunctions.

\

Overall, the main objectives of this paper are as follows:
\begin{itemize}
\item[(1)] derive the Bogoliubov-de Gennes (BdG) equation for  two-component BECs around the ground state and investigate its mathematical structures, including analytical eigenvalues/eigenfunctions, the generalized nullspace, and the bi-orthogonality of eigenspaces;

\item[(2)] develop a bi-orthogonal structure-preserving Fourier spectral method and propose an effective implementation for evaluating the matrix-vector product based on the Fourier spectral method, especially FFT;

\item[(3)] verify the spectral accuracy, efficiency, and scalability of the method, and numerically study the excitation spectrum and Bogoliubov amplitudes around the ground state for different parameters in one-dimensional (1D) -- three-dimensional (3D) settings.
\end{itemize}

The rest of the paper is organized as follows:
In Section \ref{sec:BdG_Prop}, we introduce the BdG equations and derive some analytical properties.
In Section \ref{Numer_alg}, we present details of the Fourier spectral method for space discretization
and propose an efficient numerical method.
Extensive numerical examples are shown in Section \ref{NumResult} to confirm the performance of our method,
together with some applications to study the solutions to the BdG equations with different parameters in 1D--3D.
Finally, conclusions are drawn in Section \ref{sec:Conclusion}.

\section{The BdG equation and its properties}
\label{sec:BdG_Prop}

\subsection{The Bogoliubov-de Gennes equations}
\label{sec:BdG equation}
To characterize the elementary/collective excitations of a two-component BEC,
the Bogoliubov theory \cite{Baillie2017Collective,Deng2020Spin}
begins with the stationary state $\Phi$ of the CGPEs \eqref{GPEs} with an internal atomic Josephson junction ($\Omega\neq 0$),
which is also an solution to the nonlinear eigenvalue problems \eqref{NLEP} with corresponding chemical potential $\mu$,
and assumes the evolution is around state $\Phi$.
The corresponding wave function $\Psi$ takes the following form \cite{YiLowLying18,Tang2022Spectrally}
\bea\label{waveAssump} \quad
\Psi(\bx,t) = \exp({-\im \mu t})
\left[ \Phi(\bx) + \varepsilon \sum_{\ell=1}^\infty \Big( \bu^\ell(\bx) e^{-\im \omega_\ell t} + \bar\bv^\ell (\bx) e^{\im \omega_\ell t}\Big)\right],
\ \bx\in\mathbb{R}^d,\ t > 0.\ \
\eea
Here, $0<\varepsilon \ll 1$ is a small quantity used to control the population of quasiparticle excitation, $\mathbf{\omega_\ell\in \mathbb{R}}$
is the frequency of the excitations to be determined,
and $\bu^\ell, \bv^\ell \in H^1(\mathbb R^d;\mathbb C^2)$ are the Bogoliubov excitation modes
satisfying normalization condition
\bea
\label{constrain_org}
\int_{\mathbb R^d} \left( | \bu^\ell(\bx)|^2-  | \bv^\ell(\bx)|^2 \right) \dif \bx =
\int_{\mathbb R^d} \sum_{j}\left( | u^\ell_j(\bx)|^2-  | v^\ell_j(\bx)|^2 \right) \dif \bx =  1, \quad \ell\in \mathbb Z^{+}.
\eea
where $\bu^\ell = (\uul,\udl)^\top,\bv^\ell =(\vul,\vdl)^\top$.
Plugging \eqref{waveAssump} into \eqref{GPEs}, by collecting linear terms in $\varepsilon$ and separating frequency $e^{-\im\omega_\ell t}$ and $e^{\im\omega_\ell t}$,
we obtain the BdG equations as follows
\begin{gather}
\label{BdG_uv}
\mathcal L
\begin{bmatrix}
\bu \\
\bv
\end{bmatrix}
:=
\begin{bmatrix}
\cA & \cB \\
\cC  & \cD
\end{bmatrix}
\begin{bmatrix}
\bu \\
 \bv
\end{bmatrix}
=\omega\begin{bmatrix}
\bu\\
\bv
\end{bmatrix},
\end{gather}
with constraint
\be
\label{constrain}
\int_{\mathbb{R}^d} \left(|\bu(\bx)|^2-|\bv(\bx)|^2\right)\,\dif\bx=1,
\ee
where all the subscripts $\ell$ are omitted hereafter for simplicity, $\omega$ is the excitation energy and the operators are given explicitly
\bea
\cA := \begin{bmatrix}
   L_1 & \betaud\phiu\overline{\phid} + \frac{\Omega}{2} \\[0.4em]
\betadu\overline{\phiu}{\phid} + \frac{\Omega}{2} & L_2
\end{bmatrix},& &
\cB := \begin{bmatrix}
\betauu(\phiu)^2 & \betaud{\phiu}{\phid} \\[0.4em]
\betadu{\phiu}{\phid} & \betadd(\phid)^2
\end{bmatrix}, \label{def_B}\\
\cC:= -\begin{bmatrix}
\betauu(\overline\phiu)^2 & \betaud\overline{\phiu}\overline{\phid} \\[0.4em]
\betadu\overline{\phiu}\overline{\phid} & \betadd(\overline\phid)^2
\end{bmatrix}, & &
\cD :=- \begin{bmatrix}
L_1 & \betaud\overline{\phiu}{\phid} + \frac{\Omega}{2} \\[0.4em]
\betadu{\phiu}\overline{\phid} + \frac{\Omega}{2} & L_2
\end{bmatrix}, \label{def_D}
\eea
with
\beas
L_1 &=& -\frac{1}{2} \nabla^2  + V(\bx) + \frac{\delta}{2} + 2\betauu |\phiu|^2 + \, \betaud |\phid|^2 - \mu_g,\\
L_2 &=& -\frac{1}{2} \nabla^2  + V(\bx) - \frac{\delta}{2} + \, \betadu |\phiu|^2 + 2\betadd |\phid|^2 - \mu_g.
\eeas

It is easy to check that $\mathcal A$ and $\mathcal D$ are both Hermitian operators,
i.e., $\cA^* = \cA, ~\cD^*=\cD$, and $\cB^*= -\cC$, where symbol $*$ denotes
the adjoint operator  associated with inner product $\left\langle \bbf,\bbg \right\rangle
:=\sum_{j=1,2}\int_{\mathbb{R}^d} {f_j}(\bx)\overline{g_j(\bx)}\, \dif \bx$.
Furthermore, we have $\overline{\cA\bbg} = -\cD\bar\bbg$,
$\overline{\cB\bbg} = -\cC\bar\bbg$, $\forall\,\bbg\in H^1(\mathbb{R}^d;\mathbb C^2)$,
and it immediately implies the operators' finite-dimensional subspace representations,
denoted by matrices $A,B,C$ and $D$, are either Hermitian or symmetric, that is,
\bea
A^\sH = A, \quad D = -\overline A ,  \quad B^\top = B \quad \mbox{and} \quad C  = -\overline B.
\eea
The matrices representation of \eqref{BdG_uv} reads as follows
\begin{gather}
\begin{bmatrix}
A & B \\
-\overline B   &- \overline A
\end{bmatrix}
\begin{bmatrix}
{\mathbf u} \\
{\mathbf v}
\end{bmatrix}
=\omega\begin{bmatrix}
{\mathbf u} \\
{\mathbf v}
\end{bmatrix},
\end{gather}
and it coincides with the Bethe-Salpeter Hamiltonian (BSH) matrix arising from optical absorption spectrum analysis \cite{Shao2018structure}.

\

\

Similarly, if there is no internal atomic Josephson junction (i.e. $\Omega=0$) in \eqref{GPEs}, for any given
$\alpha \in [0, 1]$, the corresponding wave function $\Psi$ takes the following form \cite{YiLowLying18,Tang2022Spectrally}
\bea\label{waveAssump1} \quad
\Psi(\bx,t) =\diag\big(\exp({-\im \muua t}),\exp({-\im \muda t})\big)
\left[ \Phi_g^\alpha(\bx) + \varepsilon \sum_{\ell=1}^\infty \Big( \bu^\ell(\bx) e^{-\im \omega_\ell t} + \bar\bv^\ell (\bx) e^{\im \omega_\ell t}\Big)\right],\nonumber\\
\ \bx\in\mathbb{R}^d,\ t > 0,
\eea
and obtain the following similar BdGEs:
\begin{gather}
\label{BdG_uv-woJJ}
\mathcal L_\alpha
\begin{bmatrix}
\bu \\
\bv
\end{bmatrix}
:=
\begin{bmatrix}
\cA_\alpha & \cB_\alpha \\
\cC_\alpha  & \cD_\alpha
\end{bmatrix}
\begin{bmatrix}
\bu \\
\bv
\end{bmatrix}
=\omega\begin{bmatrix}
\bu\\
\bv
\end{bmatrix},
\end{gather}
with constraint \eqref{constrain},
where $\omega$ is the excitation energy corresponding to the excitation
mode $(\bu^\top,\bv^\top)^\top$ and and $\cA_\alpha$, $\cB_\alpha$, $\cC_\alpha$ and $\cD_\alpha$ are defined as
\beas
&\cA_\alpha = \begin{bmatrix}
L_1^\alpha & \betaud\phiua\overline{\phida}\\
\betadu\overline{\phiua}{\phida} & L_2^\alpha
\end{bmatrix},\quad
\cB_\alpha = \begin{bmatrix}
\betauu(\phiua)^2 & \betaud{\phiua}{\phida} \\
\betadu{\phiua}{\phida} & \betadd(\phida)^2
\end{bmatrix},\label{def_B1}\\
&\cC_\alpha = -\begin{bmatrix}
\betauu(\overline\phiua)^2 & \betaud\overline{\phiua}\overline{\phida} \\
\betadu\overline{\phiua}\overline{\phida} & \betadd(\overline\phida)^2
\end{bmatrix},\quad
\cD_\alpha = -\begin{bmatrix}
L_1^\alpha & \betaud\overline{\phiua}{\phida}\\
\betadu{\phiua}\overline{\phida}& L_2^\alpha
\end{bmatrix},\label{def_D1}
\eeas
with
\beas
L_1^\alpha = -\frac{1}{2} \nabla^2  + V(\bx) + \frac{\delta}{2} + 2\betauu |\phiua|^2 + \, \betaud |\phida|^2 - \muua,\\
L_2^\alpha = -\frac{1}{2} \nabla^2  + V(\bx) - \frac{\delta}{2} + \, \betadu |\phiua|^2 + 2\betadd |\phida|^2 - \muda.
\eeas

\subsection{Analytical properties}
\label{sec:Prop}
In this subsection, we derive some analytical properties of the BdG equations as well as the structure of eigenfunctions for the two-component BEC with internal atomic Josephson junction ($\Omega\neq 0$), which might serve as benchmarks for numerical solutions or help to design an efficient numerical method for eigenvalue problems.

Here, it should be noted that although the establishment of the model requires $\omega$ to be a real number, for the following model eigenvalue problem
\begin{gather}
\label{BdG_uv1}
\begin{bmatrix}
\cA & \cB \\
\cC  & \cD
\end{bmatrix}
\begin{bmatrix}
\bu \\
 \bv
\end{bmatrix}
=\lambda\begin{bmatrix}
\bu\\
\bv
\end{bmatrix},
\end{gather}
its eigenvalue $\lambda$ may be complex. Thus, we have the following theorem:

\begin{thm}[\textbf{Symmetric distribution}]\label{lem:negtive_eigenvalue}
If $\{\lambda; \bu, \bv\} (\lambda\in \mathbb C)$ is a solution pair to the eigenvalue problem \eqref{BdG_uv1},
then $\{-{\bar\lambda};\bar{ \bv}, \bar{ \bu} \}$ is also a solution pair.
Furthermore,  if $(\bu,\bv)$ satisfies the normalization constraint \eqref{constrain}, the corresponding eigenvalue is real, that is, $\omega\in \mathbb R$.

\end{thm}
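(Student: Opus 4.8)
The plan is to establish the two assertions in turn, using only the algebraic identities already recorded for the block operators: the adjoint relations $\cA^*=\cA$, $\cD^*=\cD$, $\cB^*=-\cC$ (equivalently $\cC^*=-\cB$), together with the conjugation rules $\overline{\cA\bbg}=-\cD\bar\bbg$ and $\overline{\cB\bbg}=-\cC\bar\bbg$. First I would rewrite \eqref{BdG_uv1} as the coupled pair $\cA\bu+\cB\bv=\lambda\bu$ and $\cC\bu+\cD\bv=\lambda\bv$. Taking the complex conjugate of each equation and pushing the conjugation inside the operators via the two conjugation rules (e.g. $\overline{\cA\bu}=-\cD\bar\bu$ and $\overline{\cB\bv}=-\cC\bar\bv$, and, after relabelling $\bbg\mapsto\bar\bu$ and $\bbg\mapsto\bar\bv$, $\overline{\cC\bu}=-\cB\bar\bu$ and $\overline{\cD\bv}=-\cA\bar\bv$) turns the conjugated system into $\cA\bar\bv+\cB\bar\bu=-\bar\lambda\,\bar\bv$ and $\cC\bar\bv+\cD\bar\bu=-\bar\lambda\,\bar\bu$. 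This is precisely $\mathcal{L}(\bar\bv,\bar\bu)^\top=-\bar\lambda\,(\bar\bv,\bar\bu)^\top$, so $\{-\bar\lambda;\bar\bv,\bar\bu\}$ is again a solution pair.

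For the reality statement I would pair the first equation with $\bu$ and the second with $\bv$ in the inner product $\langle\cdot,\cdot\rangle$, obtaining $\langle\cA\bu,\bu\rangle+\langle\cB\bv,\bu\rangle=\lambda\|\bu\|^2$ and $\langle\cC\bu,\bv\rangle+\langle\cD\bv,\bv\rangle=\lambda\|\bv\|^2$, and then subtract the second from the first. The right-hand side becomes $\lambda(\|\bu\|^2-\|\bv\|^2)$, which by the normalization constraint \eqref{constrain} equals $\lambda$ exactly. It therefore suffices to show that the left-hand side $\langle\cA\bu,\bu\rangle-\langle\cD\bv,\bv\rangle+\big(\langle\cB\bv,\bu\rangle-\langle\cC\bu,\bv\rangle\big)$ is real; the first two terms are automatically real because $\cA$ and $\cD$ are Hermitian.

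The crux — the one step that is not purely formal — is the cross term $\langle\cB\bv,\bu\rangle-\langle\cC\bu,\bv\rangle$, whose two summands are individually complex. Here I would invoke $\cC^*=-\cB$ to write $\langle\cC\bu,\bv\rangle=\langle\bu,\cC^*\bv\rangle=-\langle\bu,\cB\bv\rangle=-\overline{\langle\cB\bv,\bu\rangle}$, so that the cross term collapses to $\langle\cB\bv,\bu\rangle+\overline{\langle\cB\bv,\bu\rangle}=2\,\mathfrak{Re}\,\langle\cB\bv,\bu\rangle\in\mathbb{R}$. Hence the entire left-hand side is real, forcing $\lambda\in\mathbb{R}$. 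Conceptually this is the statement that $\Sigma\mathcal{L}$ is self-adjoint for $\Sigma=\diag(I,-I)$, so that $\lambda\,\langle\Sigma w,w\rangle=\langle\Sigma\mathcal{L}w,w\rangle$ is real and the factor $\langle\Sigma w,w\rangle=\|\bu\|^2-\|\bv\|^2=1\neq0$ can be divided out. I expect this pseudo-Hermitian (Krein-space) viewpoint to be the cleanest way to present the argument, and the nonvanishing of the normalization to be the essential hypothesis that cannot be dropped — without it a genuinely complex $\lambda$ is permitted, consistent with the first part of the theorem pairing $\lambda$ with $-\bar\lambda$.
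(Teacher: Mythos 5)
Your proposal is correct and follows essentially the same route as the paper: the first claim by conjugating the system (you spell out the conjugation rules $\overline{\cA\bbg}=-\cD\bar\bbg$, $\overline{\cB\bbg}=-\cC\bar\bbg$ that the paper invokes only implicitly), and the second by pairing the two equations with $\bu$ and $\bv$, subtracting, and using the normalization constraint. Your explicit check that the cross term $\lag\cB\bv,\bu\rag-\lag\cC\bu,\bv\rag=2\,\mathfrak{Re}\lag\cB\bv,\bu\rag$ is real via $\cC^*=-\cB$ merely fills in a step the paper's proof leaves to the reader; the underlying argument is identical.
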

\begin{proof} The first conclusion can be proved by taking the conjugate of Eqn. \eqref{BdG_uv}.
Multiplying the first/second equation of \eqref{BdG_uv} by $\bu$/$\bv$ respectively
and integrating each equation concerning $\bx$,
we subtract the second integration from the first one to obtain the following
\begin{equation}\label{real_eg3}
\lag \cA \bu, \bu\rag + \lag \cB \bv,\bu \rag - \lag\cC \bu,\bv\rag
- \lag\cD \bv,\bv\rag = \omega\int_{\mathbb{R}^d}(|\bu(\bx)|^2-|\bv(\bx)|^2)\,\dif \bx.
\end{equation}
From Eqn. \eqref{constrain}, we can see that $\omega$ is real.
\end{proof}

\begin{thm}[\textbf{Analytical eigenparis}]\label{lemAnalytical}
   Let $\Phi=(\phiu,\phid)^\top$ be a stationary state for Eqn. \eqref{GPEs} with harmonic trapping potential \eqref{Vpoten},
we have analytical solutions to the BdG equation \eqref{BdG_uv} as follows
\bea\label{eq:eig-pair}
\{\og_{\sigma};\bu_{\sigma},\bv_{\sigma}\}:=\left\{\gm_{\sigma} ; \fl{1}{\sqrt{2}} \lf \gm_{\sigma}^{-\fl{1}{2}}
\p_{\sigma} \Phi-\gm_{\sigma}^{\fl{1}{2}}{\sigma}\Phi \rg, \fl{1}{\sqrt{2}} \lf \gm_{\sigma}^{-\fl{1}{2}}\p_{\sigma}\overline\Phi+\gm_\sigma^{\fl{1}{2}}{\sigma}\overline\Phi \rg \right\},
\eea
with ${\sigma} = x$ in one dimension, ${\sigma} = x,y$ in two dimensions and ${\sigma}= x, y, z$ in three dimensions.
\end{thm}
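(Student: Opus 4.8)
The plan is to exhibit a two-dimensional subspace on which $\mathcal L$ acts by an explicit $2\times 2$ matrix, and then simply diagonalize that matrix. Introduce the two building-block vectors
\[
P := \begin{bmatrix} \p_\sigma\Phi \\[0.2em] \p_\sigma\overline\Phi \end{bmatrix},
\qquad
Q := \begin{bmatrix} \sigma\Phi \\[0.2em] -\sigma\overline\Phi \end{bmatrix},
\]
obtained respectively by differentiating the stationary state and by multiplying it by the coordinate $\sigma$. The eigenvector claimed in \eqref{eq:eig-pair} is, up to the scalar $\gm_\sigma^{-1/2}/\sqrt2$, precisely $P-\gm_\sigma Q$, so it suffices to understand the restriction of $\mathcal L$ to $\spa\{P,Q\}$.

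First I would establish the identity $\mathcal L P=-\gm_\sigma^2\, Q$. Differentiating the nonlinear eigenvalue problem \eqref{NLEP} with respect to $\sigma$ and using that the harmonic potential \eqref{Vpoten} satisfies $\p_\sigma V=\gm_\sigma^2\,\sigma$, one checks that the product-rule derivatives of the cubic terms $\beta_{jl}|\phi_l|^2\phi_j$ reproduce exactly the extra diagonal and anomalous couplings encoded in $\cA$ and $\cB$; this is precisely why $\cA$ carries the factor $2\betauu|\phiu|^2$ while the GPE carries only $\betauu|\phiu|^2$, and why the off-diagonal blocks $\cB,\cC$ are present. After all cancellations the only surviving term is the potential derivative, giving the first block $\cA\,\p_\sigma\Phi+\cB\,\p_\sigma\overline\Phi=-\gm_\sigma^2\sigma\Phi$; the second block $\cC\,\p_\sigma\Phi+\cD\,\p_\sigma\overline\Phi=\gm_\sigma^2\sigma\overline\Phi$ then follows at no extra cost by conjugation, using the relations $\overline{\cA\bbg}=-\cD\bar\bbg$ and $\overline{\cB\bbg}=-\cC\bar\bbg$.

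Second I would establish $\mathcal L Q=-P$. The key ingredient here is the commutator of the Laplacian with multiplication by $\sigma$, namely $\nabla^2(\sigma f)=\sigma\nabla^2 f+2\p_\sigma f$, which yields $L_j(\sigma\phi_j)=\sigma L_j\phi_j-\p_\sigma\phi_j$. Since every potential and nonlinear multiplication operator commutes with multiplication by $\sigma$, after using the stationary equation to rewrite $L_j\phi_j$ all interaction contributions cancel against the $\cB$/$\cC$ terms and only $-\p_\sigma\Phi$ (resp.\ $-\p_\sigma\overline\Phi$, again by conjugation) remains, giving $\mathcal L Q=-P$.

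Combining the two identities, $\mathcal L$ restricted to $\spa\{P,Q\}$ has, in the ordered basis $\{P,Q\}$, the matrix $\begin{bmatrix}0 & -1\\ -\gm_\sigma^2 & 0\end{bmatrix}$, whose eigenvalues are $\pm\gm_\sigma$. The eigenvalue $\gm_\sigma$ has eigenvector $P-\gm_\sigma Q$, which after the scaling in \eqref{eq:eig-pair} is exactly $(\bu_\sigma,\bv_\sigma)^\top$, completing the proof. I expect the main obstacle to be the bookkeeping in $\mathcal L P=-\gm_\sigma^2 Q$: one must differentiate every cubic nonlinearity carefully and verify that each resulting term cancels except for $\p_\sigma V\cdot\Phi$, a computation whose success is what pins down the specific coefficient pattern of the BdG blocks $\cA,\cB,\cC,\cD$.
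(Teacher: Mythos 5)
Your proposal is correct and is, at its core, the same computation as the paper's proof: your identity $\mathcal L P=-\gm_\sigma^2 Q$ is exactly the paper's pair of equations obtained by differentiating \eqref{NLEP} with respect to the coordinate (plus conjugation for the lower block), your identity $\mathcal L Q=-P$ is exactly the paper's pair obtained by multiplying \eqref{NLEP} by $\gm_\sigma\sigma$ and using the commutator $\nabla^2(\sigma f)=\sigma\nabla^2 f+2\p_\sigma f$, and diagonalizing the $2\times2$ matrix on $\spa\{P,Q\}$ is a repackaging of the paper's direct verification that $\mathcal L(P-\gm_\sigma Q)=\gm_\sigma(P-\gm_\sigma Q)$. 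Your packaging is slightly tidier and yields the companion eigenpair $\{-\gm_\sigma;\,P+\gm_\sigma Q\}$ for free, consistent with Theorem \ref{lem:negtive_eigenvalue}. The one step you omit, and which the paper carries out, is the normalization: the prefactors $\fl{1}{\sqrt 2}\gm_\sigma^{\mp 1/2}$ in \eqref{eq:eig-pair} are not arbitrary scalings of the eigenvector but are fixed by the constraint \eqref{constrain}, and verifying them requires the identity
\begin{equation*}
\int_{\mathbb R^d}\left(|\p_\sigma\Phi-\gm_\sigma\sigma\Phi|^2-|\p_\sigma\overline\Phi+\gm_\sigma\sigma\overline\Phi|^2\right)\dif\bx
=-2\gm_\sigma\int_{\mathbb R^d}\sigma\,\p_\sigma\big(|\Phi|^2\big)\,\dif\bx=2\gm_\sigma\|\Phi\|^2=2\gm_\sigma,
\end{equation*}
which follows by integration by parts and the mass constraint $\|\Phi\|^2=1$. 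Without this check you have produced an eigenvector of \eqref{BdG_uv} with eigenvalue $\gm_\sigma$, but not the specific normalized eigenpair asserted in the theorem; adding the display above completes the argument.
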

\begin{proof}
For simplicity, we only prove the ${\sigma} =x$ case, and extensions to other spatial variables are similar.
Differentiate \eqref{NLEP} with respect to $x$, we derive
\bea
L_1(\partial_x\phiu)  + \gamma_x^2x\phiu + \betauu(\phiu)^2(\partial_x\overline{\phiu}) + \betaud\Big[\phiu\overline{\phid}(\partial_x\phid)+\phiu{\phid}(\partial_x\overline{\phid})\Big] + \frac{\Omega}{2}\partial_x\phid&=& 0,\label{ev1}\\[0.2em]
L_2(\partial_x\phid)  + \gamma_x^2x\phid +  \betadu\Big[\overline{\phiu}{\phid}(\partial_x\phid)+\phiu{\phid}(\partial_x\overline{\phiu})\Big] + \betadd(\phid)^2(\partial_x\overline{\phid}) + \frac{\Omega}{2}\partial_x\phiu&=& 0. \label{ev2}
\eea
Multiply both sides of \eqref{NLEP} by $\gamma_xx$, we have
\bea
L_1(\gamma_xx\phiu)  -\betauu|\phiu|^2 (\gamma_xx\phiu) + \gamma_x(\partial_x\phiu) + \frac{\Omega}{2}\gamma_xx\phid&=& 0, \label{ev3}\\
L_2(\gamma_xx\phid)  -\betadd|\phid|^2 (\gamma_xx\phid) + \gamma_x(\partial_x\phid) + \frac{\Omega}{2}\gamma_xx\phiu&=& 0. \label{ev4}
\eea
By subtracting Eqn. \eqref{ev3} from Eqn. \eqref{ev1} and Eqn. \eqref{ev4} from Eqn. \eqref{ev2}, we get
\be\label{aeq1}
\cA(\partial_x\Phi-\gamma_xx\Phi) + \cB(\partial_x\overline\Phi+\gamma_xx\overline\Phi) = \gamma_x (\partial_x\Phi-\gamma_xx\Phi).
\ee
Summing Eqn. \eqref{ev1}\&\eqref{ev3} and Eqn. \eqref{ev2}\&\eqref{ev4}, and taking complex conjugates, we obtain
\be\label{aeq2}
\cC(\partial_x\Phi-\gamma_xx\Phi) + \cD(\partial_x\overline\Phi+\gamma_xx\overline\Phi) = \gamma_x (\partial_x\overline\Phi+\gamma_xx\overline\Phi).
\ee
Therefore, we can verify that $\{\partial_x\Phi-\gamma_xx\Phi, \partial_x\overline\Phi+\gamma_xx\overline\Phi\}$
solves \eqref{HpHm-eq} with $\omega =\gamma_x$.

Simple calculations lead to the following identity
\bes
\int_{\mathbb{R}^d} \left(|\partial_x\Phi-\gamma_xx\Phi|^2-|\partial_x\overline\Phi+\gamma_xx\overline\Phi|^2\right)\,\dif\bx=-2\int_{\mathbb{R}^d}\left( \gamma_xx\Phi\partial_x\overline\Phi + \gamma_xx\overline\Phi\partial_x\Phi  \right)\,\dif\bx = 2{\gamma_x},
\ees
then we derive the analytic solutions as \beas
\og_x=\gm_x, \quad \bu_x=\fl{1}{\sqrt{2}}\Big(\gm_x^{-\fl{1}{2}}\p_x\Phi-\gm_x^{\fl{1}{2}}x\Phi\Big), \quad
\bv_x=\fl{1}{\sqrt{2}}\Big(\gm_x^{-\fl{1}{2}}\p_x\overline\Phi+\gm_x^{\fl{1}{2}}x\overline\Phi\Big).\eeas
\end{proof}

As shown in \cite{Bao2017Mathematical}, under appropriate assumptions,  the ground state $\Phi_g=(\phiug,\phidg)^\top$ are \textbf{real-valued} functions.
Therefore, we shall focus on the real ground state hereafter, and all operators involved are real.
To be more specific,
\bes\label{real-oper}
\cD = -\cA,\quad \cC = -\cB,
\ees
then the BdG equation \eqref{BdG_uv} becomes a linear response problem of the following form
\begin{gather}
\label{BdG_new}
\begin{bmatrix}
\cA & \cB \\
-\cB  & -\cA
\end{bmatrix}
\begin{bmatrix}
\bu\\
\bv
\end{bmatrix}
=\omega\begin{bmatrix}
\bu\\
\bv
\end{bmatrix}.
\end{gather}

By applying a change of variables $\bu=\bbf+\bbg,~\bv=\bbf-\bbg$, the above equation can be reformulated
\bea \label{HpHm-eq}
\cH\begin{bmatrix}
	\bbf  \\ \bbg
	\end{bmatrix} :=
\begin{bmatrix}
     \mathcal O&\mathcal H_- \\
     	\mathcal H_+&\mathcal O
\end{bmatrix}
\begin{bmatrix}
	\bbf  \\ \bbg
	\end{bmatrix} =
	\omega
\begin{bmatrix}
	 \bbf  \\ \bbg
\end{bmatrix},
\eea
where $\mathcal H_+:=\cA+\cB$ and $\mathcal H_-:=\cA-\cB$ are both Hermitian operators, and the constraint \eqref{constrain} is reformulated as
$\lag\bbf,{\bbg}\rag=1/4$.
Equation \eqref{HpHm-eq} immediately leads to two decoupled linear eigenvalue problems, that is,
\bea
\mathcal H_-\mathcal H_+\bbf=\og^2\bbf, \quad \quad
\mathcal H_+\mathcal H_-\bbg=\og^2\bbg. \label{de-eigen}
\eea


\begin{thm}[\textbf{Biorthogonality}]\label{lem:orth}
Assume $\{\omega_i; \bbf_i, \bbg_i\}_{i=1}^2$ are eigenpairs of Eqn. \eqref{HpHm-eq}
with eigenvalues of different magnitudes, i.e., $|\omega_1|\neq|\omega_2|$,
the following biorthogonal properties hold true
\begin{equation*}
\langle \bbf_1,\bbg_2 \rangle
=
\langle \bbf_2,\bbg_1 \rangle
=0.
\end{equation*}
\end{thm}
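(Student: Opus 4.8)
The plan is to reduce the block eigenvalue problem \eqref{HpHm-eq} to two scalar identities linking the two bilinear quantities $a:=\langle\bbf_1,\bbg_2\rangle$ and $b:=\langle\bbg_1,\bbf_2\rangle$, and then to eliminate one against the other. First I would read off from \eqref{HpHm-eq} the component relations $\mathcal H_-\bbg_i=\omega_i\bbf_i$ and $\mathcal H_+\bbf_i=\omega_i\bbg_i$ for $i=1,2$, and record that the relevant eigenvalues are real (by Theorem \ref{lem:negtive_eigenvalue}), so that no spurious conjugate appears when a scalar is pulled out of the second slot of the inner product $\langle\cdot,\cdot\rangle$.

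The core of the argument exploits that $\mathcal H_+$ and $\mathcal H_-$ are both Hermitian. Pairing $\mathcal H_+\bbf_1$ against $\bbf_2$ and moving $\mathcal H_+$ onto the second factor gives $\langle\mathcal H_+\bbf_1,\bbf_2\rangle=\langle\bbf_1,\mathcal H_+\bbf_2\rangle$, i.e. $\omega_1\langle\bbg_1,\bbf_2\rangle=\omega_2\langle\bbf_1,\bbg_2\rangle$, which in the shorthand above reads $\omega_1 b=\omega_2 a$. Performing the analogous manipulation with $\mathcal H_-$, namely evaluating $\langle\mathcal H_-\bbg_1,\bbg_2\rangle=\langle\bbg_1,\mathcal H_-\bbg_2\rangle$, produces the companion relation $\omega_1 a=\omega_2 b$.

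With these two relations in hand the elimination is immediate: multiplying $\omega_1 a=\omega_2 b$ by $\omega_1$ and substituting $\omega_1 b=\omega_2 a$ yields $\omega_1^2 a=\omega_2^2 a$, hence $(\omega_1^2-\omega_2^2)a=0$, and symmetrically $(\omega_1^2-\omega_2^2)b=0$. Since $|\omega_1|\neq|\omega_2|$ forces $\omega_1^2\neq\omega_2^2$, I conclude $a=b=0$, that is $\langle\bbf_1,\bbg_2\rangle=0$ and $\langle\bbg_1,\bbf_2\rangle=0$; the remaining claim $\langle\bbf_2,\bbg_1\rangle=0$ then follows at once because $\langle\bbf_2,\bbg_1\rangle=\overline{\langle\bbg_1,\bbf_2\rangle}=\overline{b}$.

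The only delicate points, rather than any genuine obstacle, are bookkeeping ones: I must track the conjugate-linearity of $\langle\cdot,\cdot\rangle$ in its second argument and therefore invoke the reality of $\omega_1,\omega_2$, so that the two scalar identities emerge with matching (unconjugated) coefficients; otherwise the elimination would produce the factor $\omega_1^2-\overline{\omega_2}^{\,2}$ and the hypothesis $|\omega_1|\neq|\omega_2|$ would not close the argument. I would also note that the derivation never divides by $\omega_i$, so it stays valid even if one eigenvalue vanishes, provided the two differ in magnitude.
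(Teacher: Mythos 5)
Your proof is correct and is essentially the paper's own argument: the paper composes the two rows of \eqref{HpHm-eq} into the decoupled second-order equations \eqref{de-eigen} and moves the Hermitian product $\mathcal H_-\mathcal H_+$ across the inner product in a single line, whereas you keep the two first-order identities and eliminate between them, which is the same use of the Hermitian symmetry of $\mathcal H_+$ and $\mathcal H_-$ and produces the same factor $\omega_1^2-\omega_2^2$. One small correction to your closing remark: reality of the eigenvalues is not actually needed, since even with the conjugate present the hypothesis $|\omega_1|\neq|\omega_2|$ gives $|\omega_1^2|\neq\bigl|\overline{\omega_2}^{\,2}\bigr|$, hence $\omega_1^2\neq\overline{\omega_2}^{\,2}$ and the elimination still closes --- which is just as well, because Theorem \ref{lem:negtive_eigenvalue} guarantees reality only under the normalization constraint \eqref{constrain}, which the eigenpairs in this theorem are not assumed to satisfy.
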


\begin{proof}
Using Eqn. \eqref{de-eigen}, we have
\bes
\og_i^2\lag\bbf_i,\bbg_j\rag  = \lag\mathcal H_-\mathcal H_+\bbf_i,\bbg_j\rag = \lag\bbf_i,\mathcal H_+\mathcal H_-\bbg_j\rag =  \og_j^2\lag\bbf_i,\bbg_j\rag,
\ees
which means
\bes
(\og_i^2 - \og_j^2)\lag\bbf_i,\bbg_j\rag = 0.
\ees
Since $|\og_i|\neq|\og_j|$, we obtain $\lag\bbf_i,\bbg_j\rag = 0, \ \  \mbox{for}\ i\neq j$.
\end{proof}

\begin{remark}
In this subsection, we only focus on the case with an internal atomic Josephson junction ($\Omega\neq 0$).
Actually, Theorems \ref{lem:negtive_eigenvalue}-\ref{lem:orth} also hold for the case without an internal atomic Josephson junction ($\Omega=0$),
and they are omitted here for brevity.
\end{remark}

\subsection{Generalized nullspace}
\label{sec:rootspace}
As is known, the algebraic/geometric multiplicity of eigenvalue zero and the structure of its associated generalized nullspace
are of great importance to the solver's performance in terms of convergence, accuracy and efficiency.
The generalized nullspace of $\mathcal H$ is defined as nullspace of $\mathcal H^{q}$ for some positive integer $p$ such that
\begin{equation*}
\ns(\cH^p)=\ns(\cH^{p+1}).
\end{equation*}
For the operator $\cH$, the integer $p$ is finite and $\ns(\cH^p)=\ns(\cH^{q})$ holds true for any greater integer $q$, i.e, $q \geq p$.
In this subsection, we shall elaborate on the structure of generalized nullspace, which is, of course closely connected with the nullspace of $\ mathcalH H_-$ and $\mathcal H_+$.

\vspace{0.35cm}


\subsubsection{With an internal atomic Josephson junction ($\Omega\neq 0$)}

First and foremost, we shall present the conditions for the existence and uniqueness of the ground state solution of \eqref{groundDef}.

\begin{lem} [\textbf{Existence and uniqueness of the ground state} \cite{BaoCai2com}]\label{lem-eu-wjj} Suppose $V(x) \geq 0$ satisfying $\lim_{|x|\rightarrow \infty}V(x) =\infty$ and at least one of the following conditions holds,\\
	\indent \ \ (i) $d = 1$;\\
	\indent \ (ii) $d = 2$ and $\betauu \geq C_b, \betadd \geq -C_b$, and $\betaud=\betadu\geq -C_b-\sqrt{C_b +\betauu}\sqrt{C_b +\betadd}$;\\
	\indent (iii) $d = 3$ and $\begin{bsmallmatrix}\betauu& \betaud\\\betadu &\betadd\end{bsmallmatrix}$ is either positive semi-definite or nonnegative,\\
	there exists a ground state $\Phi_g = (\phiug,\phidg)^\top$ of \eqref{groundDef}. Furthermore, if the matrix $\begin{bsmallmatrix}\betauu& \betaud\\\betadu &\betadd\end{bsmallmatrix}$ is positive
	semi-definite and at least one of the parameters $\delta$, $\Omega$, $\betauu-\betadd$ and $\betauu-\betaud$ are nonzero, then the ground
	state $(|\phiug|,-\operatorname{sign}(\Omega)|\phidg|)^\top$ is unique.
\end{lem}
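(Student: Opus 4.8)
The plan is to obtain existence by the direct method in the calculus of variations and uniqueness by a convexity argument carried out after a sign reduction. First I would check that $\mathcal{E}_\Omega$ is bounded from below on $\mathcal S_M$. The potential term is nonnegative and the Rabi term $\Omega\int_{\mathbb R^d}\mathfrak{Re}(\phiu\overline\phid)\,\dif\bx$ is controlled by Cauchy--Schwarz, so the only possible loss of coercivity comes from the quartic interaction $\frac12\sum_{j,l}\beta_{jl}\int_{\mathbb R^d}|\phi_j|^2|\phi_l|^2\,\dif\bx$. In $d=1$ the Gagliardo--Nirenberg inequality gives $\|\phi_j\|_{L^4}^4\le C\|\phi_j\|_{L^2}^3\|\nabla\phi_j\|_{L^2}$, which is subcritical in the kinetic norm, so Young's inequality absorbs the interaction for any signs of $\beta_{jl}$. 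In $d=2$ the quartic term is $L^2$-critical and the sharp threshold $C_b$ (the reciprocal of the best constant in $\|u\|_{L^4}^4\le C\|u\|_{L^2}^2\|\nabla u\|_{L^2}^2$) appears; the stated inequalities on $\betauu,\betadd,\betaud$ are precisely the condition that the resulting quadratic form in $(\|\nabla\phiu\|_{L^2}^2,\|\nabla\phid\|_{L^2}^2)$ stay coercive. In $d=3$ the term is kinetically supercritical, and positive semi-definiteness (or nonnegativity) of the matrix $B:=(\beta_{jl})_{j,l=1,2}$ makes the interaction itself nonnegative, removing the loss. In every case the lower bound simultaneously produces a uniform $H^1$ bound along any minimizing sequence.

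Next I would extract a minimizer. Since $V(\bx)\ge 0$ with $V(\bx)\to\infty$ as $|\bx|\to\infty$, the weighted energy space embeds compactly into $L^2(\mathbb R^d;\mathbb C^2)$, and interpolating with the $H^1$ bound yields compactness in $L^4$ as well. For a minimizing sequence $\{\Phi^{(n)}\}$, the $H^1$ bound gives a weakly convergent subsequence whose limit $\Phi_g$ is its strong $L^2$- and $L^4$-limit; the compact $L^2$-convergence preserves the constraint $\mathcal N(\Phi_g)=1$, the kinetic and potential energies are weakly lower semicontinuous, and the interaction and Rabi terms pass to the limit by strong $L^4$-convergence. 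Hence $\Phi_g\in\mathcal S_M$ realizes the infimum in \eqref{groundDef} and, as a constrained critical point, solves \eqref{NLEP}.

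For uniqueness I would first reduce to sign-definite states. Because $\int_{\mathbb R^d}|\nabla|\phi_j||^2\le\int_{\mathbb R^d}|\nabla\phi_j|^2$ while the potential and interaction terms depend only on $|\phi_j|$, replacing each component by its modulus cannot raise the energy; writing each $\phi_j=|\phi_j|e^{\im\theta_j}$ further shows that constant phases are optimal and that the Rabi term is minimized by the relative sign $\cos(\theta_1-\theta_2)=-\operatorname{sign}(\Omega)$, singling out the candidate $(|\phiug|,-\operatorname{sign}(\Omega)|\phidg|)^\top$. It then suffices to prove uniqueness among such states, which I would do in the density variables $\rho_j=|\phi_j|^2$: the kinetic energy equals the Fisher information $\frac18\int_{\mathbb R^d}\frac{|\nabla\rho_j|^2}{\rho_j}$, which is jointly convex in $(\rho_j,\nabla\rho_j)$; the interaction $\frac12\int_{\mathbb R^d}\rho^\top B\,\rho$ is convex exactly when $B$ is positive semi-definite; and the reduced Rabi term $-|\Omega|\int_{\mathbb R^d}\sqrt{\rho_1\rho_2}$ is convex since $\sqrt{\rho_1\rho_2}$ is concave. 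Thus $\mathcal{E}_\Omega$ becomes a convex functional of $(\rho_1,\rho_2)$ over the convex constraint set determined by $\rho_j\ge 0$ and $\int_{\mathbb R^d}(\rho_1+\rho_2)\,\dif\bx=1$.

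The hard part is upgrading convexity to the strict convexity required for uniqueness. Convexity alone only forces the set of minimizing densities to be convex, so I would have to rule out a nontrivial segment of minimizers. Along such a segment the Fisher-information term, being strictly convex off parallel directions, forces $\nabla\sqrt{\rho_j}$ at the two endpoints to agree up to the mass normalization, while equality in the interaction and Rabi convexity inequalities constrains the densities further; the hypothesis that at least one of $\delta$, $\Omega$, $\betauu-\betadd$, $\betauu-\betaud$ is nonzero is exactly what removes the residual exchange/scaling degeneracy that would otherwise permit distinct endpoints, pinning down $\rho_1,\rho_2$ and hence $(|\phiug|,-\operatorname{sign}(\Omega)|\phidg|)^\top$ uniquely. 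Making this no-flat-direction argument rigorous, together with handling the convexity of $\int_{\mathbb R^d}\frac{|\nabla\rho_j|^2}{\rho_j}$ on the nodal set where $\rho_j$ vanishes, is where the principal technical difficulty lies.
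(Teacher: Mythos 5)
First, a point of reference: the paper itself does not prove this lemma at all --- it is imported, with citation, from Bao and Cai \cite{BaoCai2com}, so the only meaningful comparison is against that source. Your reconstruction follows the same strategy as the cited proof: the direct method for existence (with the dimension-dependent coercivity analysis --- subcritical Gagliardo--Nirenberg in $d=1$, the critical threshold $C_b$ in $d=2$, nonnegativity/semi-definiteness of the interaction matrix in $d=3$), compactness of the energy space in $L^2$ coming from the confining potential, and, for uniqueness, reduction to sign-definite components followed by convexity of the energy in the density variables $(\rho_1,\rho_2)=(|\phiu|^2,|\phid|^2)$. The existence half of your outline is sound and essentially complete. (Incidentally, the printed condition $\betauu \geq C_b$ in (ii) is presumably a typo for $\betauu \geq -C_b$, as in \cite{BaoCai2com}; your reading of (ii) as a coercivity condition is the intended one.)

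The uniqueness half, however, has a genuine gap, and you locate it yourself: everything after ``the hard part is upgrading convexity to strict convexity'' is assertion, not proof. Convexity of the three ingredients (the Fisher information, the quadratic form with $B=(\beta_{jl})$ positive semi-definite, and $-|\Omega|\int\sqrt{\rho_1\rho_2}$) only shows that the set of minimizing density pairs is convex; the lemma requires converting the hypothesis ``at least one of $\delta$, $\Omega$, $\betauu-\betadd$, $\betauu-\betaud$ is nonzero'' into the statement that this convex set is a single point, and that is exactly the step you do not carry out. Concretely, one must run the equality-case analysis along a segment of minimizers: equality in the interaction inequality forces the pointwise difference of the density vectors to lie in $\ker B$; equality in the concavity of the geometric mean (when $\Omega\neq 0$) forces pointwise proportionality of the two density pairs; and equality in the convexity inequality for gradients has its own rigidity statement, which is only usable once one knows the ground-state densities are strictly positive --- a maximum-principle/regularity input absent from your outline. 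One then needs a case analysis over which of the four parameters is nonzero. That the hypothesis is genuinely needed, and not merely convenient, is shown by the degenerate case $\delta=\Omega=0$, $\betauu=\betaud=\betadd=\beta>0$: there $B$ is positive semi-definite, the energy depends on the densities only through $\rho_1+\rho_2$, and the family $(\sqrt{\alpha}\,\phi,\sqrt{1-\alpha}\,\phi)$, $\alpha\in[0,1]$, with $\phi$ the corresponding single-component ground state, is a continuum of distinct minimizers. So the nondegeneracy condition must enter the equality analysis in an essential, quantitative way; asserting that it ``is exactly what removes the residual degeneracy'' names the conclusion rather than proving it.
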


\begin{thm}[\textbf{Nullspace of $\cH_-$ for system with JJ ($\Omega\neq0$)}]\label{TCC}
	Under conditions of Lemma  \ref{lem-eu-wjj}, we have the following property for nullspace of $\mathcal H_-$
	$$\spa \{\Phi_g\}\subset \ns(\cH_-).$$
\end{thm}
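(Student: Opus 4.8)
The plan is to verify directly that $\cH_-\Phi_g=0$, which immediately yields $\spa\{\Phi_g\}\subset\ns(\cH_-)$. First I would assemble $\cH_-=\cA-\cB$ explicitly on the real-valued ground state, so that $\overline{\phiug}=\phiug$ and $\overline{\phidg}=\phidg$. Under this reduction the off-diagonal entries of $\cA-\cB$ collapse to $\Omega/2$, because the interaction pieces $\betaud\,\phiug\phidg$ present in $\cA$ and in $\cB$ cancel, while each diagonal entry $L_j-\beta_{jj}|\phi_j^g|^2$ reduces the cubic self-interaction coefficient from $2\beta_{jj}$ in $L_j$ down to $\beta_{jj}$.

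Next I would apply $\cH_-$ to $\Phi_g=(\phiug,\phidg)^\top$ componentwise. The first component becomes
\bes
\left[-\frac{1}{2}\nabla^2 + V(\bx) + \frac{\delta}{2} + \betauu|\phiug|^2 + \betaud|\phidg|^2 - \mu_g\right]\phiug + \frac{\Omega}{2}\phidg,
\ees
with the second component being the analogous expression carrying $-\delta/2$, the coefficients $\betadu,\betadd$, and the two components interchanged. These are precisely the left-hand sides of the stationary (nonlinear eigenvalue) equations \eqref{NLEP}, rearranged as $H_j\phi_j^g-\mu_g\phi_j^g$ and evaluated at the chemical potential $\mu=\mu_g$. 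Since $\Phi_g$ is by construction a stationary state, namely the minimizer in \eqref{groundDef} whose existence is guaranteed by Lemma \ref{lem-eu-wjj} under the stated conditions, it solves \eqref{NLEP} with $\mu=\mu_g$. Hence both bracketed expressions vanish identically, giving $\cH_-\Phi_g=0$ and therefore $\Phi_g\in\ns(\cH_-)$, which proves the claim.

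The only real obstacle is careful bookkeeping: one must track that the self-interaction coefficient appearing in $L_j$ is $2\beta_{jj}$, whereas the diagonal of $\cB$ contributes $\beta_{jj}$, so that the subtraction $L_j-\beta_{jj}|\phi_j^g|^2$ leaves exactly the coefficient $\beta_{jj}$ matching the cubic term of the Gross-Pitaevskii stationary equation; any stray factor of two or sign slip would destroy the cancellation. I would also emphasize that the reality of the ground state (established in \cite{Bao2017Mathematical}) is essential here, as it is precisely what permits the reductions $\cD=-\cA$ and $\cC=-\cB$ and hence the clean form $\cH_-=\cA-\cB$ on which the whole computation rests.
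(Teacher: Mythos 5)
Your proposal is correct and matches the paper's approach exactly: the paper's proof is simply ``Just verify $\cH_-\Phi_g=0$ directly,'' and your write-up supplies precisely that verification, with the bookkeeping of the $2\beta_{jj}$ versus $\beta_{jj}$ coefficients and the off-diagonal cancellation to $\Omega/2$ done correctly. Your added remarks on the role of the real-valuedness of $\Phi_g$ and of Lemma \ref{lem-eu-wjj} are accurate and fill in details the paper omits.
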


\begin{proof}
	Just verify $\cH_-\Phi_g=0$ directly and omit the detailed proof here.
\end{proof}

\begin{remark}\label{rem:withJJ}
	From our extensive numerical results, not fully shown here,
	we conjecture that the following property holds for $\Omega\neq0$:
	\begin{center}
		$\cH_+$ is positive definite ~ ~ ~ and ~ ~ ~ $\ns(\cH_-) = \spa \{\Phi_g\}$.
	\end{center}
\end{remark}

Finally, we present the structure of the generalized nullspace of the operator $\mathcal{H}$.

\begin{thm}[\textbf{Generalized nullspace of $\cH$}]\label{thm:withJJ}
	If the above property is true,
	we can obtain the generalized nullspace of $\mathcal H$
	as follows
	\bea
	\ns(\cH^3) = \ns(\cH^2) = \ns(\cH) \oplus \spa \left\{\begin{bsmallmatrix}
		\widehat\Phi_1\\
		\mathbf 0
	\end{bsmallmatrix}\right\},
	\eea
	where $\widehat{\Phi}_1 = \cH_+^{-1}\Phi_g$, and $\ns(\cH) = \spa \left\{\begin{bsmallmatrix}
			\mathbf 0 \\
			\Phi_g
		\end{bsmallmatrix}\right\}.
	$
\end{thm}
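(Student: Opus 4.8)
The plan is to exploit the off-diagonal block structure of $\cH$ and to reduce every claim to properties of the two Hermitian operators $\mathcal{H}_+$ and $\mathcal{H}_-$. Under the conjectured property of Remark~\ref{rem:withJJ}, $\mathcal{H}_+$ is positive definite (hence boundedly invertible) and $\ns(\mathcal{H}_-)=\spa\{\Phi_g\}$; these two facts drive the whole computation. Since $\cH=\begin{bsmallmatrix}\mathcal O & \mathcal{H}_-\\ \mathcal{H}_+ & \mathcal O\end{bsmallmatrix}$, squaring gives the block-diagonal $\cH^2=\diag(\mathcal{H}_-\mathcal{H}_+,\ \mathcal{H}_+\mathcal{H}_-)$, and $\cH^3$ is again off-diagonal with blocks $\mathcal{H}_-\mathcal{H}_+\mathcal{H}_-$ and $\mathcal{H}_+\mathcal{H}_-\mathcal{H}_+$.

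First I would read off $\ns(\cH)$: the equation $\mathcal{H}_+\bbf=\mathbf 0$ forces $\bbf=\mathbf 0$ by invertibility, while $\mathcal{H}_-\bbg=\mathbf 0$ gives $\bbg\in\spa\{\Phi_g\}$, so $\ns(\cH)=\spa\{\begin{bsmallmatrix}\mathbf 0\\ \Phi_g\end{bsmallmatrix}\}$. For $\ns(\cH^2)$, the equation $\mathcal{H}_+\mathcal{H}_-\bbg=\mathbf 0$ collapses to $\mathcal{H}_-\bbg=\mathbf 0$ (invertibility of $\mathcal{H}_+$) and reproduces the same $\bbg$, whereas $\mathcal{H}_-\mathcal{H}_+\bbf=\mathbf 0$ forces $\mathcal{H}_+\bbf\in\ns(\mathcal{H}_-)=\spa\{\Phi_g\}$, i.e. $\bbf=c\,\mathcal{H}_+^{-1}\Phi_g=c\,\widehat{\Phi}_1$. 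This already yields $\ns(\cH^2)=\ns(\cH)\oplus\spa\{\begin{bsmallmatrix}\widehat{\Phi}_1\\ \mathbf 0\end{bsmallmatrix}\}$.

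The heart of the proof is the stabilization $\ns(\cH^3)=\ns(\cH^2)$, i.e. showing that the Jordan chain over the kernel does not lengthen. The inclusion $\ns(\cH^2)\subseteq\ns(\cH^3)$ is automatic, so it suffices to prove the reverse. In the $\cH^3$ system the $\bbf$-block again reduces by invertibility to $\mathcal{H}_-\mathcal{H}_+\bbf=\mathbf 0$ and contributes nothing new, so the only danger lies in the $\bbg$-block $\mathcal{H}_-\mathcal{H}_+\mathcal{H}_-\bbg=\mathbf 0$. Setting $\mathbf w=\mathcal{H}_-\bbg$, this reads $\mathcal{H}_-\mathcal{H}_+\mathbf w=\mathbf 0$, whence $\mathbf w=c\,\widehat{\Phi}_1$ as computed above for $\ns(\cH^2)$, and the question becomes whether $\mathcal{H}_-\bbg=c\,\widehat{\Phi}_1$ is solvable with $c\neq 0$. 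Here I would invoke the Fredholm alternative for the Hermitian operator $\mathcal{H}_-$: its range equals $\ns(\mathcal{H}_-)^{\perp}=\spa\{\Phi_g\}^{\perp}$, so $c\,\widehat{\Phi}_1\in\mathrm{range}(\mathcal{H}_-)$ forces $c\,\lag\widehat{\Phi}_1,\Phi_g\rag=0$. Since $\mathcal{H}_+$ is positive definite, so is $\mathcal{H}_+^{-1}$, and therefore $\lag\widehat{\Phi}_1,\Phi_g\rag=\lag\mathcal{H}_+^{-1}\Phi_g,\Phi_g\rag>0$; hence $c=0$, $\mathcal{H}_-\bbg=\mathbf 0$, and $\bbg\in\spa\{\Phi_g\}$. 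This closes the chain and proves $p=2$.

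The step I expect to be the main obstacle is precisely this termination argument. One must justify the closed-range / Fredholm property of $\mathcal{H}_-$ at the operator level so that the identity $\mathrm{range}(\mathcal{H}_-)=\ns(\mathcal{H}_-)^{\perp}$ is legitimate; the harmonic confinement \eqref{Vpoten} makes $\mathcal{H}_\pm$ Schr\"odinger-type operators with compact resolvent and discrete spectrum, so the range is indeed closed and orthogonal to the one-dimensional kernel. Granting this, the clean positivity $\lag\mathcal{H}_+^{-1}\Phi_g,\Phi_g\rag>0$ furnishes the contradiction that forbids extending the chain to length three, and the asserted decomposition $\ns(\cH^3)=\ns(\cH^2)=\ns(\cH)\oplus\spa\{\begin{bsmallmatrix}\widehat{\Phi}_1\\ \mathbf 0\end{bsmallmatrix}\}$ follows.
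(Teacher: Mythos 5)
Your proof is correct and follows essentially the same route as the paper's: identify $\ns(\cH)$ and $\ns(\cH^2)$ from the off-diagonal block structure and the invertibility of $\cH_+$, then rule out a longer Jordan chain by showing $\cH_-\bbg = c\,\widehat\Phi_1$ forces $c=0$ via orthogonality against $\Phi_g$ and positive definiteness (your $\langle\cH_+^{-1}\Phi_g,\Phi_g\rangle>0$ is the same quantity as the paper's $\langle\cH_+\widehat\Phi_1,\widehat\Phi_1\rangle>0$). The only difference is that the step you flag as the main obstacle --- the closed-range/Fredholm property of $\cH_-$ --- is not actually needed: one only requires the elementary inclusion $\mathrm{range}(\cH_-)\subseteq\ns(\cH_-)^{\perp}$, which is exactly what the paper uses by writing $0=\langle\cH_-\Phi_g,\bbg\rangle=\langle\Phi_g,\cH_-\bbg\rangle$ from Hermitian-ness alone, with no spectral or compact-resolvent argument.
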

\begin{proof}
	It is easy to verify that
$$\ns(\cH) =\{(\bm f,\bm g)^\top|\cH_-\bm g=\mathbf 0, \cH_+\bm f=0\} = \{(\mathbf 0,\bm g)^\top|\bm g\in\ns(\cH_-)\} = \spa \left\{\begin{bsmallmatrix}
			\mathbf 0 \\
			\Phi_g
		\end{bsmallmatrix}\right\}.
	$$
For $\ns(\cH^2)$ and $\ns(\cH^3)$, we can obtain
\beas\ns(\cH^2) &=&\{(\bm f,\bm g)^\top|\cH_+\cH_-\bm g=\mathbf 0, \cH_-\cH_+\bm f=0\} \\[0.2em]
 &=& \{(\bbf,\bbg)^\top|\cH_-\bm g=\mathbf 0,\cH_+\bbf\in\ns(\cH_-)\}\\[0.3em]
 &=& \{(\bbf,\bbg)^\top|\bm g\in \ns(\cH_-),\bbf\in\cH_+^{-1}\ns(\cH_-)\} \\[0.2em]
 &=&\ns(\cH) \oplus \spa \left\{\begin{bsmallmatrix}
		\widehat\Phi_1\\
		\mathbf 0
	\end{bsmallmatrix}\right\},
\eeas
and
\beas\ns(\cH^3) &=&\{(\bm f,\bm g)^\top|\cH_-\cH_+\cH_-\bm g=\mathbf 0, \cH_+\cH_-\cH_+\bm f=0\} \\[0.3em]
&=& \{(\bbf,\bbg)^\top|\cH_-\bm g\in \cH_+^{-1}\ns(\cH_-),\bbf\in\cH_+^{-1}\ns(\cH_-)\}.
\eeas
Obviously, $\ns(\cH^2)\subset \ns(\cH^3)$. Next, we prove that $\ns(\cH^3)\subset \ns(\cH^2)$.
For any $(\bbf,\bbg)^\top\in \ns(\cH^3)$, i.e., $\cH_-\bm g\in \cH_+^{-1}\ns(\cH_-)$, we have $\cH_-\bm g = c\,\widehat\Phi_1$ and
\begin{equation*}
	0=\langle \mathbf{0}, \bbg \rangle = \langle \mathcal H_-\Phi_g, \bbg \rangle
	=\langle \Phi_g,\mathcal H_- \bbg \rangle
	=\langle \cH_+\widehat\Phi_1,c\,\widehat\Phi_1 \rangle=c\langle \cH_+\widehat\Phi_1,\widehat\Phi_1 \rangle.
\end{equation*}
By the positivity of $\cH_{+}$, we have $c=0$ or $\widehat\Phi_1=\mathbf 0$, i.e., $\mathcal H_- \bbg=\mathbf 0$,
which implies immediately that $\ns(\cH^3)\subset \ns(\cH^2)$. Therefore, $\ns(\cH^3)=\ns(\cH^2)$.
\end{proof}



\subsubsection{Without an internal atomic Josephson junction ($\Omega = 0$)}

\vspace{0.5em}
Similar to the preceding scenario, we shall initially delineate the conditions that ensure the existence and uniqueness of the ground state solution of \eqref{groundDef}.

\begin{lem} [\textbf{Existence and uniqueness of the ground state} \cite{BaoCai2com}]\label{lem-eu-wojj} Suppose $V(x) \geq 0$ satisfying $\lim_{|x|\rightarrow \infty}V(x) =\infty$ and at least one of the following conditions holds,\\
	\indent \ \ (i) $d = 1$;\\
	\indent \ (ii) $d = 2$ and $\betauu \geq -C_b/\alpha, \betadd \geq -C_b/(1-\alpha)$, and $\betaud=\betadu\geq -\sqrt{C_b +\alpha\betauu}\sqrt{C_b +(1-\alpha)\betadd}$;\\
	\indent (iii) $d = 3$ and $\begin{bsmallmatrix}\betauu& \betaud\\\betadu &\betadd\end{bsmallmatrix}$ is either positive semi-definite or nonnegative,\\
	there exists a ground state $\Phi_g = (\phiug,\phidg)^\top$ of \eqref{groundDef}. Furthermore, if the matrix $\begin{bsmallmatrix}\betauu& \betaud\\\betadu &\betadd\end{bsmallmatrix}$ is positive
	semi-definite, then the ground
	state $(|\phiug|,|\phidg|)^\top$ is unique.
\end{lem}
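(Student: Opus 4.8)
The plan is to obtain existence by the direct method in the calculus of variations and uniqueness by a convexity argument in the density variables $\rho_j=|\phi_j|^2$, paralleling the treatment of the junction case in Lemma \ref{lem-eu-wjj}. Throughout, observe that with $\Omega=0$ the linear term $\frac{\delta}{2}(\|\phi_1\|^2-\|\phi_2\|^2)=\frac{\delta}{2}(2\alpha-1)$ is constant on $\mathcal S_M^\alpha$, so the only possibly indefinite contribution to $\mathcal E_0$ is the quartic interaction $\frac12\int_{\mathbb R^d}\sum_{j,l}\beta_{jl}|\phi_j|^2|\phi_l|^2\,\dif\bx$.

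First I would establish that $\mathcal E_0$ is bounded below and coercive on $\mathcal S_M^\alpha$, which is exactly where the dimension-dependent hypotheses (i)--(iii) enter. If $(\beta_{jl})$ is positive semi-definite (or, in (iii), entrywise nonnegative) the quartic form is pointwise nonnegative and coercivity is immediate; this is the only route available in $d=3$, because the Gagliardo--Nirenberg bound $\|\phi\|_{L^4}^4\le C\|\phi\|_{L^2}\|\nabla\phi\|_{L^2}^3$ is super-quadratic in the gradient, so a negative interaction cannot be absorbed into the kinetic energy. In $d=1$ the estimate $\|\phi\|_{L^4}^4\le C\|\phi\|_{L^2}^3\|\nabla\phi\|_{L^2}$ is sub-quadratic, hence the negative part is absorbed by an arbitrarily small multiple of the kinetic energy and no sign condition on $\beta$ is needed. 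The delicate case is $d=2$, where $\|\phi\|_{L^4}^4\le C_b^{-1}\|\phi\|_{L^2}^2\|\nabla\phi\|_{L^2}^2$ is exactly quadratic: inserting the fixed masses $\|\phi_1\|^2=\alpha$, $\|\phi_2\|^2=1-\alpha$ and bounding the cross term by $\int|\phi_1|^2|\phi_2|^2\le\|\phi_1\|_{L^4}^2\|\phi_2\|_{L^4}^2$ reduces the lower bound to the nonnegativity of a $2\times 2$ quadratic form in $(\|\nabla\phi_1\|_{L^2},\|\nabla\phi_2\|_{L^2})$ whose defining inequalities are precisely the thresholds $\betauu\ge-C_b/\alpha$, $\betadd\ge-C_b/(1-\alpha)$ and $\betaud\ge-\sqrt{C_b+\alpha\betauu}\sqrt{C_b+(1-\alpha)\betadd}$. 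Matching this sharp constant $C_b$ is the step I expect to be the main obstacle.

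Given coercivity, I would take a minimizing sequence $\{\Phi^{(n)}\}\subset\mathcal S_M^\alpha$, deduce a uniform bound in the energy space $X=\{\Phi:\ \|\nabla\phi_j\|_{L^2}+\|\sqrt{V}\,\phi_j\|_{L^2}<\infty,\ j=1,2\}$, and pass to a weakly convergent subsequence $\Phi^{(n)}\rightharpoonup\Phi^\star$. The confining hypothesis $\lim_{|x|\to\infty}V(x)=\infty$ is essential: it makes the embedding $X\hookrightarrow L^2(\mathbb R^d)\cap L^4(\mathbb R^d)$ compact and thereby restores the compactness otherwise lost on the unbounded domain. Strong $L^2$-convergence preserves the constraints $\|\phi_1^\star\|^2=\alpha$, $\|\phi_2^\star\|^2=1-\alpha$, so $\Phi^\star\in\mathcal S_M^\alpha$; weak lower semicontinuity of the convex kinetic and potential terms, together with strong $L^4$-convergence of the quartic term, then gives $\mathcal E_0(\Phi^\star)\le\liminf_n\mathcal E_0(\Phi^{(n)})$, identifying $\Phi^\star$ as a ground state.

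For uniqueness when $(\beta_{jl})$ is positive semi-definite, I would first note that replacing each component by its modulus does not increase the energy, since $\big|\nabla|\phi_j|\big|\le|\nabla\phi_j|$ and every other term depends only on $|\phi_j|$; hence one may restrict to nonnegative real minimizers and set $\rho_j=\phi_j^2$. In these variables the potential and linear terms are linear in $(\rho_1,\rho_2)$, the interaction $\frac12\int\sum_{j,l}\beta_{jl}\rho_j\rho_l$ is convex exactly by positive semi-definiteness, and the kinetic energy $\frac12\int|\nabla\sqrt{\rho_j}|^2=\frac18\int|\nabla\rho_j|^2/\rho_j$ is convex because $(\rho,p)\mapsto|p|^2/\rho$ is jointly convex for $\rho>0$. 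Thus $\mathcal E_0$ is convex on the convex set $\{\rho_j\ge0,\ \int\rho_1=\alpha,\ \int\rho_2=1-\alpha\}$; if two ground states produced densities $\rho$ and $\tilde\rho$, their midpoint would again be a minimizer, and the strict convexity of $(\rho,p)\mapsto|p|^2/\rho$ forces $\nabla\log\rho_j=\nabla\log\tilde\rho_j$, whence $\rho_j=c_j\tilde\rho_j$; the fixed masses give $c_j=1$, and therefore $(|\phiug|,|\phidg|)=(\sqrt{\rho_1},\sqrt{\rho_2})$ is unique. The subtle point in this last step is justifying the convexity identities on the nodal set where $\rho_j$ may vanish.
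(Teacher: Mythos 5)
You should know at the outset that the paper contains no proof of this lemma: it is imported, with citation, from \cite{BaoCai2com}, so there is no in-paper argument to compare against. Your proposal is in effect a reconstruction of the proof in that reference, and it follows the same strategy: the direct method for existence (coercivity via dimension-dependent Gagliardo--Nirenberg absorption, compactness via the confining potential $V\to\infty$), and convexity of the energy in the density variables $(\rho_1,\rho_2)=(|\phiu|^2,|\phid|^2)$ for uniqueness when the interaction matrix is positive semi-definite. The outline is correct, including your (correct) reading that the minimization meant here is \eqref{groundDef2} over $\mathcal S_M^\alpha$ rather than \eqref{groundDef}, and that the $\delta$-term is constant on the constraint set when $\Omega=0$.

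Two refinements would be needed to make it airtight. First, in $d=2$ your quadratic-form reduction does not reproduce the stated thresholds ``precisely'': Cauchy--Schwarz on the cross term together with $\|\phi_j\|_{L^4}^4\le C_b^{-1}\|\phi_j\|_{L^2}^2\|\nabla\phi_j\|_{L^2}^2$ shows the energy is bounded below whenever $\betaud\ge -\tfrac{1}{\sqrt{\alpha(1-\alpha)}}\sqrt{C_b+\alpha\betauu}\,\sqrt{C_b+(1-\alpha)\betadd}$, which is a weaker requirement than (i.e., implied by) the hypothesis in the lemma, since $1/\sqrt{\alpha(1-\alpha)}\ge 2$. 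So the stated hypothesis is more than sufficient for your argument, but the claim of exact matching should be dropped. Second, the equality analysis in the uniqueness step --- extracting $\nabla\log\rho_j=\nabla\log\tilde\rho_j$ and hence $\rho_j=c_j\tilde\rho_j$ --- requires strict positivity of the densities, which you flagged but did not resolve. This gap closes in a standard way: for $\Omega=0$ a nonnegative minimizing component $|\phi_j^g|$ satisfies a linear Schr\"odinger equation with locally bounded potential $V\pm\tfrac{\delta}{2}+\sum_l\beta_{jl}|\phi_l^g|^2-\mu_j$ (there is no Rabi coupling term to spoil this), so elliptic regularity and the strong maximum principle give $|\phi_j^g|>0$ everywhere, and the nodal set is empty.
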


\begin{thm}[\textbf{Nullspace of $\cH_-$ for system without JJ ($\Omega=0$)}]\label{TCC}
	Under conditions of Lemma \ref{lem-eu-wojj}, we have the following property for nullspace of $\mathcal H_-$
	$$\spa \{\Phi_1,\Phi_2\}\subset \ns(\cH_-),$$
	where $\Phi_1 = \Phi_g$, $\Phi_2 = (c_{1}\phiug,c_{2}\phidg)^\top$, and
	$(c_{1},c_{2}) = \big(-(\frac{1-\alpha}{\alpha})^{1/2}, (\frac{\alpha}{1-\alpha})^{1/2}\big)$.
\end{thm}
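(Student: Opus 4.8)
The plan is to establish the inclusion by the easy direction only: since $\ns(\cH_-)$ is a linear subspace, it suffices to verify directly that $\cH_-\Phi_1=\mathbf 0$ and $\cH_-\Phi_2=\mathbf 0$, whence $\spa\{\Phi_1,\Phi_2\}\subset\ns(\cH_-)$. Everything hinges on one structural observation I would record at the outset. By the existence--uniqueness result and the arguments recalled from \cite{BaoCai2com,Bao2017Mathematical}, the ground state $\Phi_g=(\phiug,\phidg)^\top$ is real-valued, so $\overline{\phiug}=\phiug$ and $\overline{\phidg}=\phidg$; consequently every entry of $\cA_\alpha$ and $\cB_\alpha$ is real. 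I would also fix notation by identifying the ground-state components $\phiua=\phiug$ and $\phida=\phidg$ throughout.

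First I would write out $\cH_-=\cA_\alpha-\cB_\alpha$. With the components real, the $(1,2)$ entries of $\cA_\alpha$ and $\cB_\alpha$ are both equal to $\betaud\phiug\phidg$, and likewise the $(2,1)$ entries coincide; the off-diagonal couplings therefore cancel exactly and $\cH_-$ collapses to the decoupled diagonal operator
\bes
\cH_-=\diag(M_1,M_2),\qquad M_1:=L_1^\alpha-\betauu(\phiug)^2,\quad M_2:=L_2^\alpha-\betadd(\phidg)^2.
\ees
Inserting $|\phiug|^2=(\phiug)^2$ into the definition of $L_1^\alpha$ gives
\bes
M_1=-\frac12\na+V(\bx)+\frac{\delta}{2}+\betauu(\phiug)^2+\betaud(\phidg)^2-\muua,
\ees
which is exactly the scalar Gross--Pitaevskii operator in the first line of \eqref{NLEP2} evaluated at the ground state; hence $M_1\phiug=0$. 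The same computation on the second block yields $M_2\phidg=0$.

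The conclusion is then immediate: because $\cH_-$ is diagonal, for arbitrary scalars $a,b$ one has $\cH_-(a\phiug,b\phidg)^\top=(a\,M_1\phiug,\;b\,M_2\phidg)^\top=\mathbf 0$. Choosing $(a,b)=(1,1)$ gives $\cH_-\Phi_1=\mathbf 0$ and choosing $(a,b)=(c_1,c_2)$ gives $\cH_-\Phi_2=\mathbf 0$, which proves the claimed inclusion. I would add a remark that the inclusion in fact holds for \emph{every} pair $(c_1,c_2)$; the specific values $c_1=-\big(\frac{1-\alpha}{\alpha}\big)^{1/2}$, $c_2=\big(\frac{\alpha}{1-\alpha}\big)^{1/2}$ are singled out only so that $\Phi_2$ is orthogonal to $\Phi_1$, since $\langle\Phi_1,\Phi_2\rangle=c_1\|\phiug\|^2+c_2\|\phidg\|^2=c_1\alpha+c_2(1-\alpha)=0$ by the component masses $\|\phiug\|^2=\alpha$ and $\|\phidg\|^2=1-\alpha$; this normalization is what the ensuing generalized-nullspace analysis will use. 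The main (and essentially only) point requiring care is the exact cancellation of the off-diagonal blocks, which relies entirely on the reality of $\Phi_g$; once $\cH_-$ decouples, matching its two diagonal entries to the two scalar equations in \eqref{NLEP2} is routine.
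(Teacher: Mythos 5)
Your proof is correct and follows essentially the same route as the paper's: you reduce $\cH_-=\cA_\alpha-\cB_\alpha$ to a diagonal operator (the off-diagonal blocks cancelling by reality of $\Phi_g$), identify each diagonal block with the corresponding scalar operator in \eqref{NLEP2}, and conclude that both $\Phi_1$ and $\Phi_2$ are annihilated, with the specific $(c_1,c_2)$ serving only to make $\Phi_2$ orthonormal to $\Phi_1$. The only difference is that you spell out the cancellation and the ``any $(a,b)$ works'' observation explicitly, whereas the paper states the diagonal form and the conclusion more tersely.
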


\begin{proof}
	Noticing the fact that $\cH_-\Phi_1 =  \cH_-\Phi_g = 0$.
	Actually, the operator $\mathcal H_{-}$ could be simplified below
	\beas
	\mathcal H_- =
	\begin{bmatrix} -\frac{1}{2} \nabla^2  + V(\bx) + \frac{\delta}{2} +  \betauu |\phiua|^2 + \, \betaud |\phida|^2 - \muu & 0 \\
		0 & -\frac{1}{2} \nabla^2  + V(\bx) - \frac{\delta}{2} + \, \betadu |\phiua|^2 + 2\betadd |\phida|^2 - \mud
	\end{bmatrix}.
	\eeas
	
	Then, according to Eqn. \eqref{NLEP2},
	we obtain that $\Phi_2$ lie in the nullspace of $\cH_-$, i.e., $\cH_-\Phi_2 = 0$.
	It is easy to check that
	$\lag\Phi_1,\Phi_2\rag = 0$ and $\|\Phi_2\| = 1$ with $c_{1}= -(\frac{1-\alpha}{\alpha})^{1/2},~c_{2} = (\frac{\alpha}{1-\alpha})^{1/2}$.
\end{proof}

\begin{remark}\label{rem:withoutJJ}
	From our extensive numerical results, not fully shown here,
	we conjecture that the following property holds
	\begin{center}
		$\cH_+$ is positive definite ~ ~ ~ and ~ ~ ~ $\ns(\cH_-) = \spa \{\Phi_1,\Phi_2\}$.
	\end{center}
\end{remark}

At last, we shall give the structure of the generalized nullspace of the operator $\mathcal{H}$.

\begin{thm}[\textbf{Generalized nullspace of $\cH$}]\label{thm:withoutJJ}
	If the above property is true,
	we can obtain the generalized nullspace of $\mathcal H$
	as follows
	\bea
	\ns(\cH^3) = \ns(\cH^2) = \ns(\cH) \oplus \spa \left\{\begin{bsmallmatrix}
		\widehat\Phi_1\\
		\mathbf 0
	\end{bsmallmatrix},
	\begin{bsmallmatrix}
		\widehat\Phi_2\\
		\mathbf 0 \\
	\end{bsmallmatrix}\right\},
	\eea
	where $\widehat{\Phi}_j = \cH_+^{-1}\Phi_j, ~j=1,2$ and  $\ns(\cH) = \spa \left\{\begin{bsmallmatrix}
			\mathbf 0 \\
			\Phi_1
		\end{bsmallmatrix},
		\begin{bsmallmatrix}
			\mathbf 0 \\
			\Phi_2
		\end{bsmallmatrix}\right\}$.
	
\end{thm}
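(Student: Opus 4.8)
The plan is to imitate, step for step, the proof of Theorem \ref{thm:withJJ}, exploiting the block structure
\[
\cH=\begin{bmatrix}\mathcal O&\cH_-\\ \cH_+&\mathcal O\end{bmatrix},\qquad
\cH^2=\begin{bmatrix}\cH_-\cH_+&\mathcal O\\ \mathcal O&\cH_+\cH_-\end{bmatrix},\qquad
\cH^3=\begin{bmatrix}\mathcal O&\cH_-\cH_+\cH_-\\ \cH_+\cH_-\cH_+&\mathcal O\end{bmatrix},
\]
together with the assumed property of Remark \ref{rem:withoutJJ} that $\cH_+$ is positive definite (hence injective) and $\ns(\cH_-)=\spa\{\Phi_1,\Phi_2\}$. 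First I would read off $\ns(\cH)$: the equation $\cH(\bbf,\bbg)^\top=\mathbf 0$ forces $\cH_+\bbf=\mathbf 0$ and $\cH_-\bbg=\mathbf 0$, so positivity gives $\bbf=\mathbf 0$ and $\bbg\in\ns(\cH_-)$, yielding $\ns(\cH)=\spa\{(\mathbf 0,\Phi_1)^\top,(\mathbf 0,\Phi_2)^\top\}$. Next, from the block form of $\cH^2$ the two scalar conditions decouple: $\cH_+\cH_-\bbg=\mathbf 0$ gives $\cH_-\bbg=\mathbf 0$ by injectivity of $\cH_+$, while $\cH_-\cH_+\bbf=\mathbf 0$ means $\cH_+\bbf\in\ns(\cH_-)$, i.e. $\bbf\in\cH_+^{-1}\ns(\cH_-)=\spa\{\widehat\Phi_1,\widehat\Phi_2\}$. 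This produces exactly $\ns(\cH^2)=\ns(\cH)\oplus\spa\{(\widehat\Phi_1,\mathbf 0)^\top,(\widehat\Phi_2,\mathbf 0)^\top\}$.

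It then remains to establish $\ns(\cH^3)=\ns(\cH^2)$. The inclusion $\ns(\cH^2)\subset\ns(\cH^3)$ is automatic. For the reverse, take $(\bbf,\bbg)^\top\in\ns(\cH^3)$. The first block row of $\cH^3$ gives $\cH_-\cH_+\cH_-\bbg=\mathbf 0$, hence $\cH_+\cH_-\bbg\in\ns(\cH_-)$ and therefore $\cH_-\bbg\in\cH_+^{-1}\ns(\cH_-)=\spa\{\widehat\Phi_1,\widehat\Phi_2\}$; the second block row gives, via injectivity of $\cH_+$ as before, $\bbf\in\spa\{\widehat\Phi_1,\widehat\Phi_2\}$. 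Thus the $\bbf$-part already matches $\ns(\cH^2)$, and the only way $(\bbf,\bbg)^\top$ could fail to lie in $\ns(\cH^2)$ is if $\cH_-\bbg=c_1\widehat\Phi_1+c_2\widehat\Phi_2$ with $(c_1,c_2)\neq(0,0)$, which I must rule out.

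The main obstacle, and the genuine difference from Theorem \ref{thm:withJJ}, is that here $\ns(\cH_-)$ is two-dimensional, so the single scalar identity used in the $\Omega\neq0$ case must be upgraded to a $2\times2$ linear system. Testing $\cH_-\bbg$ against each basis vector $\Phi_k=\cH_+\widehat\Phi_k$ and using self-adjointness of $\cH_-$ together with $\cH_-\Phi_k=\mathbf 0$, I would obtain, for $k=1,2$,
\[
0=\langle\cH_-\Phi_k,\bbg\rangle=\langle\Phi_k,\cH_-\bbg\rangle
 =\sum_{j=1}^{2}c_j\,\langle\cH_+\widehat\Phi_k,\widehat\Phi_j\rangle,
\]
which is the homogeneous system $G\,(c_1,c_2)^\top=\mathbf 0$ whose coefficient matrix $G=\big(\langle\cH_+\widehat\Phi_k,\widehat\Phi_j\rangle\big)_{k,j}$ is the Gram matrix of $\{\widehat\Phi_1,\widehat\Phi_2\}$ in the inner product induced by the positive operator $\cH_+$. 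Since $\Phi_1,\Phi_2$ are linearly independent (indeed orthogonal, as established in the preceding nullspace theorem for $\Omega=0$) and $\cH_+^{-1}$ is invertible, the vectors $\widehat\Phi_1=\cH_+^{-1}\Phi_1$ and $\widehat\Phi_2=\cH_+^{-1}\Phi_2$ are linearly independent, so $G$ is positive definite and hence nonsingular. This forces $c_1=c_2=0$, i.e. $\cH_-\bbg=\mathbf 0$, giving $\ns(\cH^3)\subset\ns(\cH^2)$ and completing the proof. The only point requiring genuine care is precisely the linear independence of $\widehat\Phi_1,\widehat\Phi_2$, which is what guarantees invertibility of the Gram matrix.
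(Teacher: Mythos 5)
Your proof is correct, and its skeleton coincides with the paper's: the block forms of $\cH$, $\cH^2$, $\cH^3$, the characterizations of $\ns(\cH)$ and $\ns(\cH^2)$ via injectivity of $\cH_+$, and the reduction of $\ns(\cH^3)\subset\ns(\cH^2)$ to showing $\cH_-\bbg=\mathbf 0$ are all the same. Where you diverge is the finishing step. You test $\cH_-\bbg=c_1\widehat\Phi_1+c_2\widehat\Phi_2$ against each basis vector $\Phi_k$ separately, producing a homogeneous $2\times 2$ system whose coefficient matrix is the $\cH_+$-Gram matrix of $\{\widehat\Phi_1,\widehat\Phi_2\}$; to conclude $c_1=c_2=0$ you then need this matrix to be nonsingular, i.e.\ the linear independence of $\widehat\Phi_1,\widehat\Phi_2$, which you correctly reduce to the independence (indeed orthogonality) of $\Phi_1,\Phi_2$ and the invertibility of $\cH_+$ — so your argument does close. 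The paper instead tests against the single \emph{matched} combination $c_1\Phi_1+c_2\Phi_2$, with the same coefficients as in the expansion of $\cH_-\bbg$: self-adjointness of $\cH_-$ and $\cH_-\Phi_k=\mathbf 0$ give $0=\langle \cH_+(c_1\widehat\Phi_1+c_2\widehat\Phi_2),\,c_1\widehat\Phi_1+c_2\widehat\Phi_2\rangle$, and positive definiteness of $\cH_+$ then yields $c_1\widehat\Phi_1+c_2\widehat\Phi_2=\mathbf 0$ directly — which is exactly $\cH_-\bbg=\mathbf 0$, the only thing needed; one never needs $c_1=c_2=0$ individually, nor any independence or Gram-matrix considerations. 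So the two-dimensional nullspace does not actually force the ``upgrade to a $2\times 2$ system'' you describe: the scalar trick of Theorem \ref{thm:withJJ} generalizes verbatim once one tests with the matched combination. Both arguments are valid; the paper's is the more economical, while yours carries the small extra burden of justifying invertibility of the Gram matrix.
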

\begin{proof}
Similar as Theorem \ref{thm:withJJ}, we can prove
$\ns(\cH) = \spa \left\{\begin{bsmallmatrix}
			\mathbf 0 \\
			\Phi_1
		\end{bsmallmatrix},
		\begin{bsmallmatrix}
			\mathbf 0 \\
			\Phi_2
		\end{bsmallmatrix}\right\}$,
\beas
\ns(\cH^2) &=& \{(\bbf,\bbg)^\top|\bm g\in \ns(\cH_-),\bbf\in\cH_+^{-1}\ns(\cH_-)\} \\
&=&\ns(\cH) \oplus \spa
\left\{\begin{bsmallmatrix}
		\widehat\Phi_1\\
		\mathbf 0
	\end{bsmallmatrix},
	\begin{bsmallmatrix}
		\widehat\Phi_2\\
		\mathbf 0
	\end{bsmallmatrix}\right\}.
\eeas
Then, for $\ns(\cH^3)$
\beas\ns(\cH^3) &=&\{(\bm f,\bm g)^\top|\cH_-\cH_+\cH_-\bm g=\mathbf 0, \cH_+\cH_-\cH_+\bm f=0\} \\
&=& \{(\bbf,\bbg)^\top|\cH_-\bm g\in \cH_+^{-1}\ns(\cH_-),\bbf\in\cH_+^{-1}\ns(\cH_-)\}.
\eeas
\vspace{-0.2em}
Obviously, $\ns(\cH^2)\subset \ns(\cH^3)$. Then we prove that $\ns(\cH^3)\subset \ns(\cH^2)$. For any $(\bbf,\bbg)^\top\in \ns(\cH^3)$, i.e., $\cH_-\bm g\in \cH_+^{-1}\ns(\cH_-)$, we have $\cH_-\bm g\in \cH_+^{-1}\ns(\cH_-)$, then $\cH_-\bm g = c_1\widehat\Phi_1+c_2\widehat\Phi_2$, and
\beas
	0&=&\langle \mathbf{0}, \bbg \rangle = \langle \mathcal H_-(c_1\Phi_1+c_2\Phi_2), \bbg \rangle\\
	&=&\langle c_1\Phi_1+c_2\Phi_2,\mathcal H_- \bbg \rangle\\
	&=&\langle c_1\Phi_1+c_2\Phi_2,c_1\widehat\Phi_1+c_2\widehat\Phi_2 \rangle \\
	&=& \langle \cH_+(c_1\widehat\Phi_1+c_2\widehat\Phi_2),c_1\widehat\Phi_1+c_2\widehat\Phi_2 \rangle.
\eeas
Combing the positivity of $\cH_+$, we have $c_1\widehat\Phi_1+c_2\widehat\Phi_2=\mathbf 0$, i.e., $\mathcal H_- \bbg=\mathbf 0$.
That is to say, $\ns(\cH^3) \subset \ns(\cH^2)$, and the proof is completed.
\end{proof}
The convergence of the non-zero eigenvalues depends heavily on the approximation accuracy of the generalized nullspace that is associated with $\cH$ \cite{LiWangZhang}.
Based on Theorems \ref{thm:withJJ} and \ref{thm:withoutJJ},  we obtain the generalized nullspace of $\cH$, thus essentially improving the convergence and efficiency.

\section{Numerical method}
\label{Numer_alg}

In this section, we will propose an efficient and spectrally accurate numerical method to solve the BdG equations \eqref{HpHm-eq}.
Due to the presence of external trapping potential $V(\bx)$, the ground states $\Phi_g(\bx)$
and eigenfunctions $(\bu,\bv)$ are all smooth and fast-decaying. Therefore, it is reasonable
to truncate the whole space $\mathbb{R}^d$
into a large enough bounded domain $\mathcal D\subset \mathbb R^d$ with periodic boundary conditions such that the truncation error is negligible.
Since all related functions are smooth,  the Fourier pseudo-spectral (FS) method stands out as the best candidate for spatial discretization \cite{Bao2017Mathematical,Bao2007mass,Tang2022Spectrally}
due to its simplicity, spectral accuracy and great efficiency.
The computation domain is usually chosen as a rectangle ${\mathcal D}_L:=[-L, L]^{d}$ and discretized uniformly with $N$ grid points in each spatial direction.


\subsection{Spatial discretization by Fourier spectral method}
Provided that the stationary states $\Phi_g$ and all chemical potentials are precomputed with a very fine mesh by the PCG method \cite{ATZ-CiCP-PCG}, such that the numerical accuracy approaches machine precision.
For simplicity, we choose to illustrate the spatial discretization for the 1D case, and extensions to higher dimensions are omitted here.
We choose the computational domain ${\mathcal D}_L:=[-L,L]$ and discretize it uniformly with mesh size $h_x=\fl{2L}{N}$ where $N$ is a positive even integer. Define the grid point set as $\mathcal{T}_{N}=\{(-N/2,\cdots,N/2)h_x\}$
and introduce the plane wave basis as
\bes
W_{k}(x)=e^{\im \mu_k (x+L)},\quad  \mbox{for} ~ -N/2 \leq k \leq N/2-1,
\ees
with $\mu_k = \pi  k/L $.
Define $F_{n}:=F(x_n)$ ($F = \phi^g_j, u_j, v_j, V$ etc.) as function value at grid point $x_n\in\mathcal{T}_{N}$ and  $\mathbf F = (F_1, F_2,\ldots, F_N)$ as the corresponding discrete vector.
The FS approximations of $F$, denoted by ${\widetilde F}$, and its Laplacian $\nabla^2 F$ read as follows
\bea
F(x) \approx \widetilde{F}(x):=\sum_{k=-M/2}^{M/2-1}\widehat{\mathbf F}_{k}\; W_{k}(x),
\quad (\nabla^2 F)(x) \approx (\nabla^2 \widetilde F)(x)\! =\sum_{k=-N/2}^{N/2-1}-\mu_k^2~\widehat{\mathbf F}_{k}\; W_{k}(x),
\eea
where $\widehat{\mathbf F}_{k}$, the discrete Fourier transform of vector $\mathbf F \in \mathbb C^{N}$, is computed as
\begin{equation}
\label{eq:FT_3}
 \widehat{\mathbf F}_{k}= \frac{1}{N} \sum_{n=0}^{N-1} F_{n}\overline{W}_{k}(x_n)
 = \frac{1}{N} \sum_{n=0}^{N-1} F_{n} ~e^{\frac{-\im 2\pi n k}{N}}, \quad \quad    -N/2 \leq k \leq N/2-1,
  \end{equation} and is accelerated by a discrete Fast Fourier Transform (FFT) within  $O(N \ln(N))$ float operations. The numerical approximation of the Laplacian operator $\nabla^2$, denoted as $[\![\nabla^2]\!]$, corresponds to a dense matrix \cite{SpectralBkShen,Tang2022Spectrally,Zhang2021}.
 We map the function's pointwise multiplication $F_{n} G_{n}$ as a matrix-vector product $[\![F]\!] {\mathbf G}$, that is,
  \bea
\left( [\![F]\!] {\mathbf G} \right)_{n} := F_{n}G_{n} \quad \Longrightarrow \quad [\![F]\!] = \diag(\mathbf F).
 \eea
The operators $L_1,L_2$ in \eqref{def_B} is then discretized as follows
\beas
{[\![L_1]\!]} &=& -\frac{1}{2} [\![\nabla^2]\!]  + [\![V]\!] + \frac{\delta}{2} + 2\betauu [\![|\phiug|^2]\!] + \, \betaud [\![|\phidg|^2]\!] - \mu_g,\\
{[\![L_2]\!]} &=& -\frac{1}{2} [\![\nabla^2]\!]  + [\![V]\!] - \frac{\delta}{2} + \, \betadu [\![|\phiug|^2]\!] + 2\betadd [\![|\phidg|^2]\!] - \mu_g.
\eeas
Therefore, operators $\cA$ and $\cB$ are mapped into matrices $\mathbf A$ and $\mathbf B$ in the following way
\bea
\mathbf A := \begin{bmatrix}
{[\![L_1]\!]} & \betaud{[\![\phiug{\phidg}]\!]} + \frac{\Omega}{2} \\[0.4em]
\betadu{[\![{\phiug}{\phidg}]\!]} + \frac{\Omega}{2} & {[\![L_2]\!]}
\end{bmatrix},\quad \mathbf B := \begin{bmatrix}
\betauu[\![|\phiug|^2]\!] & \betaud[\![{\phiug}{\phidg}]\!]\\
\betadu[\![{\phiug}{\phidg}]\!] & \betadd[\![|\phidg|^2]\!]
\end{bmatrix}.
\eea

Thanks to the Fourier spectral discretization, both matrices $\mathbf A$ and $\mathbf B$ are symmetric, thus keeping their continuous operators' Hermitian property.
Setting $\bm{u}_N = (\bm{u}_1^N;\bm{u}_2^N)$ and $\bm{v}_N = (\bm{v}_1^N;\bm{v}_2^N)$,
the BdG equations \eqref{BdG_new} with constaint \eqref{constrain} are then discretized into a linear eigenvalue problem
\begin{gather}
\label{BdG-d}
\begin{bmatrix}
\mathbf{A}&\mathbf{B} \\
-\mathbf{B}&-\mathbf{A}
\end{bmatrix}
\begin{bmatrix}
\bm{u}_N \\
\bm{v}_N
\end{bmatrix}
=\omega_N\begin{bmatrix}
\bm{u}_N\\
\bm{v}_N
\end{bmatrix},\quad h_x\Big(\| \bm u_N \|_{l^2}^2 - \| \bm v_N \|_{l^2}^2\Big) = 1.
\end{gather}
With a similar change of variables, i.e., $\bm{f}_N = \frac{1}{2}(\bm{u}_N+\bm{v}_N),~\bm{g}_N = \frac{1}{2}(\bm{u}_N-\bm{v}_N)$, the above equation is rewritten into a linear response eigenvalue problem
\begin{gather}
\label{lrep-d}
\begin{bmatrix}
  \mathbf{H}_{+}&{\mathbf O} \\
{\mathbf O}&\mathbf{H}_{-}
\end{bmatrix}
\begin{bmatrix}
\bm{f}_N \\
\bm{g}_N
\end{bmatrix}
=\omega_N\begin{bmatrix}
  \mathbf O & \mathbf I \\
  \mathbf I & \mathbf O
\end{bmatrix}
\begin{bmatrix}
\bm{f}_N\\
\bm{g}_N
\end{bmatrix},
\end{gather}
where $\mathbf{H}_{+} = \mathbf{A}+\mathbf{B}$, $\mathbf{H}_{-} = \mathbf{A}-\mathbf{B}$. The above discrete dense eigensystem is solved using the recently developed BiOrthogonal Structure Preserving algorithm (BOSP for short) \cite{LiWangZhang}, and details are illustrated in the coming subsection.


\subsection{A bi-orthogonal structure-preserving method}\label{subsec:LREP-biorth}

As is known that ARPACK is a standard library to compute the discrete eigenvalue and eigenvector problem,
and it has been successfully adapted to BdG equations of dipolar BEC \cite{ARPACK,Tang2022Spectrally}.
However, a similar adaptation attempt using ARPACK is not as effective, and sometimes it does not even converge within a reasonable time,
because the generalized nullspace is much larger and more complicated, not to even mention the large-scale dense eigensystem for three-dimensional problems.
It is worth pointing out that the eigenspaces associated with eigenvalues of different magnitudes are biorthogonal to each other,
and such biorthogonality property shall be taken into account in the eigensolver design \cite{Bai2012Minimization,Bai2013Minimization,LiWangZhang} so as to achieve better performance.
The structure of generalized nullspace and biorthogonality, well described in Section \ref{sec:BdG_Prop} on the continuous level,
are assessed numerically by the linear algebra package (LAPACK)  and a modified Gram-Schmidt biorthogonal algorithm.

Based on the recently developed linear response solver BOSP \cite{LiWangZhang},
by combining the Fourier spectral method for spatial approximation and making use of the specific generalized nullspace structure (Section \ref{sec:rootspace}),
we propose an efficient iterative subspace eigensolver and provide a friendly interface for matrix-vector product evaluation.
Since there is no explicit matrix storage and the matrix-vector product is implemented via FFT within almost optimal
$O(N_{\rm t}\log N_{\rm t})$ operations ($N_{\rm t}:=N^d$ is the total number of grid points),
the heavy memory burden is much more alleviated and the computational efficiency is guaranteed to a large extent.
Therefore, it provides a feasible solution to large-scale problems, especially for such a densely populated eigensystem.
Thanks to the Fourier spectral method, our method can achieve spectral accuracy for both eigenvalues and eigenvectors
as long as the accuracy tolerance of BOSP is chosen sufficiently small.

As pointed out earlier, the matrix-vector product evaluation is the most time-consuming part. While in this article, the matrix-vector product
\begin{equation}
   \label{sol-dim}
   \mathbf{H}_{+}\bm f_N,\quad \mathbf{H}_{-}\bm g_N,\quad \mbox{for} \quad \bm f_N, \bm g_N \in  {\mathbb R}^{2N_{\rm t}},
\end{equation}
can be realized by two pairs of FFT/iFFT plus some function multiplication in the physical space.
The overall computational costs to compute the first $n_{\rm e}\in \mathbb Z^{+}$ eigenpairs amounts to
\beas
\mathcal{O}(n_{\rm e})\times \big(\mathcal{O}(N_{\rm t} \log(N_{\rm t})) + \mathcal{O}(N^d)\big) + \mathcal{O}(n_{\rm e}^3)
=  \mathcal{O}(n_{\rm e}~N_{\rm t}\log(N_{\rm t}))  + \mathcal{O}(n_{\rm e}^3).
\eeas
When the degree of freedom is much larger than the number of eigenvalues, i.e., $ N_{\rm t} \gg n_{\rm e}$,
the computational costs are approximately $\mathcal{O}(N_{\rm t}\log(N_{\rm t}))$ and will be verified numerically in the next section.

\subsection{Stability analysis and error estimates}
To demonstrate the practicality of our solver,  we will present a rigorous stability analysis for the eigenfunctions in this subsection.
Since the density and eigenfunction both decay fast enough in space,
it is reasonable to assume that ${q}(\bx)$ (${q} = \phi^g_j, u_j, v_j$, etc.) is numerically compactly supported
in a bounded domain $\Omega\subsetneq\mathbb{R}^d$, that is,  $\mathrm{supp}\{{q}\} \subsetneq \Omega$. We introduce $2L$-periodic Sobolev space $H^m_p(\Omega)
\subset H^m(\Omega) \ \ (m\geq1)$ with $\Omega = \mathcal{D}_L$, and the semi-norm and norm as follows, respectively,
$$|{q}|_{m} := \left(\sum\nolimits_{|\bm{\alpha}|=m} \|\partial^{\bm{\alpha}} {q} \|^2\right)^{1/2},
\quad  \|{q}\|_{m} := \left(\sum\nolimits_{ |\bm{\alpha}|\leq m} \|\partial^{\bm{\alpha}} {q} \|^2\right)^{1/2},$$
with index $\bm\alpha=(\alpha_1,\ldots, \alpha_d)\in\mathbb{Z}^d$, $|\bm\alpha|=\sum_{j=1}^d\alpha_j$, $\partial^{\bm\alpha}=\partial_{x_1}^{\alpha_1}\cdots\partial_{x_d}^{\alpha_d}$ and $\|\cdot\|$ being the $L^2$ norm.
For vector-valued function $\bm {q} = ({q}_{1},{q}_{2})^\top$, we define $|\bm {q}|_{m} := \sqrt{\sum_{j}|{q}_j|_{m}^2}$,~ $\|\bm {q}\|_{m} := \sqrt{\sum_{j}\|{q}_j\|_{m}^2}$,~and $\|\bm {q}\| :=\sqrt{\sum_{j}\|{q}
_j\|^2}$.
We use $A\lesssim B$ to denote $A \leq cB$ where the
constant $c>0$ is independent of $N$. Then we first introduce the stability analysis as follows:

\begin{thm}[\textbf{Stability Analysis}]\label{thm:stability}
 For any $\bm{{q}}(\bx)=({q}_1(\bx),{q}_2(\bx))^\top \in (H_p^{m}(\Omega))^2$  with $m> d/2 + 2$, and its Fourier pseudo-spectral approximations $\widetilde{\bm{{q}}}(\bx)=\big(\widetilde{q}_1(\bx),\widetilde{q}_2(\bx)\big)^\top$, we have the following error estimate
\bea
\|\mathcal{Q} \bm{{q}}- \mathcal{Q}\widetilde{\bm{{q}}}\| & \lesssim & N^{-(m - 2)},
\eea
where $\mathcal{Q} = \mathcal{A},\mathcal{B}$, and $\bm{{q}}=\bm{u},\bm{v}$.
\end{thm}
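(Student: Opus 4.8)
The plan is to use linearity of the operators to reduce everything to an interpolation error. Since $\mathcal{Q}$ is linear, I write $\mathcal{Q}\bm{q}-\mathcal{Q}\widetilde{\bm{q}}=\mathcal{Q}\bm{e}$ with $\bm{e}:=\bm{q}-\widetilde{\bm{q}}=(e_1,e_2)^\top$ the pseudo-spectral interpolation error, so the task becomes bounding $\|\mathcal{Q}\bm{e}\|$. The single analytical ingredient I would invoke is the classical trigonometric interpolation estimate on $\Omega=\mathcal{D}_L$: for $w\in H_p^m(\Omega)$ with $m>d/2$ and any $0\le s\le m$, one has $\|w-\widetilde{w}\|_{s}\lesssim N^{s-m}\|w\|_{m}$, the constant being independent of $N$. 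The regularity threshold $m>d/2$ is what controls the aliasing error and makes the interpolant meaningful pointwise, while the stronger hypothesis $m>d/2+2$ ensures this bound still applies at the level $s=2$ (equivalently $m-2>d/2$), which is precisely the order of differentiation appearing in $\mathcal{A}$. In particular I would record the two instances I need, $\|e_j\|_{2}\lesssim N^{-(m-2)}\|q_j\|_{m}$ and $\|e_j\|\lesssim N^{-m}\|q_j\|_{m}$ for $j=1,2$.

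Next I would decompose each operator into its second-order differential part and its zeroth-order multiplication part and estimate them separately by the triangle inequality. For $\mathcal{Q}=\mathcal{A}$ the only derivative contribution comes from the two $-\tfrac12\nabla^2$ entries inside $L_1,L_2$; here I bound $\|\nabla^2 e_j\|\le\|e_j\|_2\lesssim N^{-(m-2)}\|q_j\|_m$, and this is the term that dictates the final rate. The remaining entries of $\mathcal{A}$ -- the potential $V$, the densities $2\betauu|\phiu|^2$ and $\betaud|\phid|^2$ (and the analogous ones in $L_2$), the off-diagonal couplings $\betaud\phiu\overline{\phid}$, and the constant shifts $\tfrac{\delta}{2}$, $\tfrac{\Omega}{2}$, $\mu_g$ -- together with the purely multiplicative entries of $\mathcal{B}$ are all bounded functions on the bounded domain $\Omega$: the trapping potential is continuous hence bounded on $\overline\Omega$, and the ground-state components $\phiu,\phid$ are smooth and numerically compactly supported, so all their products lie in $L^\infty(\Omega)$. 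For any such coefficient $W$ I would then estimate $\|W e_j\|\le\|W\|_{L^\infty(\Omega)}\|e_j\|\lesssim N^{-m}\|q_j\|_m$.

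Collecting the contributions, the multiplication terms decay like $N^{-m}$ and are therefore subdominant, while the Laplacian term decays only like $N^{-(m-2)}$; summing over the finitely many entries and the two components gives $\|\mathcal{Q}\bm{q}-\mathcal{Q}\widetilde{\bm{q}}\|\lesssim N^{-(m-2)}$, with the hidden constant absorbing $\|\bm{q}\|_m$ and the $L^\infty$ bounds of the coefficients. The same argument covers $\mathcal{Q}=\mathcal{B}$, which contains no derivatives and in fact obeys the sharper rate $N^{-m}$, still consistent with the stated bound. I expect the main obstacle to be the second-order term: it is the only place where two derivatives fall on the interpolation error, so it both forces the regularity budget $m>d/2+2$ (so that the $s=2$ interpolation estimate holds with an $N$-independent constant) and determines the loss of two powers of $N$ relative to the best multiplicative rate; verifying the uniform boundedness of the variable coefficients on $\Omega$, though routine, is the other point that must be checked with care.
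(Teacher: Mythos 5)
Your proof is correct and follows essentially the same route as the paper: split $\mathcal{A}$ into its Laplacian part and its multiplicative part, bound the former by the $|\bm{\alpha}|=2$ instance of the Fourier interpolation estimate (giving the dominant $N^{-(m-2)}$ rate) and the latter by the $L^2$ interpolation error times the $L^\infty$ bounds of $V$ and the ground-state products (giving $N^{-m}$), then conclude by the triangle inequality, with $\mathcal{B}$ handled as a purely multiplicative operator. The only difference is presentational — you spell out the $L^\infty$ coefficient bounds that the paper buries in its unproved Corollary \ref{coro_approx} — so your argument is, if anything, slightly more complete.
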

To prove the above theorem, the following preparations are required.
\begin{lem}[\textbf{Fourier spectral approximation} \cite{LiuZhang_opt,SpectralBkShen}] \label{FouApprox}
 For ${q}(\bx) \in H_p^{m}(\Omega)$  with $m> d/2$, and its Fourier pseudo-spectral approximation $\cI {q}$,
 we have the following error estimates
\bea\label{approx_lem_2}
\| \partial^{\bm{\alpha}}( {q}- \widetilde{q} )\| & \lesssim & N^{-(m - |\bm{\alpha}|)} |{q}|_{m} ,\quad\quad ~0\leq |\bm\alpha|\leq m.
\eea
\end{lem}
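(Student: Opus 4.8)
The plan is to split the interpolation error $q-\cI q$ (with $\cI q=\widetilde q$ the Fourier pseudo-spectral interpolant) into the spectral-truncation error and the aliasing error, and to bound each separately by $N^{-(m-|\bm\alpha|)}|q|_m$. First I would expand $q$ in its Fourier series $q(\bx)=\sum_{\bk\in\mathbb{Z}^d}\widehat q_{\bk}\,W_{\bk}(\bx)$, introduce the resolved index set $K_N=\{\bk\in\mathbb{Z}^d:-N/2\le k_j\le N/2-1\}$, and let $\mathcal{P}_N q$ denote the $L^2$-orthogonal projection onto $\mathrm{span}\{W_{\bk}:\bk\in K_N\}$. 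The triangle inequality then gives $\|\partial^{\bm\alpha}(q-\cI q)\|\le\|\partial^{\bm\alpha}(q-\mathcal{P}_N q)\|+\|\partial^{\bm\alpha}(\mathcal{P}_N q-\cI q)\|$, reducing the claim to two independent estimates.

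For the truncation term, Parseval's identity converts the $L^2$ norm into a tail sum over unresolved frequencies, $\|\partial^{\bm\alpha}(q-\mathcal{P}_N q)\|^2=\sum_{\bk\notin K_N}|\mu_{\bk}^{\bm\alpha}|^2\,|\widehat q_{\bk}|^2$, where $\mu_{\bk}^{\bm\alpha}=\prod_{j}\mu_{k_j}^{\alpha_j}$. On this range at least one $|k_j|\ge N/2$, so $|\bk|\gtrsim N$, and the elementary bound $|\mu_{\bk}^{\bm\alpha}|^2\lesssim|\bk|^{2|\bm\alpha|}=|\bk|^{-2(m-|\bm\alpha|)}|\bk|^{2m}\lesssim N^{-2(m-|\bm\alpha|)}|\bk|^{2m}$ extracts the decay rate. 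Summing and invoking the norm equivalence $|q|_m^2\sim\sum_{\bk}|\bk|^{2m}|\widehat q_{\bk}|^2$ (which follows from Parseval applied to the seminorm) yields $\|\partial^{\bm\alpha}(q-\mathcal{P}_N q)\|\lesssim N^{-(m-|\bm\alpha|)}|q|_m$, already the desired rate.

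The main obstacle is the aliasing term, where the hypothesis $m>d/2$ is essential; it also guarantees, via the embedding $H^m_p(\Omega)\hookrightarrow C^0(\Omega)$, that the grid values defining $\cI q$ make sense. Since the interpolant reproduces grid data, its coefficients are the aliased sums $\widetilde q_{\bk}=\sum_{\bm{m}\in\mathbb{Z}^d}\widehat q_{\bk+N\bm{m}}$ for $\bk\in K_N$, so $\widetilde q_{\bk}-\widehat q_{\bk}=\sum_{\bm{m}\ne 0}\widehat q_{\bk+N\bm{m}}$. After applying $\partial^{\bm\alpha}$ and Parseval, the task becomes bounding $\sum_{\bk\in K_N}|\mu_{\bk}^{\bm\alpha}|^2\big|\sum_{\bm{m}\ne 0}\widehat q_{\bk+N\bm{m}}\big|^2$. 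I would apply Cauchy--Schwarz in $\bm{m}$ with the weight splitting $1=|\bk+N\bm{m}|^{-m}|\bk+N\bm{m}|^{m}$: the factor $\sum_{\bm{m}\ne 0}|\bk+N\bm{m}|^{-2m}\lesssim N^{-2m}$ converges precisely because $2m>d$, while $|\mu_{\bk}^{\bm\alpha}|^2\lesssim N^{2|\bm\alpha|}$ on $K_N$ supplies the complementary power, and the leftover weighted sum collapses onto the unresolved frequencies to give a factor $\lesssim|q|_m^2$. The resulting prefactor $N^{2|\bm\alpha|-2m}=N^{-2(m-|\bm\alpha|)}$ matches the truncation rate, hence $\|\partial^{\bm\alpha}(\mathcal{P}_N q-\cI q)\|\lesssim N^{-(m-|\bm\alpha|)}|q|_m$.

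Combining the two bounds gives $\|\partial^{\bm\alpha}(q-\cI q)\|\lesssim N^{-(m-|\bm\alpha|)}|q|_m$ for all $0\le|\bm\alpha|\le m$, as claimed. I expect the only genuinely delicate point to be the aliasing bookkeeping, namely tracking the mixed weights under Cauchy--Schwarz and verifying that the shift sum $\sum_{\bm{m}\ne 0}|\bk+N\bm{m}|^{-2m}$ is summable uniformly in $\bk\in K_N$, which is exactly where the threshold $2m>d$ enters; the truncation estimate and the final assembly are routine Parseval computations.
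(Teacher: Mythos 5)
The paper offers no proof of this lemma at all: it is quoted as a known result from \cite{LiuZhang_opt,SpectralBkShen}, so the only benchmark is the standard argument in those references. Your reconstruction---splitting $q-\cI q$ into the truncation error $q-\mathcal{P}_N q$ (a Parseval tail sum, using $|\bk|\gtrsim N$ off the resolved set and the seminorm equivalence $|q|_m^2\sim\sum_{\bk}|\bk|^{2m}|\widehat q_{\bk}|^2$) and the aliasing error (Cauchy--Schwarz over the lattice shifts $\bk+N\bm{m}$, which are injective and land outside $K_N$, with $\sum_{\bm{m}\neq 0}|\bk+N\bm{m}|^{-2m}\lesssim N^{-2m}$ uniformly in $\bk\in K_N$ exactly when $2m>d$)---is precisely the standard textbook proof found in those references, and it is correct, including the role of $m>d/2$ in legitimizing both the pointwise interpolation data and the aliasing sum.
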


Since operators $\mathcal A$ and $\mathcal B$ contain only two types of operators, namely, the Laplacian operator $\nabla^2$
and function multiplication operation that involves  $V(\bx)$ \& $\phi_i^g\phi_j^g$, the following corollary is  a prerequisite for proving Theorem \ref{thm:stability}.
\begin{corollary}\label{coro_approx}
 For ${q}(\bx) \in H_p^{m}(\Omega)$  with $m> d/2$, and its Fourier pseudo-spectral approximation $\cI {q}$, we have
\bea
\|\nabla^2 {q}- \nabla^2 \widetilde{q} \| & \lesssim & N^{-(m - 2)} |{q}|_{m},\\ \label{error_1}
\|V {q}- V \widetilde{q} \|  & \lesssim & N^{-m } |{q}|_{m},\\ \label{error_2}
\|{\phi_j^g}{\phi_l^g} {q}- {\phi_j^g}{\phi_l^g} \widetilde{q} \|  & \lesssim & N^{-m } |{q}|_{m} \ \ (j,l=1,2).\label{error_3}
\eea
\end{corollary}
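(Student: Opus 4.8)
The plan is to observe that every operator entering $\cA$ and $\cB$ is assembled from only two elementary operations: the Laplacian $\nabla^2$ and pointwise multiplication by a fixed bounded function, namely the trapping potential $V$ or a product $\phi_j^g\phi_l^g$ of ground-state components. Because the approximation error of a finite sum or of a product operator is controlled by the errors of its constituents, it suffices to establish the three displayed estimates separately, and each will follow as a direct consequence of Lemma \ref{FouApprox}.

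For the Laplacian bound I would write $\nabla^2 q - \nabla^2 \widetilde{q} = \nabla^2(q-\widetilde{q}) = \sum_{j=1}^d \partial_{x_j}^2(q-\widetilde{q})$ and apply the triangle inequality. Each summand is a derivative $\partial^{\bm{\alpha}}(q-\widetilde{q})$ with $|\bm{\alpha}|=2$, so Lemma \ref{FouApprox} yields $\|\partial_{x_j}^2(q-\widetilde{q})\| \lesssim N^{-(m-2)}\,|q|_m$. There are exactly $d$ such terms, a fixed constant independent of $N$, which is absorbed into the symbol $\lesssim$; this gives the first estimate.

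For the two multiplication bounds I would factor out the multiplier, writing $V q - V \widetilde{q} = V(q-\widetilde{q})$ and $\phi_j^g\phi_l^g q - \phi_j^g\phi_l^g \widetilde{q} = \phi_j^g\phi_l^g(q-\widetilde{q})$, and then pass the $L^\infty$ norm of the multiplier out of the $L^2$ norm, for instance $\|V(q-\widetilde{q})\| \leq \|V\|_{L^\infty(\Omega)}\,\|q-\widetilde{q}\|$, and likewise for the product of ground states. Applying Lemma \ref{FouApprox} with $\bm{\alpha}=0$ gives $\|q-\widetilde{q}\| \lesssim N^{-m}\,|q|_m$, which completes both estimates.

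The only point requiring care, and the sole potential obstacle, is to justify that the multipliers lie in $L^\infty(\Omega)$, so that the constants $\|V\|_{L^\infty(\Omega)}$ and $\|\phi_j^g\phi_l^g\|_{L^\infty(\Omega)}$ are finite and independent of $N$. This follows from the standing assumptions: $V$ is the continuous harmonic potential \eqref{Vpoten}, hence bounded on the truncated bounded domain $\Omega=\mathcal{D}_L$, while each ground-state component $\phi_j^g$ is smooth and numerically compactly supported in $\Omega$, hence bounded, so that any product $\phi_j^g\phi_l^g$ is bounded as well. With these uniform bounds in hand, the three estimates assemble immediately into the stability bound of Theorem \ref{thm:stability}.
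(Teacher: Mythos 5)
Your proposal is correct and follows exactly the route the paper intends: the paper omits the details, stating only that the corollary follows easily from Lemma \ref{FouApprox}, and your argument (triangle inequality over the $d$ second derivatives with $|\bm{\alpha}|=2$ for the Laplacian, and factoring out the $L^\infty$-bounded multipliers $V$ and $\phi_j^g\phi_l^g$ on the truncated domain before applying the lemma with $\bm{\alpha}=\mathbf{0}$) is precisely the omitted computation. Your attention to the finiteness of the multiplier bounds, which holds since $V$ is continuous on the bounded domain $\mathcal{D}_L$ and the ground-state components are smooth, is the right point of care and is consistent with the paper's standing assumptions.
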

It is easy to prove the above corollary using Lemma \ref{FouApprox}, and we shall omit details for brevity.

\vspace{0.2cm}

\noindent {\bf The proof of Theorem \ref{thm:stability}}:
\vspace{-0.2cm}
\begin{proof}
For simplicity, we only prove the $\mathcal Q=\mathcal A$ case and extensions to other operators are similar.
From \eqref{def_B} and \eqref{approx_lem_2}-\eqref{error_2}, we obtain
$$\|(-1/2\nabla^2)\mathbf{I}_2(\bm {q})-(-1/2\nabla^2)\mathbf{I}_2(\widetilde{\bm {q}})\| \lesssim N^{-(m-2)}|\bm {q}|_{m},$$
where $\mathbf I_2$ is the second-order unit diagonal matrix.

For the other term of $\mathcal A$ as shown in Eqn. \eqref{def_B}, using \eqref{approx_lem_2}, \eqref{error_3} and triangle inequality,
the following error estimate holds:

\beas
\left\|\big(\mathcal{A}-(-1/2\nabla^2)\mathbf{I}_2\right)\bm {q} - \left(\mathcal{A}-(-1/2\nabla^2)\mathbf{I}_2\big)\widetilde{\bm {q}}\right\|\lesssim & N^{-m}|\bm {q}|_{m}.
\eeas
Therefore, we finish the proof of Theorem \ref{thm:stability}.
\end{proof}

\

Then we will propose the error estimates for the FS approximations. To begin with, and adopt the Sobolev space $\mathbb{V} = (H^1_p(\Omega))^2\times (H^1_p(\Omega))^2$ with the following norm
$$\|\bm \Phi\|_{1} = \sqrt{\|\bm f\|_{1}^2+\|\bm g\|_{1}^2}~\mbox{ and }~ \|\bm \Phi\| = \sqrt{\|\bm f\|^2+\|\bm g\|^2}, \quad ~\forall \,\bm\Phi:= (\bm f;\bm g) \in \mathbb V.$$

The BdG equation \eqref{HpHm-eq} is equivalent to the following eigenvalue problem:
\bea \label{HpHm-eq-1}
\widetilde\cH\begin{bmatrix}
	\bbf  \\ \bbg
	\end{bmatrix} :=
\begin{bmatrix}
\mathcal H_+&\mathcal O\\
	\mathcal O&\mathcal H_-
\end{bmatrix}
\begin{bmatrix}
	\bbf  \\ \bbg
	\end{bmatrix} =
	\omega \begin{bmatrix}
0&1\\
	1&0
\end{bmatrix}
\begin{bmatrix}
	 \bbf  \\ \bbg
\end{bmatrix}=:
	\omega \mathcal J
\begin{bmatrix}
	 \bbf  \\ \bbg
\end{bmatrix}.
\eea
The Galerkin weak form of Eqn.~\eqref{HpHm-eq-1} reads as: to find $0\neq \omega\in \mathbb{R}$ and $\bm \Phi=(\bm f;\bm g) \in \mathbb{V}$ such that
\bea\label{BdG-wf}
a(\bm \Phi,\bm\Psi) = \omega\, b(\bm \Phi,\bm\Psi), \ \ \ \forall\, \bm\Psi= (\bm\xi;\bm\eta)\in \mathbb{V},
\eea
subject to constraint $b(\bm \Phi,\bm \Phi) = 1/2$ with
\beas
a(\bm \Phi,\bm\Psi) := \lag \cH_+ \bm f,\bm\xi\rag  + \lag \cH_-\bm g,\bm\eta\rag, \quad b(\bm \Phi,\bm\Psi) := \lag\bm g,\bm\xi\rag + \lag\bm f,\bm\eta\rag.
\eeas

\begin{remark}
From Remark \ref{rem:withJJ} (or \ref{rem:withoutJJ}), we can prove that there exists $c_0>0$ such that
\begin{equation}\label{coer_BdG}
a(\bm \Phi,\bm\Phi) \geq c_0\|\bm \Phi\|_{1},\quad \forall\, \bm\Phi \in \mathbb{U},
\end{equation}
where the subspace $\mathbb{U}$ is defined as
$$ \mathbb{U} = \ns(\cH_+)^\bot\times \ns(\cH_-)^\bot \subset \mathbb{V},$$
with $\ns(\cH_\pm)^\bot := \{\bm f \in (H_p^1(\Omega))^2\, |\, \lag\bm f,\bm\xi\rag = 0, ~\forall\, \bm\xi \in \ns(\cH_\pm)\}$.
It is important to point out that solving the BdG equation \eqref{BdG-wf} in $\mathbb{U}$ is equivalent to solving non-zero $\omega$ and its associated eigenfunction $\bm{\Phi}$ in $\mathbb{V}$.
\end{remark}
Define the approximation finite dimensional spaces $X_N$ of $H^1_p(\Omega)$ and $\mathbb{V}_N$ of $\mathbb{V}$ as follows:
$${X}_N:= \spa\left\{W_k(\bx), k = -N/2,\ldots, N/2-1\right\},\quad \mathbb{V}_N := ({X}_N)^2\times ({X}_N)^2,$$
and consider its approximation problem: to find $0\neq \omega_N \in \mathbb{R}$ and $\bm \Phi_N=(\bm f_N;\bm g_N)\in \mathbb{V}_N$ such that
\bea\label{BdG-wf-a}
a_N(\bm \Phi_N,\bm\Psi_N) = \omega_N\,b_N(\bm \Phi_N,\bm\Psi_N), \ \ \ \forall\, \bm\Psi_N=(\bm \xi_N;\bm \eta_N)\in \mathbb{V}_N,
\eea
subject to normalization constraint $b_N(\bm \Phi_N,\bm\Phi_N)={1}/{2}$ with
\beas
a_N(\bm \Phi_N,\bm\Psi_N) = \lag\cH_+\bm f_N,{\bm\xi_N}\rag_N+ \lag\cH_-\bm g_N,{\bm\eta_N}\rag_N, \quad
b_N(\bm \Phi_N,\bm\Psi_N) = \lag\bm g_N,{\bm\xi_N}\rag_N + \lag\bm f_N,{\bm\eta_N}\rag_N,
\eeas
 and  $\lag\bm f,\bm \xi\rag_N:= \sum_{j}\lag f^N_j,\xi^N_j\rag_N := \sum_{j} \big(\frac{2L}{N}\sum_{n=0}^N{f_j^N}(\bx_n)\overline{g_j^N(\bx_n)}\big)$.

 \

\noindent Similar to \cite[Lemma A.1]{LLXZ-BdGSpin1}, we have the following equivalence result and choose to omit proofs for brevity.
\begin{lem}
The discrete problem \eqref{lrep-d} and the discrete variational problem \eqref{BdG-wf-a} are equivalent.
\end{lem}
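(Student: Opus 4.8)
The plan is to recognize the discrete Galerkin equation \eqref{BdG-wf-a} as \eqref{lrep-d} written in nodal (grid-value) coordinates, exploiting that the discrete inner product $\lag\cdot,\cdot\rag_N$ is a pure grid quadrature. Working in the real setting (the ground state being real), I would first fix the bookkeeping between modes and nodes: every $\bm f_N\in(X_N)^2$ is uniquely determined by its vector of grid values $\mathbf f_N:=\big(\bm f_N(\bx_n)\big)_{\bx_n\in\mathcal T_N}\in\mathbb R^{2N_{\rm t}}$, and this nodal-to-modal correspondence is a linear bijection realized by the discrete Fourier transform \eqref{eq:FT_3}. Directly from its definition as a grid quadrature, the discrete pairing then collapses to a scaled Euclidean product,
\[
\lag \bm p_N,\bm q_N\rag_N = w_N\,\mathbf q_N^{\top}\mathbf p_N,\qquad w_N:=(2L/N)^d,
\]
with $w_N=h_x$ in one dimension, so that ranging the test functions over all of $\mathbb V_N$ is the same as ranging their nodal vectors $\mathbf p,\mathbf q$ over all of $\mathbb R^{2N_{\rm t}}$ in each block.

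Second I would compute the operator action. The crucial identity is that the nodal values of $\mathcal H_\pm\bm f_N$ are produced exactly by the discrete matrices, i.e. $\big(\mathcal H_+\bm f_N\big)(\bx_n)=(\mathbf H_+\mathbf f_N)_n$ and likewise for $\mathcal H_-$. This splits into two pieces: the Laplacian part is exact on $X_N$ because $\nabla^2 W_k=-\mu_k^2 W_k\in X_N$, whence $(\nabla^2\bm f_N)(\bx_n)=([\![\nabla^2]\!]\mathbf f_N)_n$; and the multiplicative parts (the potential $V$, the products $\phi_j^g\phi_l^g$, and the constant shifts) act pointwise, $(V\bm f_N)(\bx_n)=V(\bx_n)\bm f_N(\bx_n)=(\diag(\mathbf V)\mathbf f_N)_n$. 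Since the pairing $\lag\cdot,\cdot\rag_N$ only samples nodal values, combining this with the first step gives $\lag\mathcal H_+\bm f_N,\bm\xi_N\rag_N=w_N\,\mathbf p^{\top}\mathbf H_+\mathbf f_N$, with $\mathbf p$ the nodal vector of $\bm\xi_N$, and the analogous relation for $\mathcal H_-$.

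Third, I would assemble and vary. Substituting these identities into \eqref{BdG-wf-a} yields, for all nodal test vectors $\mathbf p,\mathbf q\in\mathbb R^{2N_{\rm t}}$,
\[
w_N\,\mathbf p^{\top}\mathbf H_+\mathbf f_N + w_N\,\mathbf q^{\top}\mathbf H_-\mathbf g_N = \omega_N\big(w_N\,\mathbf p^{\top}\mathbf g_N + w_N\,\mathbf q^{\top}\mathbf f_N\big).
\]
Cancelling $w_N$ and invoking the arbitrariness of $\mathbf p$ and $\mathbf q$ separately gives $\mathbf H_+\mathbf f_N=\omega_N\mathbf g_N$ and $\mathbf H_-\mathbf g_N=\omega_N\mathbf f_N$, which is exactly the block system \eqref{lrep-d}; the converse implication follows by reversing these steps, and the nodal bijection of the first step makes the correspondence of solutions one-to-one, with the same $\omega_N$ (hence the nonzero/real qualifier is preserved). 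It remains to match the normalization: $b_N(\bm\Phi_N,\bm\Phi_N)=2\lag\bm f_N,\bm g_N\rag_N$, while with $\mathbf u_N=\mathbf f_N+\mathbf g_N$, $\mathbf v_N=\mathbf f_N-\mathbf g_N$ one has $w_N\big(\|\mathbf u_N\|_{l^2}^2-\|\mathbf v_N\|_{l^2}^2\big)=4\lag\bm f_N,\bm g_N\rag_N$, so $b_N(\bm\Phi_N,\bm\Phi_N)=1/2$ is equivalent to the constraint in \eqref{BdG-d}.

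I expect the only genuinely delicate step to be the operator-action identity, namely verifying that the quadrature pairing reproduces $\mathbf H_\pm$ exactly. The point requiring care is that the multiplicative parts of $\mathcal H_\pm$ send $X_N$ outside $X_N$ (a product of trigonometric polynomials has higher degree), so $\mathcal H_+\bm f_N\notin(X_N)^2$ in general; what rescues the argument is that $\lag\cdot,\cdot\rag_N$ evaluates its arguments only on the grid $\mathcal T_N$, where pointwise multiplication is exact, so the aliasing that would contaminate a continuous $L^2$ pairing never enters. One should also confirm that the nodal bijection survives componentwise across the two-component, $(\bm f;\bm g)$-block structure, but this is routine once the scalar one-dimensional case is settled.
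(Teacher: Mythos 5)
Your proposal is correct, and the comparison here is somewhat one-sided: the paper does not actually prove this lemma at all --- it states ``Similar to [Lemma A.1, LLXZ-BdGSpin1], we have the following equivalence result and choose to omit proofs for brevity,'' deferring to a cited preprint. Your argument supplies exactly the kind of self-contained proof being omitted, and it is the natural one: the nodal-modal bijection identifies $(X_N)^2$ with $\mathbb{R}^{2N_{\rm t}}$ (resolving the notational overload in which $\bm f_N,\bm g_N$ denote functions in \eqref{BdG-wf-a} but nodal vectors in \eqref{lrep-d}), the quadrature pairing $\lag\cdot,\cdot\rag_N$ collapses to $w_N$ times the Euclidean product, and the operator-action identity $(\mathcal H_\pm \bm f_N)(\bx_n)=(\mathbf H_\pm \mathbf f_N)_n$ holds because Fourier differentiation is exact on $X_N$ while the multiplicative parts of $\mathcal H_\pm$ act pointwise at the nodes. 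Your flagged subtlety is the right one to flag: $\mathcal H_\pm$ maps $X_N$ out of $X_N$, so the identity would fail under the continuous $L^2$ pairing, but aliasing never enters a pairing that only samples grid values. Two minor remarks: restricting to real test vectors is harmless (coordinate test vectors already force the full vector identity, and the eigenvectors may be taken real since $\mathbf H_\pm$ are real and $\omega_N$ is real), and your normalization bookkeeping $b_N(\bm\Phi_N,\bm\Phi_N)=2\lag\bm f_N,\bm g_N\rag_N=\tfrac12 \Leftrightarrow h_x\bigl(\|\bm u_N\|_{l^2}^2-\|\bm v_N\|_{l^2}^2\bigr)=1$ correctly matches the constraint carried by \eqref{BdG-d}, which completes the equivalence of the constrained problems rather than just the unconstrained eigenvalue equations.
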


To illustrate the convergence behavior, we introduce the following notation
\begin{equation}\label{infSupCondVecFun}
\delta_N(\bm\Phi) = \inf_{\bm\Psi\in \mathbb{V}_N}\bigg\{\|\bm\Phi-\bm\Psi\|_1 + \sup_{\bm \Theta\in \mathbb{V}_N}\frac{|a(\bm\Psi,\bm \Theta)-a_N(\bm\Psi,\bm \Theta)|}{\|\bm \Theta\|_1} \bigg\}.
\end{equation}
Similar as \cite[Lemma A. 2]{LLXZ-BdGSpin1} for scalar function,
by the coerciveness of bilinear operator $a(\cdot,\cdot)$ (Eqn. \eqref{coer_BdG}) and the conformal subspace approximation, we obtain
\begin{equation}
\delta_N(\bm\Phi)\lesssim N^{-(m-\sigma)},\qquad \mbox{with}  \quad \sigma = \max\{1,d/2\}.
\end{equation}
According to the general theory given by \cite[Lemma A. 3]{LLXZ-BdGSpin1}, we derive the following error estimates.

\begin{lem}\label{lem:ErrEsti1}
For any eigenpair approximation $\{\omega_N;\bm\Phi_N\}$ of Eqn. \eqref{BdG-wf-a}, there is
an eigenpair $\{\omega;\bm \Phi\}$ of Eqn. \eqref{BdG-wf} corresponding to $\omega$ such that
\beas
\|\bm \Phi-\bm\Phi_N\|_{1}\lesssim \delta_N(\bm\Phi),\quad
\|\bm \Phi-\bm\Phi_N\| \lesssim \zeta_N\|\bm \Phi-\bm\Phi_N\|_{1},
\quad |\omega-\omega_N| \lesssim \|\bm \Phi-\bm\Phi_N\|_{1}^2,\label{convee}
\eeas
where the constant $\zeta_N$ approaches $0$ as $N\rightarrow \infty$.
\end{lem}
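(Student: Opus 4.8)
The plan is to recognize Lemma~\ref{lem:ErrEsti1} as a specialization of the abstract spectral-approximation theory of \cite[Lemma A.3]{LLXZ-BdGSpin1} to the variational problem~\eqref{BdG-wf}, so that the actual work consists in verifying that theory's hypotheses in the present two-component setting rather than redeveloping the machinery. I would first recast the generalized eigenproblem in operator form by introducing the solution operator $T$ defined on the coercivity subspace $\mathbb{U}$ through $a(T\bm h,\bm\Psi)=b(\bm h,\bm\Psi)$ for all $\bm\Psi\in\mathbb{U}$, which is well defined by Lax--Milgram thanks to the coercivity~\eqref{coer_BdG}, and which is compact because $b$ couples only $L^2$ pairings while $a$ controls the full $H_p^1$ norm, so that the compact embedding $H_p^1(\Omega)\hookrightarrow L^2(\Omega)$ applies. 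A nonzero eigenpair $\{\omega;\bm\Phi\}$ of~\eqref{BdG-wf} then corresponds to the eigenpair $\{1/\omega;\bm\Phi\}$ of $T$, and analogously for the discrete operator $T_N$ built from $a_N,b_N$ on $\mathbb{V}_N$; the eigenvalue and eigenfunction errors are thereby governed by the restricted operator gap $\|(T-T_N)|_E\|$ on the eigenspace $E$ attached to $1/\omega$.

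The first key step is to justify working on $\mathbb{U}=\ns(\cH_+)^\bot\times\ns(\cH_-)^\bot$ rather than on all of $\mathbb{V}$. This is legitimate precisely because we seek only $\omega\neq 0$: by Theorems~\ref{thm:withJJ}/\ref{thm:withoutJJ} the kernel and generalized kernel of $\cH$ are known explicitly and their eigenfunctions are orthogonal to $\mathbb{U}$, while on $\mathbb{U}$ the form $a$ is coercive by~\eqref{coer_BdG}. Coercivity of $a$ together with boundedness of $a$ and $b$ supplies the Babuska-type stability required by the abstract lemma, once one checks that the conforming discrete spaces $\mathbb{V}_N=(X_N)^2\times(X_N)^2\subset\mathbb{V}$ inherit this stability uniformly in $N$.

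The second key step is to bound the operator convergence by the quantity $\delta_N(\bm\Phi)$ of~\eqref{infSupCondVecFun}, which already packages the two error sources: the best-approximation error $\inf_{\bm\Psi}\|\bm\Phi-\bm\Psi\|_1$, controlled by the Fourier estimate of Lemma~\ref{FouApprox}, and the consistency (variational-crime) error $\sup_{\bm\Theta}|a(\bm\Psi,\bm\Theta)-a_N(\bm\Psi,\bm\Theta)|/\|\bm\Theta\|_1$ arising from replacing $\lag\cdot,\cdot\rag$ by the pseudospectral quadrature $\lag\cdot,\cdot\rag_N$, this latter term being exactly what Theorem~\ref{thm:stability} and Corollary~\ref{coro_approx} control. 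Feeding $\delta_N(\bm\Phi)\lesssim N^{-(m-\sigma)}$ into the abstract lemma yields $\|\bm\Phi-\bm\Phi_N\|_1\lesssim\delta_N(\bm\Phi)$ directly. The quadratic eigenvalue bound $|\omega-\omega_N|\lesssim\|\bm\Phi-\bm\Phi_N\|_1^2$ follows from the standard Rayleigh-quotient identity that expresses $\omega-\omega_N$ as a quadratic form in the eigenfunction error $\bm\Phi-\bm\Phi_N$, and the sharpened $L^2$ bound $\|\bm\Phi-\bm\Phi_N\|\lesssim\zeta_N\|\bm\Phi-\bm\Phi_N\|_1$ is an Aubin--Nitsche duality argument, with $\zeta_N$ the adjoint-approximation quantity that tends to $0$ as $N\to\infty$ by density of $\mathbb{V}_N$ in $\mathbb{V}$.

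The main obstacle I anticipate is not the abstract framework but reconciling with it the two structural features of the problem: the indefiniteness of $b$ through the coupling $\mathcal J$ together with the noncoercivity of $a$ on the full space $\mathbb{V}$, which force the whole argument onto the complement $\mathbb{U}$ of the (generalized) nullspace and require verifying that discrete eigenfunctions attached to nonzero $\omega_N$ genuinely approximate elements of $\mathbb{U}$; and the control of the pseudospectral consistency term, where the aliasing introduced by $\lag\cdot,\cdot\rag_N$ acting on the nonlinear coefficients $\phi_j^g\phi_l^g$ must be absorbed at the same rate $N^{-(m-\sigma)}$ as the approximation error. This second point is exactly where Theorem~\ref{thm:stability} is indispensable. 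Once both are in place, the three estimates follow from \cite[Lemma A.3]{LLXZ-BdGSpin1}.
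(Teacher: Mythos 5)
Your proposal is correct and follows essentially the same route as the paper: the paper offers no self-contained proof but derives the lemma directly from the abstract theory of \cite[Lemma A.3]{LLXZ-BdGSpin1}, relying on the coercivity \eqref{coer_BdG} on $\mathbb{U}$, the conformity of $\mathbb{V}_N\subset\mathbb{V}$, and the consistency quantity $\delta_N(\bm\Phi)$, exactly the hypotheses you verify. Your reconstruction of the internals of that abstract lemma (solution operator via Lax--Milgram, compactness from $H_p^1\hookrightarrow L^2$, Rayleigh-quotient identity for the eigenvalue bound, Aubin--Nitsche for the $L^2$ estimate) is a faithful expansion of what the citation encapsulates rather than a different argument.
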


Finally, via a change of variables, we obtain error estimates for $(\bm u,\bm v)$.
\begin{thm}[\textbf{Error Estimates}]\label{thm:error-estimate}
If $\bm u,\bm v \in (H_p^{m}(\Omega))^2$  with $m\geq1$ and $\mathrm{supp}\{\bu\}, \mathrm{supp}\{\bv\} \subsetneq \Omega$, then for any eigenpair approximation $\{\omega_N;\bm{u}_N,\bm{v}_N\}$ of \eqref{BdG-d}, there is
an eigenpair $\{\omega;\bm u,\bm v\}$ of Eqn.~\eqref{BdG_new} satisfying the
following error estimates
\beas
\|\bm{u}-\bm{u}_N\|+\|\bm{v}-\bm{v}_N\| &\lesssim& N^{-(m-\sigma)}, \\
|\omega-\omega_N| &\lesssim& N^{-2(m-\sigma)},
\eeas
where $\sigma = \max\{1,d/2\}$.
\end{thm}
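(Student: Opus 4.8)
The plan is to reduce the estimates for $(\bm u,\bm v)$ to the estimates already available for $\bm\Phi=(\bm f;\bm g)$ through the invertible linear change of variables $\bm f=\tfrac12(\bm u+\bm v)$, $\bm g=\tfrac12(\bm u-\bm v)$, whose inverse is $\bm u=\bm f+\bm g$, $\bm v=\bm f-\bm g$. First I would observe that this transformation is a bounded isomorphism on each Sobolev space $(H_p^m(\Omega))^2$ and that it commutes with the Fourier pseudo-spectral discretization, since $\bm f_N=\tfrac12(\bm u_N+\bm v_N)$ and $\bm g_N=\tfrac12(\bm u_N-\bm v_N)$. Consequently, if $\bm u,\bm v\in(H_p^m(\Omega))^2$ then $\bm f,\bm g\in(H_p^m(\Omega))^2$ with $|\bm f|_m+|\bm g|_m\lesssim|\bm u|_m+|\bm v|_m$, and the eigenpair $\{\omega_N;\bm u_N,\bm v_N\}$ of \eqref{BdG-d} corresponds exactly to an eigenpair $\{\omega_N;\bm\Phi_N\}$ of \eqref{BdG-wf-a} with the same eigenvalue.

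Next I would invoke the consistency bound $\delta_N(\bm\Phi)\lesssim N^{-(m-\sigma)}$ with $\sigma=\max\{1,d/2\}$. This is the analytical core of the argument: it follows from the coerciveness \eqref{coer_BdG} of $a(\cdot,\cdot)$ on $\mathbb{U}$ together with the conformal approximation property of $\mathbb{V}_N$. In the splitting \eqref{infSupCondVecFun}, the best-approximation term $\inf_{\bm\Psi}\|\bm\Phi-\bm\Psi\|_1$ is controlled by Lemma \ref{FouApprox}, while the consistency term $\sup_{\bm\Theta}|a(\bm\Psi,\bm\Theta)-a_N(\bm\Psi,\bm\Theta)|/\|\bm\Theta\|_1$ is controlled by the stability estimate of Theorem \ref{thm:stability} and Corollary \ref{coro_approx}, which bound precisely the pseudo-spectral errors of $\mathcal A$ and $\mathcal B$, and hence of $\mathcal H_\pm$.

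Then I would apply Lemma \ref{lem:ErrEsti1} to obtain $\|\bm\Phi-\bm\Phi_N\|_1\lesssim\delta_N(\bm\Phi)\lesssim N^{-(m-\sigma)}$, the sharpened $L^2$ bound $\|\bm\Phi-\bm\Phi_N\|\lesssim\zeta_N\|\bm\Phi-\bm\Phi_N\|_1\lesssim N^{-(m-\sigma)}$ using $\zeta_N\to0$, and the eigenvalue bound $|\omega-\omega_N|\lesssim\|\bm\Phi-\bm\Phi_N\|_1^2\lesssim N^{-2(m-\sigma)}$. Translating back through the change of variables and the triangle inequality then yields
\[
\|\bm u-\bm u_N\|+\|\bm v-\bm v_N\|\leq 2\big(\|\bm f-\bm f_N\|+\|\bm g-\bm g_N\|\big)\lesssim\|\bm\Phi-\bm\Phi_N\|\lesssim N^{-(m-\sigma)},
\]
while the eigenvalue estimate carries over verbatim since $\omega$ is invariant under the transformation.

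The step I expect to be the main obstacle is establishing $\delta_N(\bm\Phi)\lesssim N^{-(m-\sigma)}$, and within it the quadrature/consistency term $|a(\bm\Psi,\bm\Theta)-a_N(\bm\Psi,\bm\Theta)|$: one must show that replacing the exact $L^2$ inner product by the discrete pseudo-spectral inner product $\langle\cdot,\cdot\rangle_N$ introduces an error of the correct order when the coefficient functions $V$ and $\phi_j^g\phi_l^g$ and the test functions are only finitely smooth. This is exactly where Theorem \ref{thm:stability} and the aliasing properties of the pseudo-spectral projection become essential, and it is the part that genuinely cannot be reduced to a one-line citation; by contrast, the change of variables and the final assembly are routine once the $\delta_N$ estimate is in hand.
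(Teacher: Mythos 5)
Your proposal is correct and follows essentially the same route as the paper: the paper likewise obtains the theorem by combining the equivalence of \eqref{lrep-d} with \eqref{BdG-wf-a}, the bound $\delta_N(\bm\Phi)\lesssim N^{-(m-\sigma)}$ (from coerciveness \eqref{coer_BdG} and the conformal Fourier approximation), Lemma \ref{lem:ErrEsti1}, and the change of variables $\bm u=\bm f+\bm g$, $\bm v=\bm f-\bm g$ with the triangle inequality. Your identification of the consistency term in $\delta_N(\bm\Phi)$ as the technically substantive step, controlled via Theorem \ref{thm:stability} and Corollary \ref{coro_approx}, matches what the paper delegates to the cited lemmas of \cite{LLXZ-BdGSpin1}.
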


\section{Numerical results}
\label{NumResult}
In this section, we shall first carry out a comprehensive numerical investigation to illustrate the accuracy and efficiency.
Then we apply our method to study the Bogoliubov excitations around the ground states.
The ground states $\Phi_g$ and its associated chemical potentials are computed with a very fine mesh size on a large enough
domain using the PCG method, thus achieving an up to almost machine-precision accuracy.
Unless stated otherwise, we choose harmonic trapping potential $V(\bx)$ as described in \eqref{Vpoten} and
denote the trapping frequencies by a vector $\vec{\bm \gamma}  \in \mathbb R^d$, for example, $\vec{\bm\gamma} = (1,2,3)$ represents
$\gamma_x = 1, \gamma_y = 2,\gamma_z=3$ for 3D harmonic trap.
The computation domain is chosen as a rectangular $\mathcal{D}_L=[-L,L]^d$
and discretized uniformly with the same $N \in 2\mathbb Z^{+}$ grid points in each spatial direction.
The mesh sizes are chosen the same in each direction and denoted as $h$ for simplicity.
Throughout this section, we shall take $L = 16$ for the 1D and 2D problems and $ L = 8$ in the 3D computation.

\

The relative error of the eigenvalue is expected to satisfy a prescribed tolerance of $10^{-9}$ throughout this section.
We shall compute BdG excitations of GPEs \eqref{GPEs} with Josephson junctions (\textbf{JJ}) (i.e., $\Omega = 1$ and $\delta = 0$)
and without Josephson junctions (i.e., $\Omega = 0,\delta=0$ and $\alpha=0.2$)
where the interaction strength coefficients are chosen as $\betauu=100, \betaud=94, \betadd=97$.
The algorithms were implemented in Fortran (ifort version 2021.3.0) and run on a 3.00 GHz Intel(R) Xeon(R) Gold 6248R CPU with a 36 MB cache in Ubuntu GNU/Linux.

\subsection{Performance investigation}
\label{Accuracy tests}
In this subsection, we study the performance in terms of accuracy and efficiency in different space dimensions.
Analytical eigenvalues and eigenvectors were given for the harmonic trapping potential by Eqn. \eqref{eq:eig-pair} in {Theorem \ref{lemAnalytical}}.
For an eigenvalue $\og$ with multiplicity $K$, we denote its associated analytical eigenspaces as $\mathcal M_{\bm{u}}:={\rm span} \{\bm{u}_1, \cdots, \bm{u}_K\}, \mathcal M_{\bm{v}}:={\rm span} \{\bm{v}_1, \cdots, \bm{v}_K\}$.
The numerical approximations of $\{\omega_{\sigma};\bm{u}_{\sigma}, \bm{v}_{\sigma}\}$ are denoted by
$\{\omega_{\sigma}^{N};\bm{u}_{\sigma}^{N}, \bm{v}_{\sigma}^{N}\}$ with ${\sigma}\in \{x,y,z\}$.
To demonstrate the results, we adopt the following error functions
\bea
\err_{\og}^{\sigma}:= \fl{|\omega_{\sigma}^N-\omega_{\sigma}|}{|\omega_{\sigma}|},
\qquad
\err_{\bm{uv}}^{\sigma}:=\fl{\| \bm{u}_{{\sigma}}^{N}-\mathcal{P}^N_{\bm{u}}  \bm{u}_{{\sigma}}^{N}\|_{l^2}}{\| \bm{u}_{{\sigma}}^{N} \|_{l^2}}
+ \fl{\| \bm{v}_{{\sigma}}^{N}-\mathcal{P}^N_{\bm{v}} \bm{v}_{{\sigma}}^{N}\|_{l^2}}{\|\bm{v}_{{\sigma}}^{N}\|_{l^2}},
\eea
where $\|\cdot\|_{l^2}$ is the discrete $l^2$ norm and $\mathcal{P}_\nu^N$ ($\nu=\bm{u},\bm{v}$)
is the $l^2$-orthogonal projection operator into space $\mathcal M_\nu^N$
which is the discrete version of $\mathcal{M}_\nu$ at mesh grid $\mathcal T_N$.

\begin{exam}[\textbf{Accuracy}]
\label{Exam_Accuracy_Test}
We study the accuracy behavior for BdG equation \eqref{BdG_uv}-\eqref{BdG_uv-woJJ} with/without Josephson junction in 1D/2D/3D.
To this end, we consider the following four cases:
\begin{itemize}
\item[] \hspace{-0.5cm}{\bf Case I.}  ~1D case: $\gamma_x = 1$;
\item[] \hspace{-0.5cm}{\bf Case II.}  Isotropic trap in 2D:  $\gamma_x = \gamma_y=1 $;
\item[]\hspace{-0.65cm} {\bf Case III.} Anisotropic trap in 2D: $\gamma_x =\gamma_y/2=1$;
\item[] \hspace{-0.5cm}{\bf Case IV.} Isotropic trap in 3D:  $\gamma_x=\gamma_y=\gamma_z=1 $.
\end{itemize}

\end{exam}

\noindent

As proved in Lemma \ref{lemAnalytical}, there exist $d$-pairs of analytical eigenvalue/vectors in $d$-dimensional problems for all cases
and the multiplicity of eigenvalues for isotropic trap (i.e., \textbf{Case II and IV}) is $d$.
Table \ref{Err_1D}-\ref{Err_3D} present the numerical errors of eigenvalues and eigenvectors
that are obtained with different mesh sizes in \textbf{Case I-IV}.
We can observe that:
(1) the numerical solutions manifest a typical spectral convergence for both eigenvalue and eigenvector;
(2) the approximation errors saturate as the mesh size tends smaller
and the saturated accuracy is well below the preset tolerance ($10^{-9}$) of our eigenvalue solver,
including both the 2D and 3D problem (i.e., \textbf{Case II-IV}).

\begin{table}[!htb]
\def\temptablewidth{1\textwidth}
\tabcolsep 0pt	\caption{Errors of the eigenvalue/eigenvector for {\bf Case I} in \textbf{1D}
for with/without \textbf{JJ} in Example \ref{Exam_Accuracy_Test}.}
	\label{Err_1D}\small
	\begin{center}\vspace{-1.5em}
{\rule{\temptablewidth}{1pt}}
\begin{tabularx}{\temptablewidth}{@{\extracolsep{\fill}}ccccccccc}
   & $N$ &$32$& $64$ & $128$ & $256$ & $512$ \rule{0pt}{10pt}\\[0.2em]
\hline
\rule{0pt}{12pt}
\multirow{2}{*}{ $\Omega = 1$}   &  $ \err_{\og}^x $                &1.917E-02 & 4.786E-06 & 1.490E-10 & 4.441E-15 & 7.994E-15\\[0.1em]
	 & $ \err_{\bm{uv}}^{x} $   										 & 3.660E-02 & 1.077E-03 & 4.103E-07 & 9.817E-12 &  1.948E-11
 \\ [0.2em]
\hline
	\rule{0pt}{12pt}
\multirow{2}{*}{ $\Omega =0$}  & $ \err_{\og}^x $              &8.391E-02 & 2.398E-06 &  1.461E-10 & 4.249E-12 & 1.827E-13\\[0.1em]
   &  $ \err_{\bm{uv}}^{x} $   										 &1.728~~~~~~~& 1.077E-03 &  1.461E-07 & 1.675E-10 & 1.997E-11\\
\end{tabularx}
{\rule{\temptablewidth}{1pt}}
\end{center}
\end{table}

\begin{table}[!htb]
\def\temptablewidth{1\textwidth}
\tabcolsep 0pt	\caption{Errors of the eigenvalue/eigenvector for {\bf Case II} (upper) and {\bf III} (lower) in \textbf{2D}
with/without \textbf{JJ} in Example \ref{Exam_Accuracy_Test}.}
\label{Err_2D}\small
\begin{center}
\vspace{-1.5em}
{\rule{\temptablewidth}{1pt}}
\begin{tabularx}{\temptablewidth}{@{\extracolsep{\fill}}ccccccc}
 \multicolumn{7}{c}{\textbf{Case II:} isotropic trap ($\gamma_x = \gamma_y = 1$)} \rule{0pt}{12pt}\\ [0.2em]\hline
   & \makecell[cc]{$N$} &$32$& $64$ & $128$ & $256$ & $512$ \rule{0pt}{10pt}\\[0.2em]
\hline
\rule{0pt}{12pt}
&$ \err_{\og}^{x}$  				              &1.007E-02 & 1.742E-06 & 9.452E-13 & 1.028E-13 & 3.786E-13\\[0.1em]
\multirow{2}{*}{$\Omega=1$}& $\err_{\og}^{y}$         &1.007E-02 & 1.742E-06 & 9.484E-13 & 1.223E-13 & 3.901E-13\\[0.4em]
& $ \err_{\bm{uv}}^{x}$             			      &6.403E-02 & 7.560E-04 & 4.747E-08 & 9.157E-11 & 3.626E-11\\[0.1em]
&$ \err_{\bm{uv}}^{y} $     		                      &5.525E-02 & 6.239E-04 & 3.825E-08 & 4.187E-11 & 4.246E-11\\[0.2em]
\hline
\rule{0pt}{12pt}
                &$ \err_{\og}^{x}$            		     &1.007E-02 & 1.765E-06 & 9.137E-13 & 2.620E-14 & 1.914E-13  \\[0.1em]
\multirow{2}{*}{ $\Omega =0$}  &$ \err_{\og}^{y} $     &1.007E-02 & 1.765E-04 & 9.526E-13 & 3.086E-14 & 2.041E-13  \\[0.4em]
 &$ \err_{\bm{uv}}^{x} $        		             &5.910E-02 & 6.697E-04 & 6.493E-08 & 5.445E-12 & 1.590E-11  \\[0.1em]
 & $ \err_{\bm{uv}}^{y} $       		     &5.131E-02 & 5.470E-04 & 5.261E-08 & 2.671E-12 & 3.810E-11  \\[0.2em]
 \midrule

 \multicolumn{7}{c}{\textbf{Case III: } anisotropic trap ($\gamma_x = 1,\gamma_y =2$) } \rule{0pt}{10pt}\\ [0.3em] \hline
\rule{0pt}{12pt}
 & $N$ &$32$& $64$ & $128$ & $256$ & $512$\\[0.2em]
\hline
\rule{0pt}{12pt}
                &$\err_{\og}^x $                	&7.184E-03 & 8.939E-06 & 1.954E-13 & 2.887E-14 & 3.124E-13 \\[0.1em]
\multirow{2}{*}{ $\Omega =1$}      &$\err_{\og}^y $ 	&1.152E-02 & 5.077E-04 & 1.133E-08 & 2.442E-15 & 1.981E-13 \\[0.4em]
	     &$ \err_{\bm{uv}}^{x}$         		&4.331E-02 & 7.871E-04 & 1.191E-07 & 8.814E-12 & 1.913E-12 \\[0.1em]
                &$ \err_{\bm{uv}}^{y}$ 			&2.659E-02 & 9.205E-03 & 1.118E-05 & 1.391E-11 & 2.451E-12 \\ [0.2em]
\hline
	\rule{0pt}{12pt}
                &$ \err_{\og}^x $              		&7.177E-03&  8.964E-06 & 1.936E-13 & 7.771E-14 & 1.352E-13 \\[0.1em]
\multirow{2}{*}{ $\Omega = 0$ }     &$ \err_{\og}^y $ &1.149E-02&  5.084E-04 & 1.132E-08 & 5.240E-13 & 8.970E-14 \\ [0.4em]
&$ \err_{\bm{uv}}^{x} $         			&4.421E-02&  7.876E-04 & 1.190E-07 & 7.059E-12 & 1.954E-11 \\[0.1em]
                &$ \err_{\bm{uv}}^{y} $         	&2.658E-01&  9.206E-03 & 1.118E-05 & 1.915E-11 & 5.950E-12 \\[0.2em]

\end{tabularx}
{\rule{\temptablewidth}{1pt}}
\end{center}
\end{table}

\begin{table}[!htb]
\def\temptablewidth{1\textwidth}
\tabcolsep 0pt	\caption{Errors of the eigenvalue/eigenvector for {\bf Case IV} in \textbf{3D} with/without \textbf{JJ}
in Example \ref{Exam_Accuracy_Test}.}
\label{Err_3D}\small
\begin{center}
\vspace{-1.5em}
{\rule{\temptablewidth}{1pt}}
\begin{tabularx}{\temptablewidth}{@{\extracolsep{\fill}}cccccccc}
                & $N$                               &$16$   & $32$   & $64$   & $128$  \rule{0pt}{10pt}\\[0.2em]
\hline
	\rule{0pt}{12pt}
                &$ \err_{\og}^x $         		   		  &5.883E-03 & 2.880E-06 & 2.089E-13 & 1.060E-12 \\  [0.1em]
                &$ \err_{\og}^y $  			     		  &5.883E-03 & 2.880E-06 & 2.103E-13 & 1.062E-12 \\  [0.1em]
\multirow{2}{*}{ $\Omega=1$ }       &$ \err_{\og}^z $  	  &5.883E-03 & 2.880E-06 & 2.107E-13 & 1.064E-12 \\  [0.4em]
  &$ \err_{\bm{uv}}^{x} $        						  &7.270E-02 & 5.609E-04 & 2.575E-08 & 8.528E-11 \\  [0.1em]
                &$ \err_{\bm{uv}}^{y} $      		      &7.275E-02 & 5.608E-04 & 2.575E-08 & 4.695E-11 \\  [0.1em]
                &$ \err_{\bm{uv}}^{z} $     		  	      &7.271E-02 & 5.608E-04 & 2.575E-08 & 4.120E-11 \\  [0.2em]
\hline
	\rule{0pt}{12pt}
                &$\err_{\og}^x $         			      &5.895E-03 & 2.881E-06 & 5.730E-13 & 1.798E-12 \\  [0.1em]
                &$\err_{\og}^y $         		  		  &5.895E-03 & 2.881E-06 & 5.798E-13 & 1.811E-12 \\  [0.1em]
\multirow{2}{*}{ $\Omega=0$}  &$\err_{\og}^z $     &5.895E-03 & 2.881E-06 & 5.802E-13 & 1.820E-12 \\  [0.4em]
  &$ \err_{\bm{uv}}^{x} $         						  &7.273E-02 & 5.610E-04 & 2.572E-08 & 1.086E-11 \\  [0.1em]
                &$ \err_{\bm{uv}}^{y} $      			  &7.269E-02 & 5.610E-04 & 2.572E-08 & 1.372E-11 \\  [0.1em]
                &$ \err_{\bm{uv}}^{z} $        			  &7.273E-02 & 5.610E-04 & 2.572E-08 & 1.680E-11 \\  [0.2em]
\end{tabularx}
{\rule{\temptablewidth}{1pt}}
\end{center}
\end{table}

\begin{exam}[\bf Efficiency]\label{Exam_Efficiency_Test}
We investigate the efficiency performance by computing the first $20$ positive eigenvalues
and the corresponding eigenfunctions with/without Josephson junction in 3D.
To this end, we consider the following cases
\begin{itemize}
   \item[] \hspace{-0.5cm}{\bf Case I.}  With \textbf{JJ} for isotropic trap with $\vec{\bm{\gamma}}=(1,1,1)$
      and anisotropic trap with $\vec{\bm{\gamma}} = (1,1,2)$;
   \item[] \hspace{-0.5cm}{\bf Case II.} Without \textbf{JJ} for isotropic trap with $\vec{\bm{\gamma}}=(1,1,1)$
      and anisotropic trap with $\vec{\bm{\gamma}} = (1,1,2)$.
\end{itemize}

\end{exam}

\begin{figure}[!htp]
\centering
\includegraphics[width=8.15cm,height=6.5cm]{./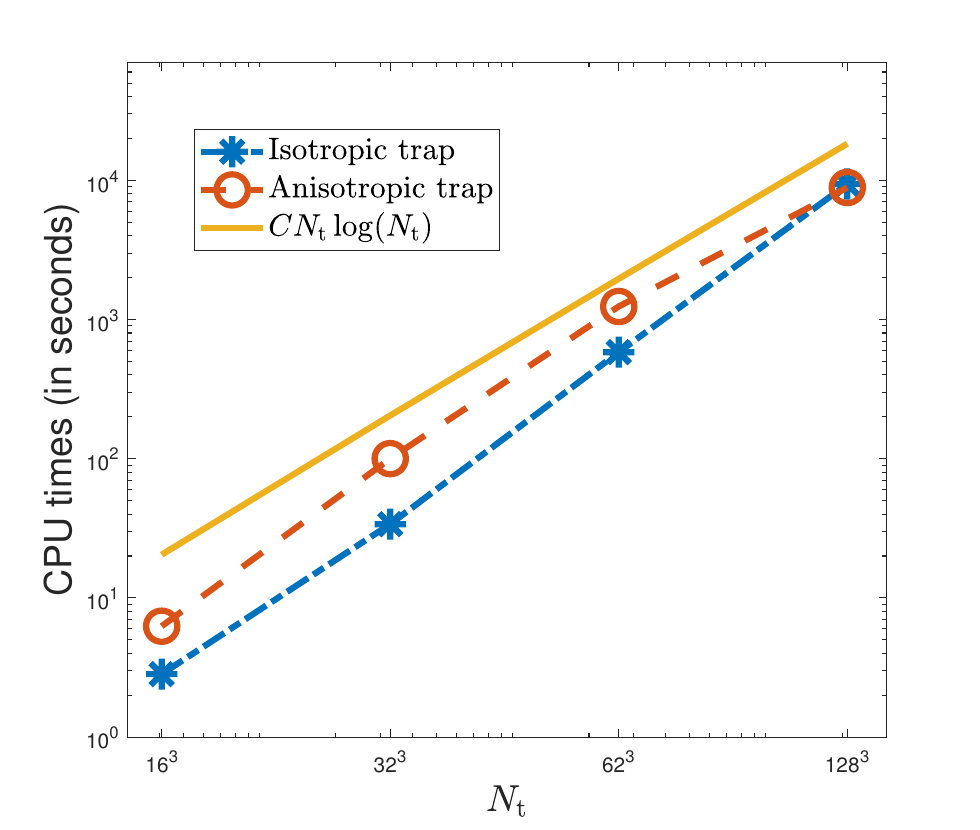}
\includegraphics[width=8.15cm,height=6.5cm]{./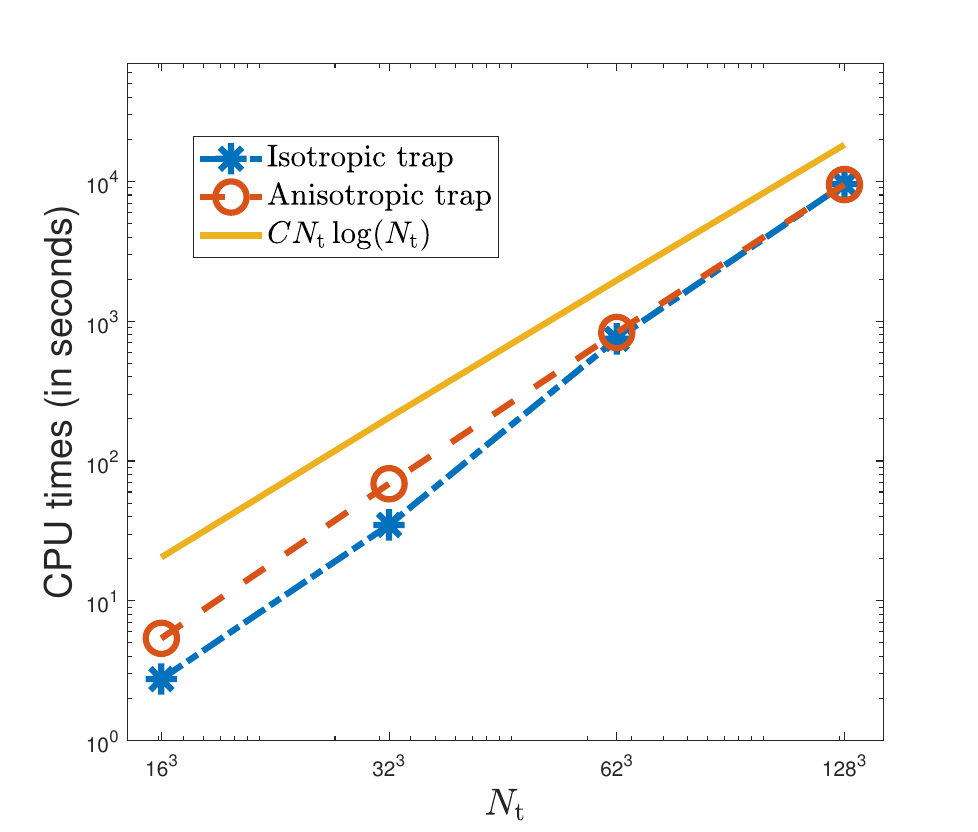}
\caption{The computational time versus degrees of freedom for {\bf Case I} (left) and {\bf Case II} (right) in Example \ref{Exam_Efficiency_Test}.}
\label{times_efficiency_test}
\end{figure}

Figure \ref{times_efficiency_test} presents the computation time (measured in seconds) versus degrees of freedom for 3D problems
as described above in Example \ref{Exam_Efficiency_Test}.
The degree of freedom varies as $16\,384,~131\,072,~1\,048\,576,~8\,388\,608$,
while the number of eigenvalue is kept unchanged as $n_{\rm ev} = 20$ with a tolerance $10^{-9}$.
From Figure \ref{times_efficiency_test}, we can see clearly that the degree of freedom is much larger than the number of desired eigenvalues,
and the overall computation time scales as $\mathcal{O}(N_{\rm t}\log N_{\rm t})$,
which agrees well with the complexity of matrix-vector multiplication as expected.
Given the fact that the discrete BdG system is non-Hermitian and dense, and the first $20$ eigenpairs are computed with 8 million
degrees of freedom within 3 hours using sequential CPU computing, it manifests a very promising potential in physical applications.


\subsection{Applications of 2D cases}

In this subsection, we to investigate the excitation spectrum
and Bogoliubov amplitudes of two-component BEC around the ground state.
To visualize a normal mode, similar to \eqref{waveAssump}, we analyze the evolution of the perturbed density profile \cite{YiLowLying18}
\bea
{n}^\ell_j(\bx,t)=\left|\left[\phi_j(\bx)+ \varepsilon\big(u^\ell_j(\bx)
e^{-i\og_\ell t}+\bar{v}_j^\ell(\bx)e^{i\og_\ell t}\big) \right]\right|^2,\ \ j= 1,2,
\eea
which reveals the nature of excitations indexed $\ell$. To this end, we use a very fine mesh size $h = 1/8$.

\begin{exam}
\label{perturb_2D_1}
We investigate the perturbed density evolution by different excitation modes with/without a Josephson junction in 2D.
To this end, we set perturbation strength $\varepsilon=0.1$ and consider the following four cases:
\begin{itemize}
\item[] \hspace{-0.5cm}{\bf Case I.} ~~Isotropic trap with \textbf{JJ}: $\Omega =1$, $\gamma_x = \gamma_y=1$, $\ell=15$;
\item[] \hspace{-0.5cm}{\bf Case II.} Anisotropic trap with \textbf{JJ}: $\Omega =1$, $\gamma_x = \gamma_y/2 =1$, $\ell=12$;
\item[] \hspace{-0.5cm}{\bf Case III.} Isotropic trap without  \textbf{JJ}: $\Omega=0$, $\gamma_x = \gamma_y=1$, $\ell=14$;
\item[] \hspace{-0.5cm}{\bf Case IV.} Anisotropic trap without \textbf{JJ}: $\Omega=0$, $\gamma_x = \gamma_y/2 =1$, $\ell=16$.
\end{itemize}
\end{exam}

\begin{figure}[!htp]
\centering
  \includegraphics[scale=0.38]{./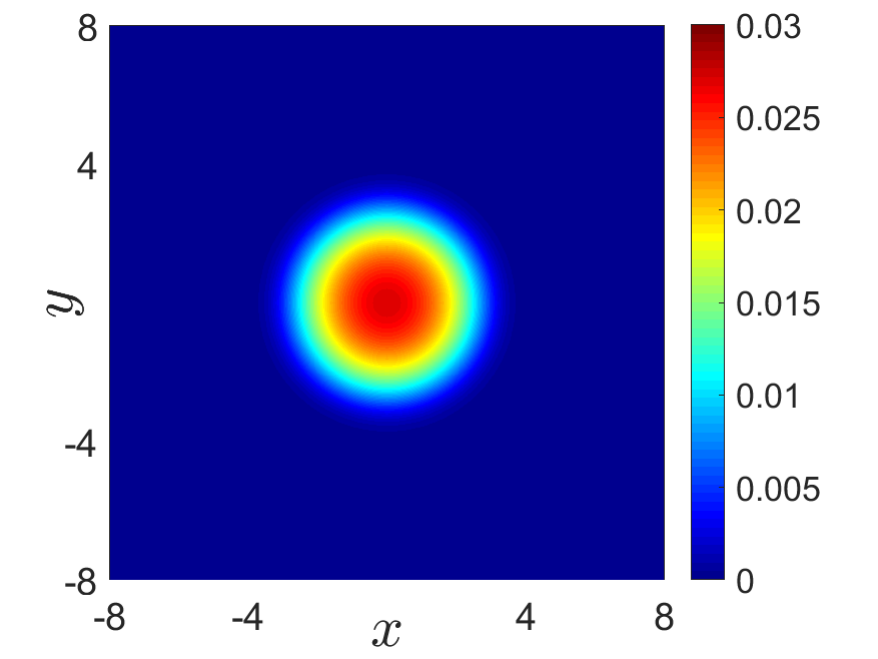}
  \hspace{-0.5cm}
  \includegraphics[scale=0.38]{./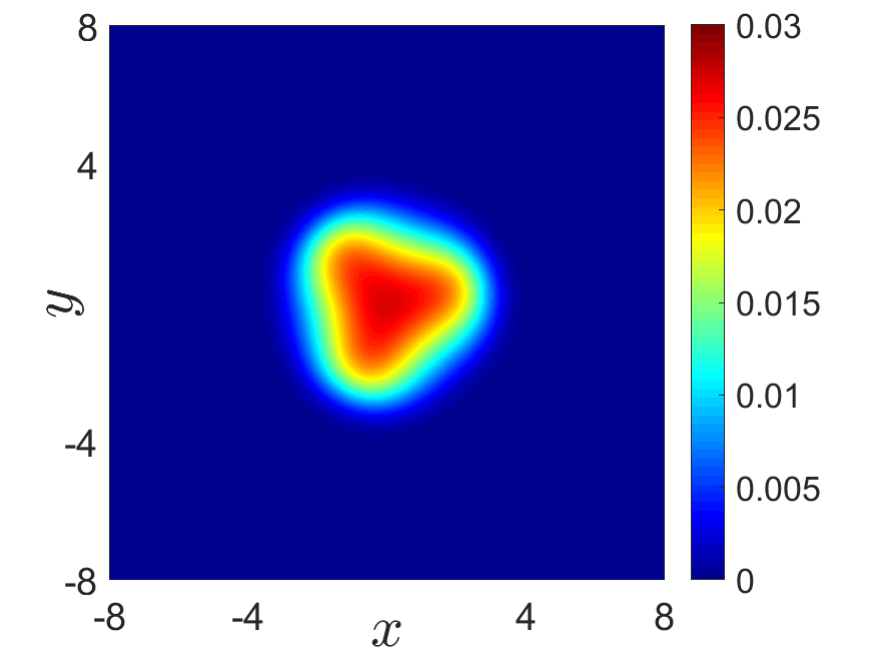}
  \hspace{-0.5cm}
  \includegraphics[scale=0.38]{./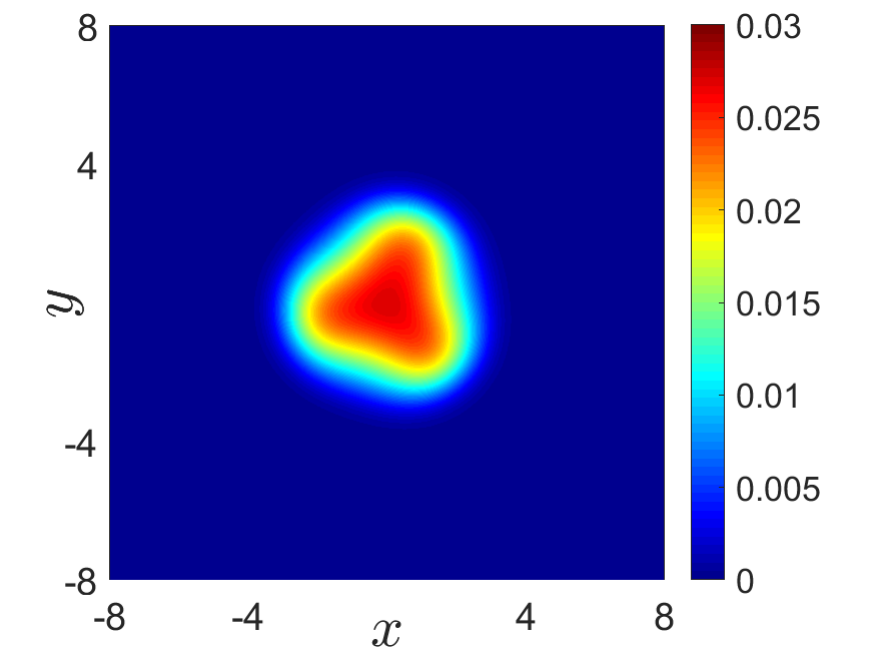}
 \\
  \includegraphics[scale=0.38]{./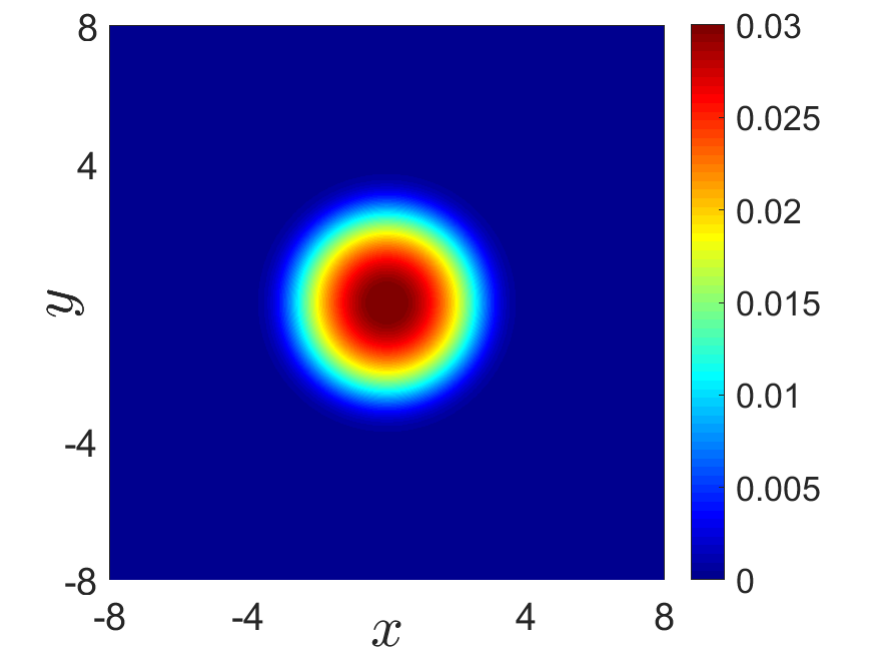}
  \hspace{-0.5cm}
  \includegraphics[scale=0.38]{./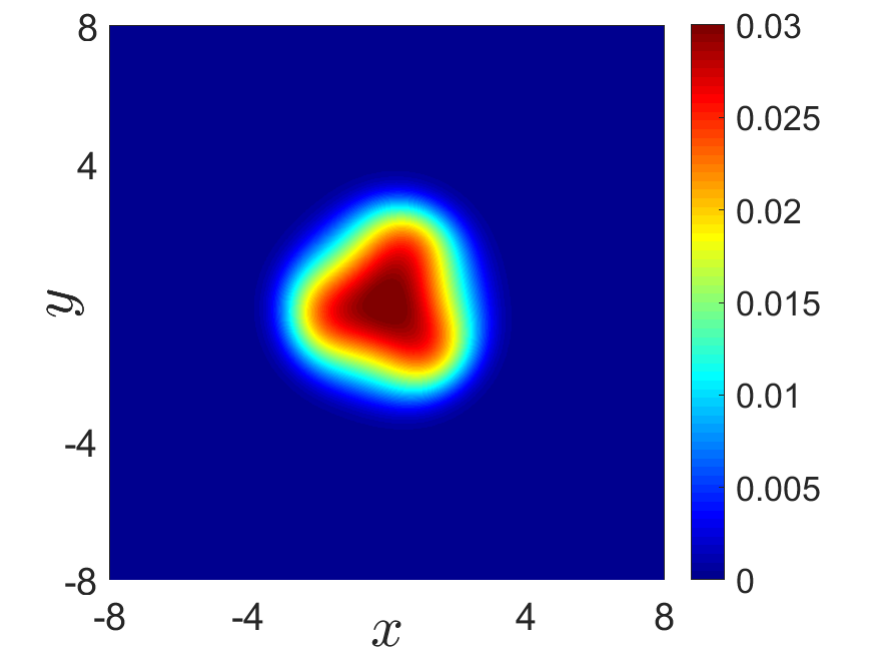}
  \hspace{-0.5cm}
  \includegraphics[scale=0.38]{./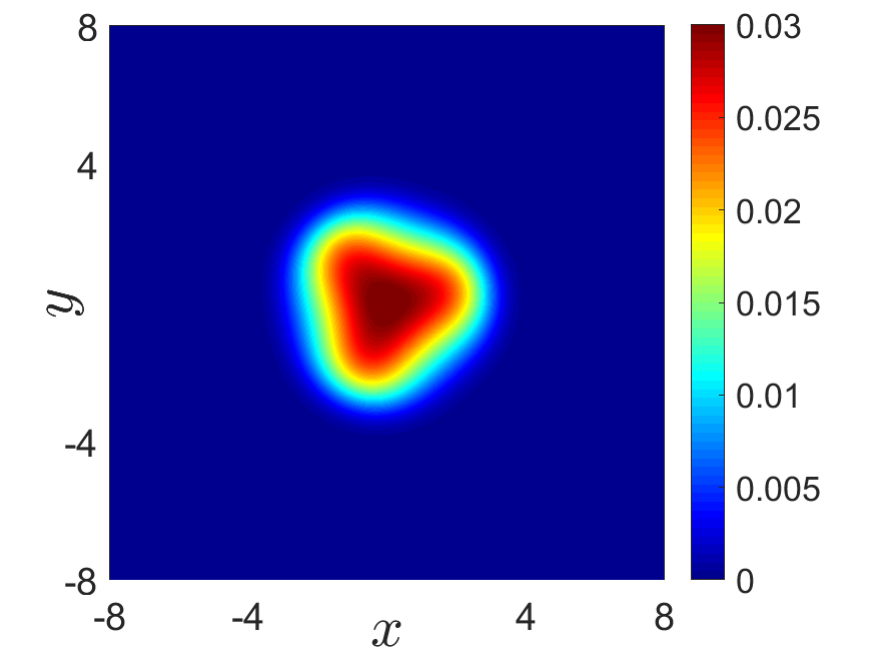}
\caption{Snapshots of the ground states $|\phi_j(\bx)|^2$ (left), perturbed densities $n_j^\ell(\bx,t=7.4)$ (middle) and $n_j^\ell(\bx,t=9.2)$ (right)
by excitation mode indexed $\ell=15$ for {\bf Case I} in Example \ref{perturb_2D_1} ($j=1$ (top), $2$ (bottom)).}
\label{ptb_iso_wJJ-11}
\end{figure}

\begin{figure}[!htp]
\centering
  \includegraphics[scale=0.38]{./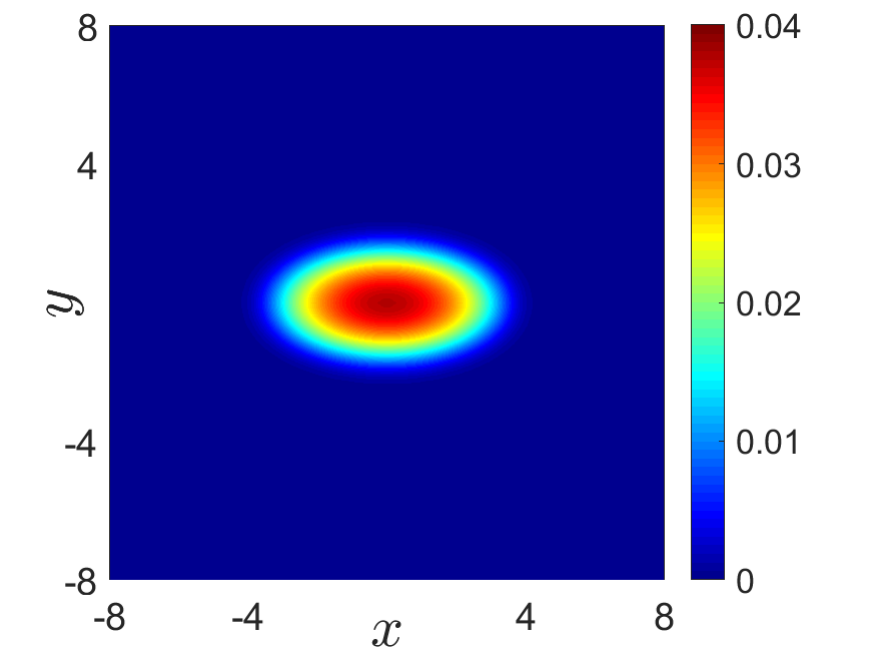}
  \hspace{-0.5cm}
  \includegraphics[scale=0.38]{./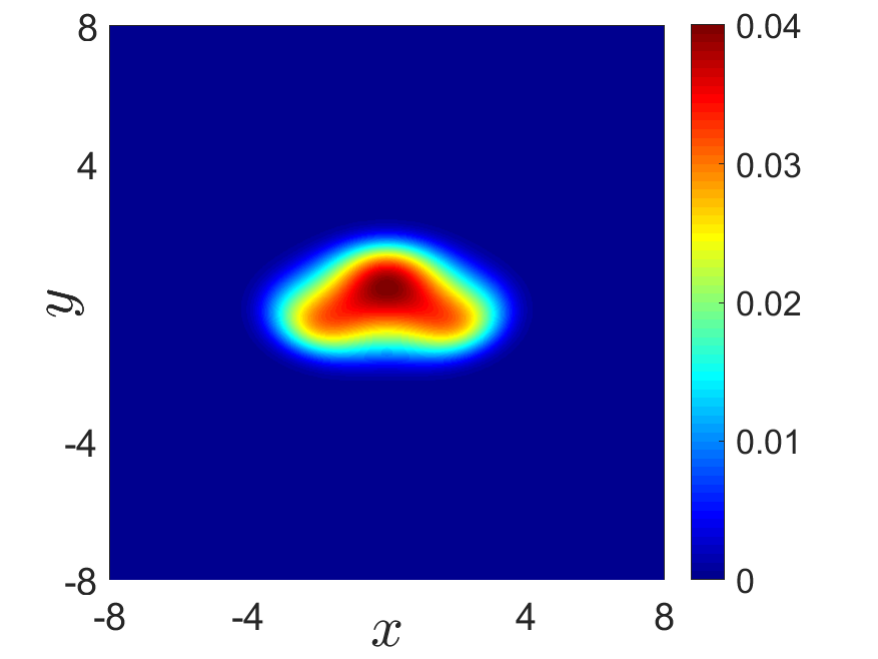}
  \hspace{-0.5cm}
  \includegraphics[scale=0.38]{./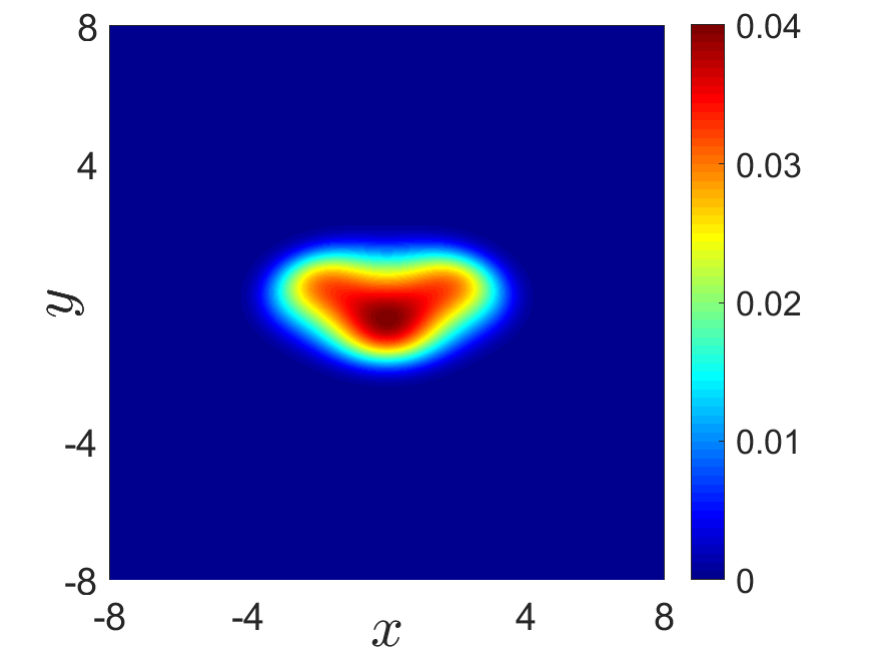}
\\
  \includegraphics[scale=0.38]{./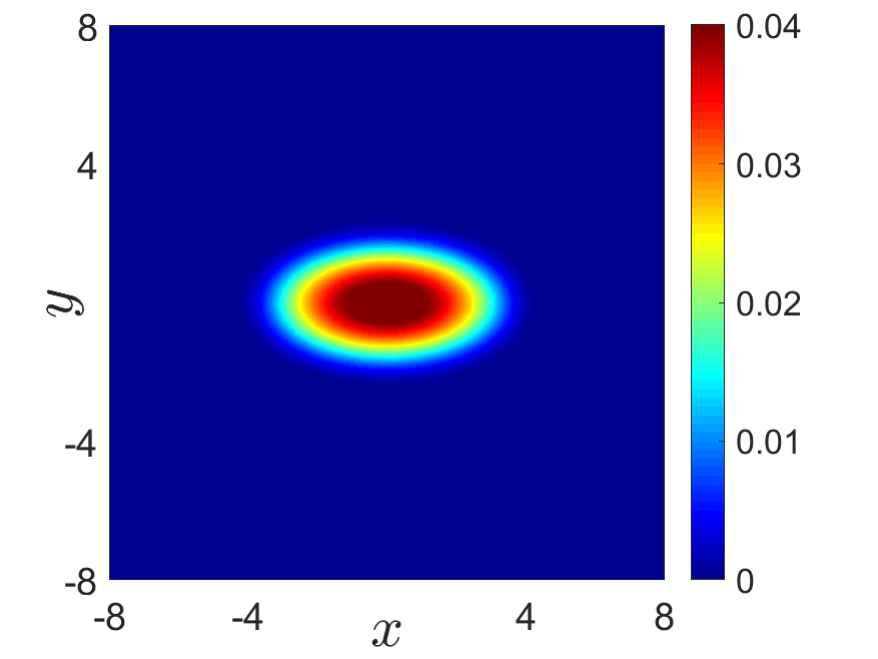}
  \hspace{-0.5cm}
  \includegraphics[scale=0.38]{./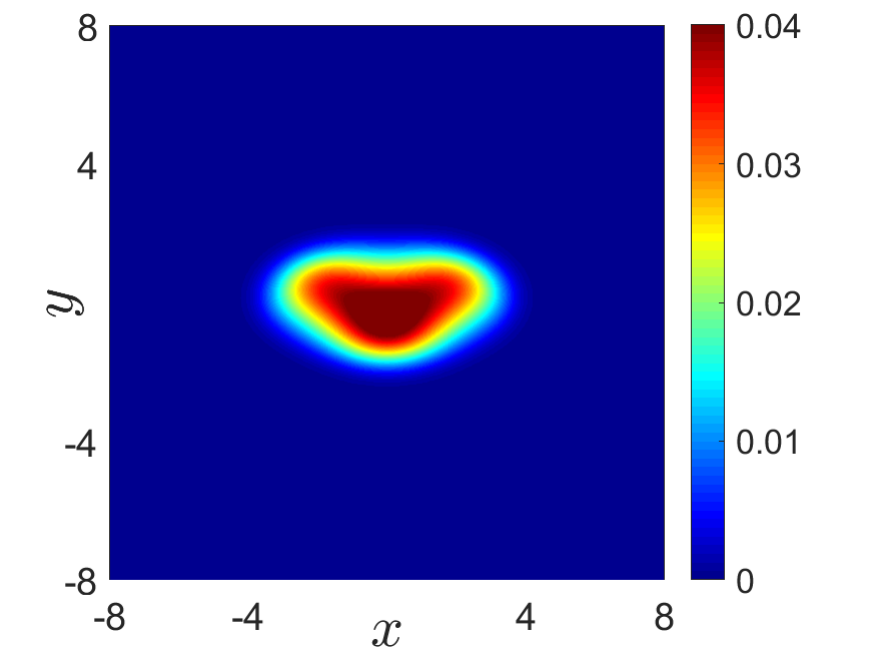}
  \hspace{-0.5cm}
  \includegraphics[scale=0.38]{./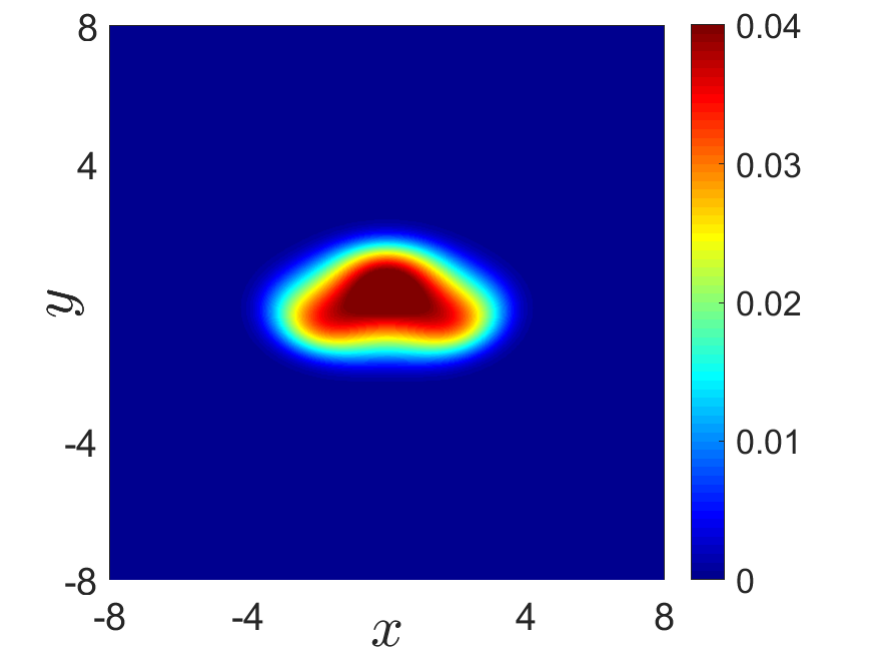}
\caption{Snapshots of the ground states $|\phi_j(\bx)|^2$ (left), perturbed densities $n_j^\ell(\bx,t=1.3)$ (middle) and $n_j^\ell(\bx,t=2.5)$ (right)
by excitation mode indexed $\ell=12$ for {\bf Case II} in Example \ref{perturb_2D_1} ($j=1$ (top), $2$ (bottom)).}
\label{ptb_iso_wJJ-12}
\end{figure}

\begin{figure}[!htp]
\centering
  \includegraphics[scale=0.38]{./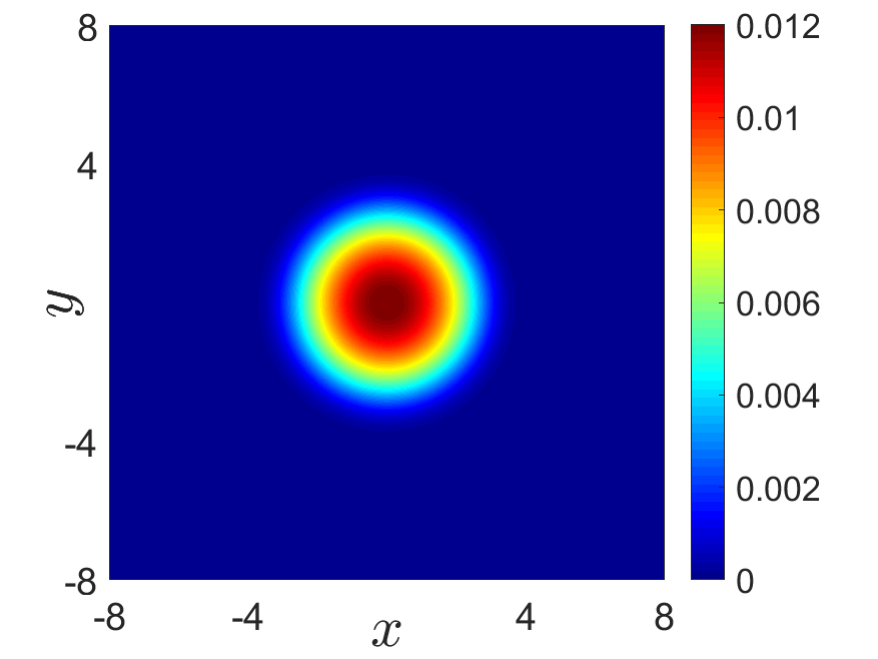}
  \hspace{-0.5cm}
  \includegraphics[scale=0.38]{./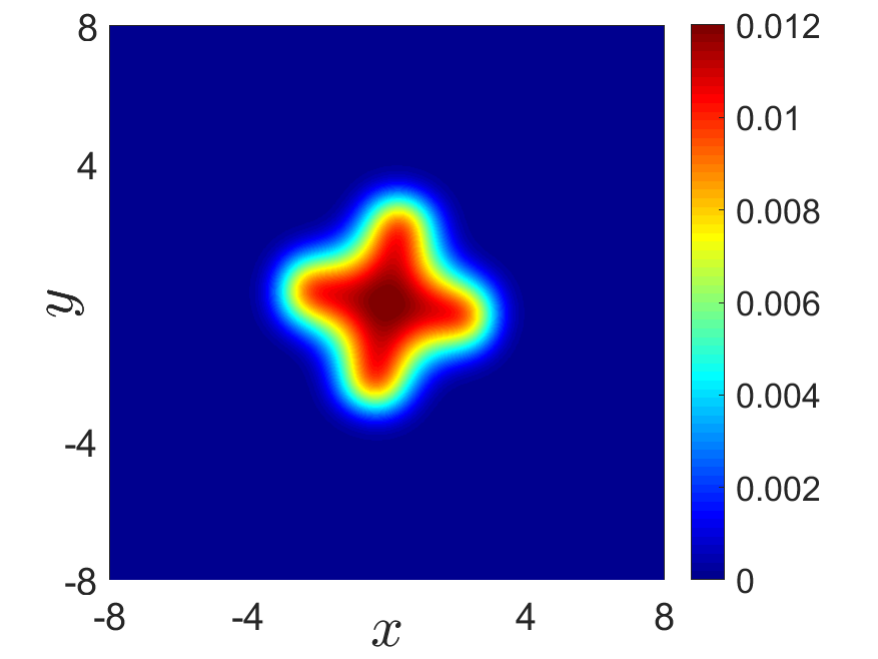}
  \hspace{-0.5cm}
  \includegraphics[scale=0.38]{./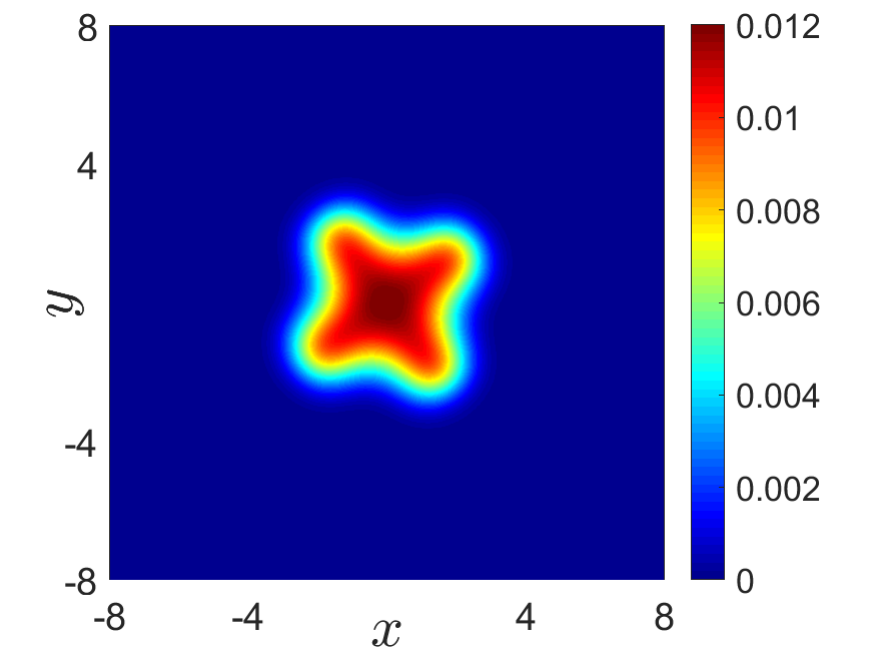}
 \\
  \includegraphics[scale=0.38]{./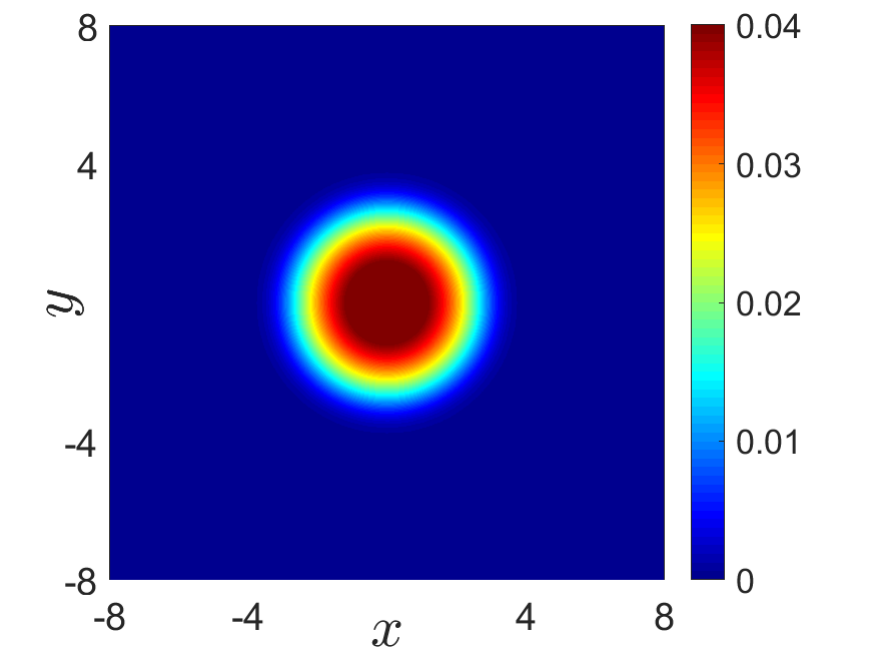}
  \hspace{-0.5cm}
  \includegraphics[scale=0.38]{./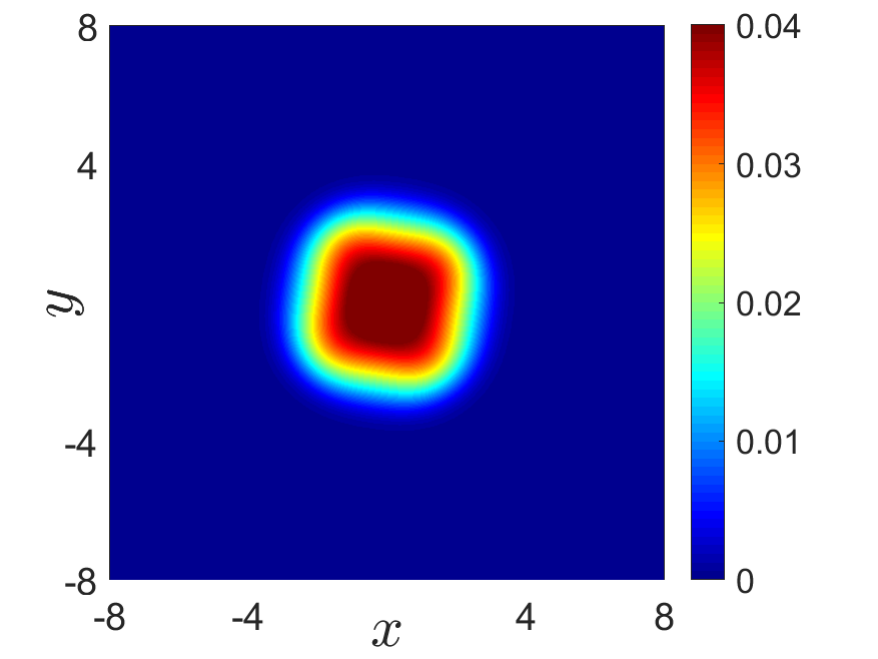}
  \hspace{-0.5cm}
  \includegraphics[scale=0.38]{./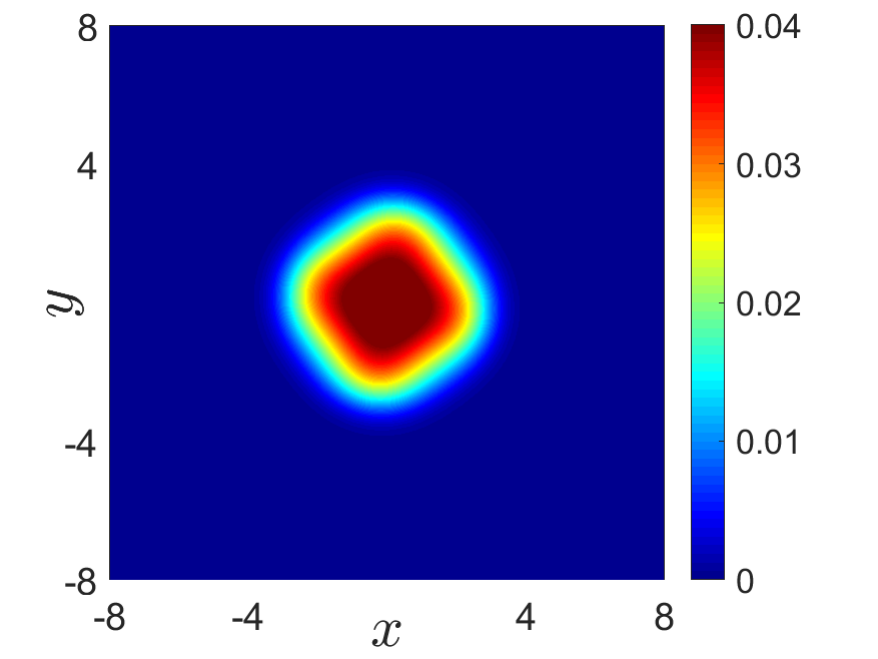}
\caption{
Snapshots of the ground states $|\phi_j(\bx)|^2$ (left), perturbed densities $n_j^\ell(\bx,t=2)$ (middle) and $n_j^\ell(\bx,t=4)$ (right)
by excitation mode indexed $\ell=14$ for {\bf Case III} in Example \ref{perturb_2D_1} ($j=1$ (top), $2$ (bottom)).}
\label{ptb_iso_woJJ-11}
\end{figure}

\begin{figure}[!htp]
\centering
  \includegraphics[scale=0.38]{./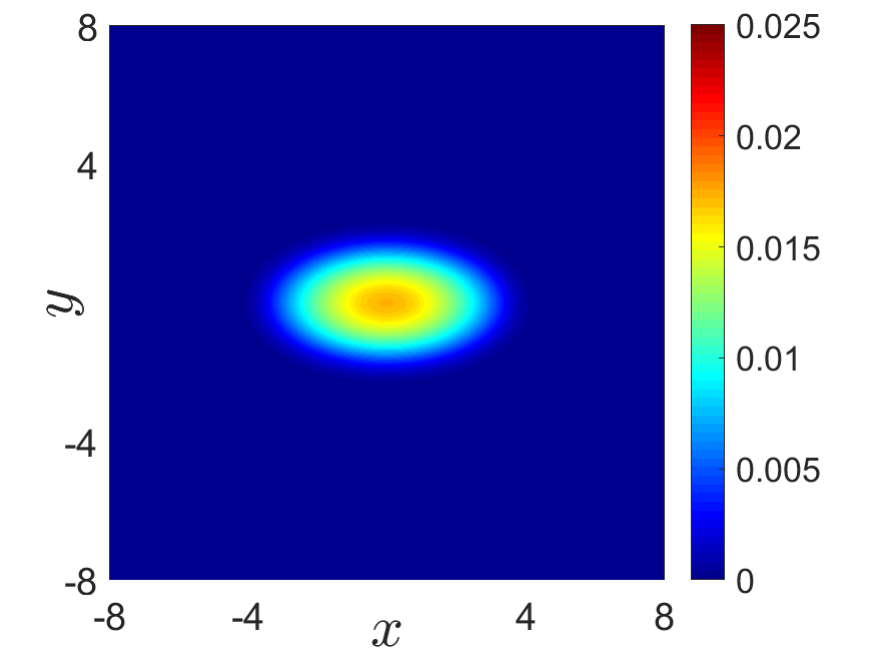}
  \hspace{-0.5cm}
  \includegraphics[scale=0.38]{./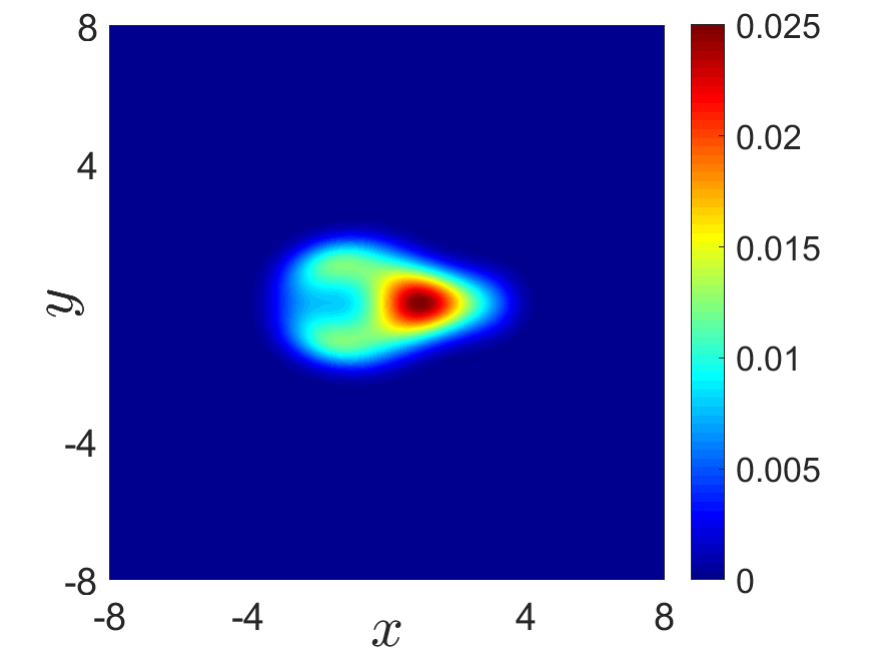}
  \hspace{-0.5cm}
  \includegraphics[scale=0.38]{./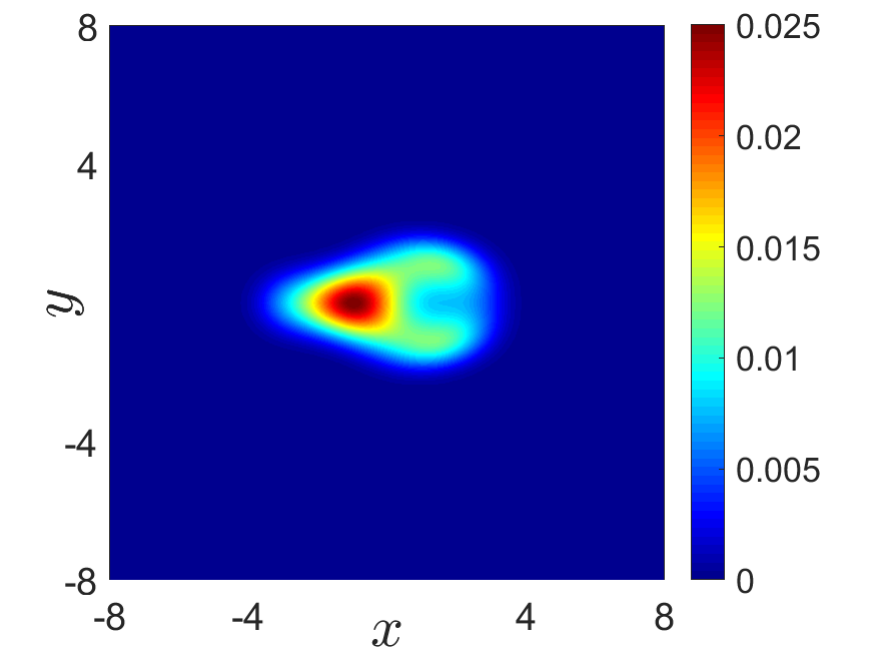}
\\
  \includegraphics[scale=0.38]{./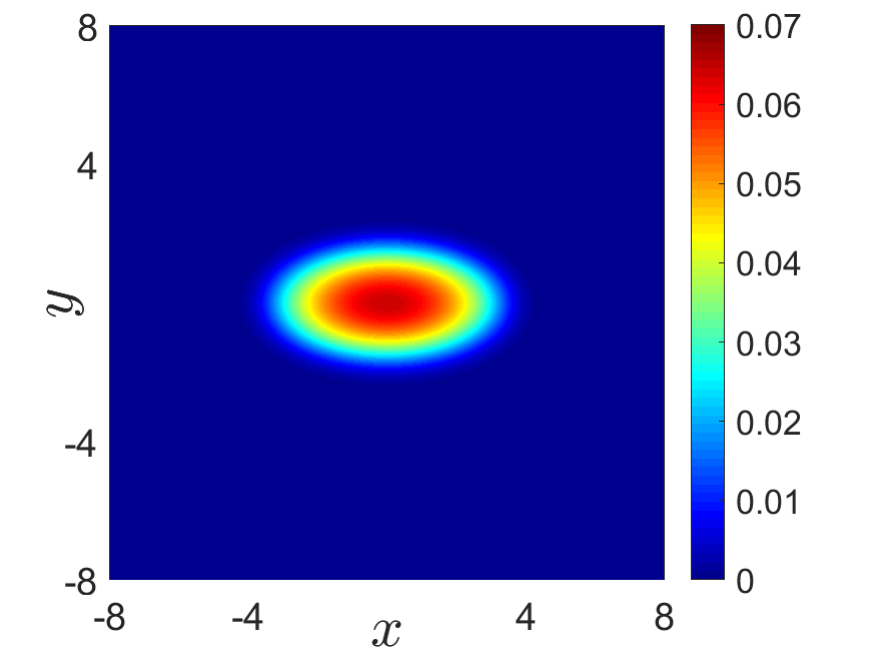}
  \hspace{-0.5cm}
  \includegraphics[scale=0.38]{./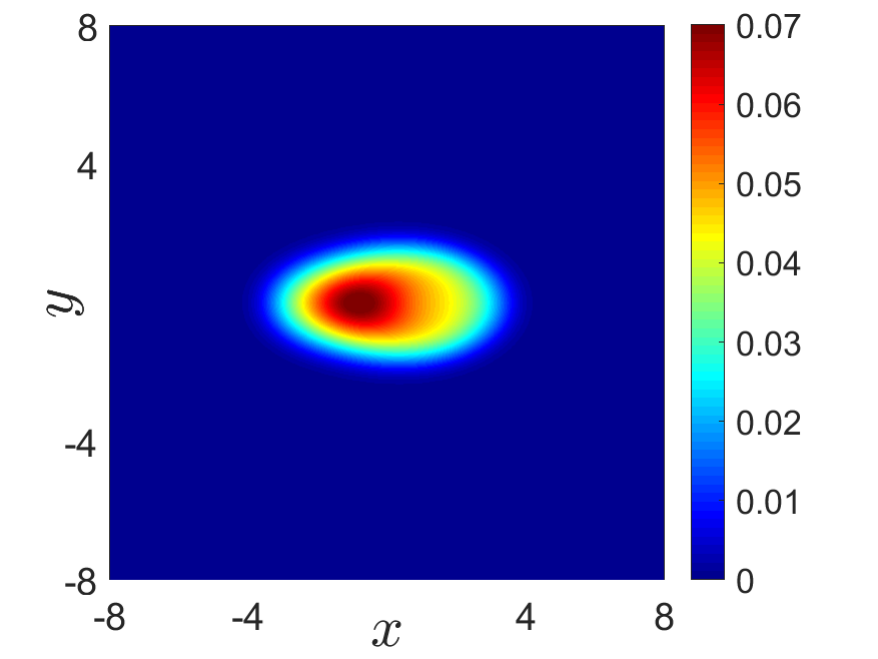}
  \hspace{-0.5cm}
  \includegraphics[scale=0.38]{./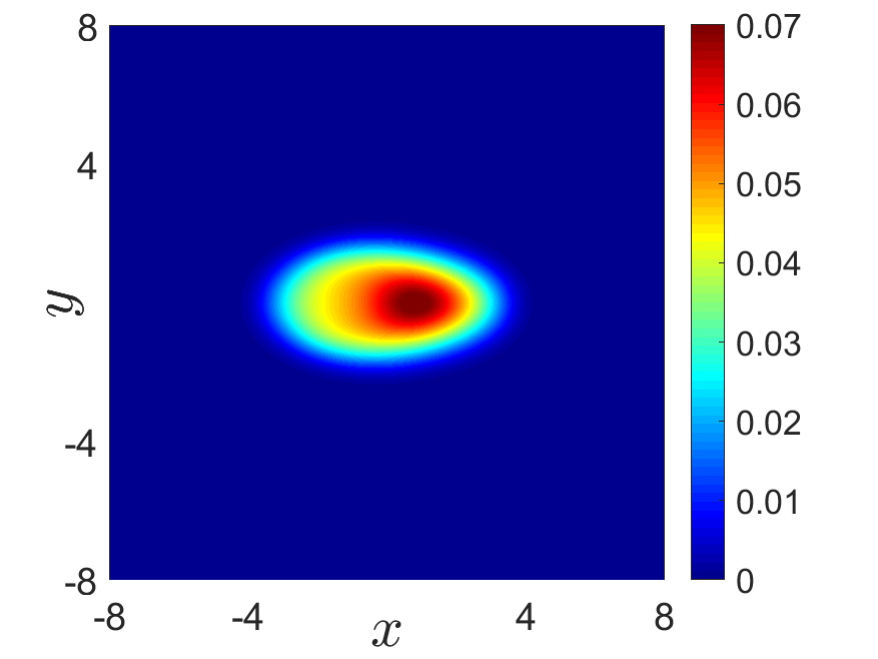}
\caption{
Snapshots of the ground states $|\phi_j(\bx)|^2$ (left), perturbed densities $n_j^\ell(\bx,t=5.4)$ (middle) and $n_j^\ell(\bx,t=6.8)$ (right)
by excitation mode indexed $\ell=16$ for {\bf Case IV} in Example \ref{perturb_2D_1} ($j=1$ (top), $2$ (bottom)).}
\label{ptb_iso_woJJ-12}
\end{figure}

Figure \ref{ptb_iso_wJJ-11}-\ref{ptb_iso_woJJ-12} display the perturbed density evolution $n_j^\ell(\bx,t)$ that are associated
with eigenvalues $\omega_\ell$ and Bogoliubov amplitudes $u^\ell_j,v^\ell_j$ for {\bf Case I--IV} in Example \ref{perturb_2D_1}, respectively. These figures show that the Josephson junction and external potential affect the shape of the perturbed density.
The perturbed densities are point/line-symmetric in a symmetric or antisymmetric external potential, respectively.
Meanwhile, the perturbed density will be compressed along the direction with a larger trapping frequency.
As depicted in the figures, the presence of Josephson junction brings in many more rich physical phenomena for the BdG equations,
which will be detailed in the future.

\subsection{Applications of 3D cases}

In this subsection, we investigate the excitation spectrum
and Bogoliubov amplitudes of the BdG equations of two-component condensates.
To visualize a normal mode, we show the isosurface plots of the Bogoliubov amplitudes.
The numerical computation is carried out with a very fine mesh $h=1/8$.

\begin{exam}
\label{amplitudes1}
We study the Bogoliubov amplitudes of the BdG equations of two-component condensates with/without a Josephson junction in 3D.
In this example, we study the following four cases:
\begin{itemize}
\item[] \hspace{-0.5cm}{\bf Case I.}  $~~\Omega=1$, $\gamma_x = \gamma_y=\gamma_z=1$, $\ell=12$;
\item[] \hspace{-0.5cm}{\bf Case II.}  $~\Omega=1$, $\gamma_x = \gamma_y=\gamma_z/2=1$, $\ell=12$;
\item[] \hspace{-0.5cm}{\bf Case III.}  $\Omega=0$, $\gamma_x = \gamma_y =\gamma_z=1$, $\ell=7$;
\item[] \hspace{-0.5cm}{\bf Case IV.} $\Omega=0$, $\gamma_x = \gamma_y=\gamma_z/2=1$, $\ell=7$.
\end{itemize}
\end{exam}

\begin{figure}[htbp]
    \centering
    \begin{subfigure}[b]{0.47\textwidth}
  \includegraphics[width=0.48\textwidth, height=0.17\textheight]{./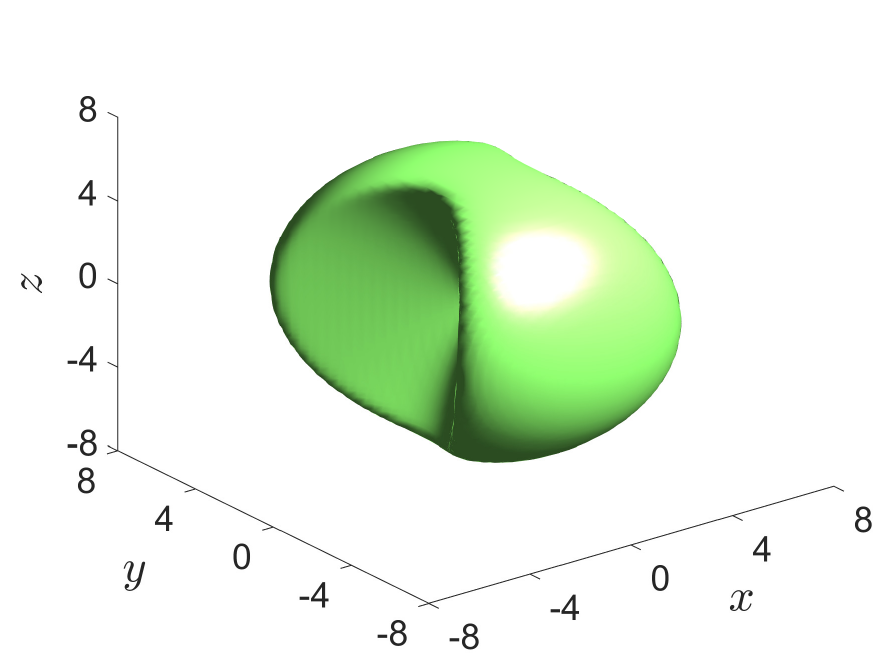}
  \includegraphics[width=0.48\textwidth, height=0.17\textheight]{./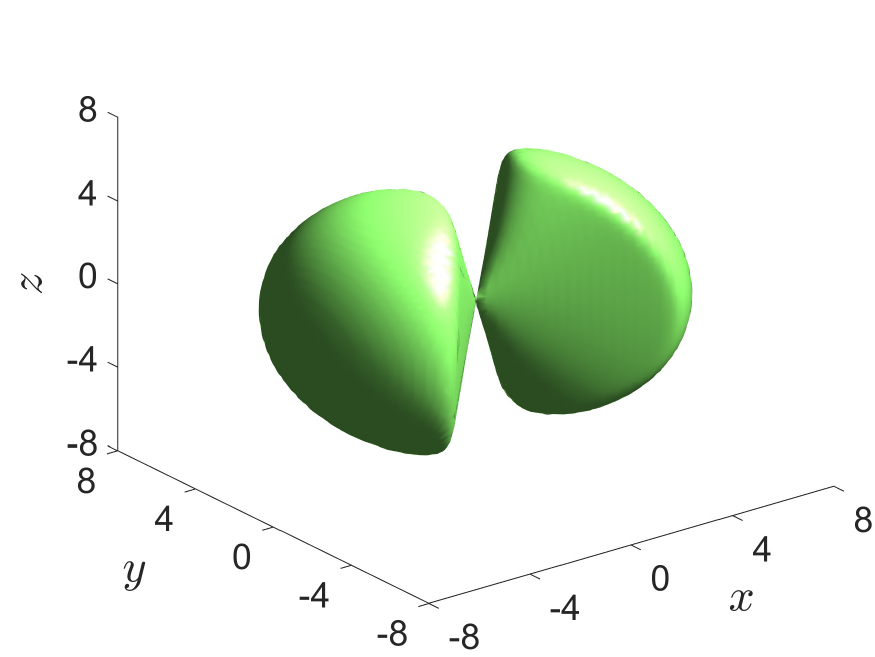}\\
  \includegraphics[width=0.48\textwidth, height=0.17\textheight]{./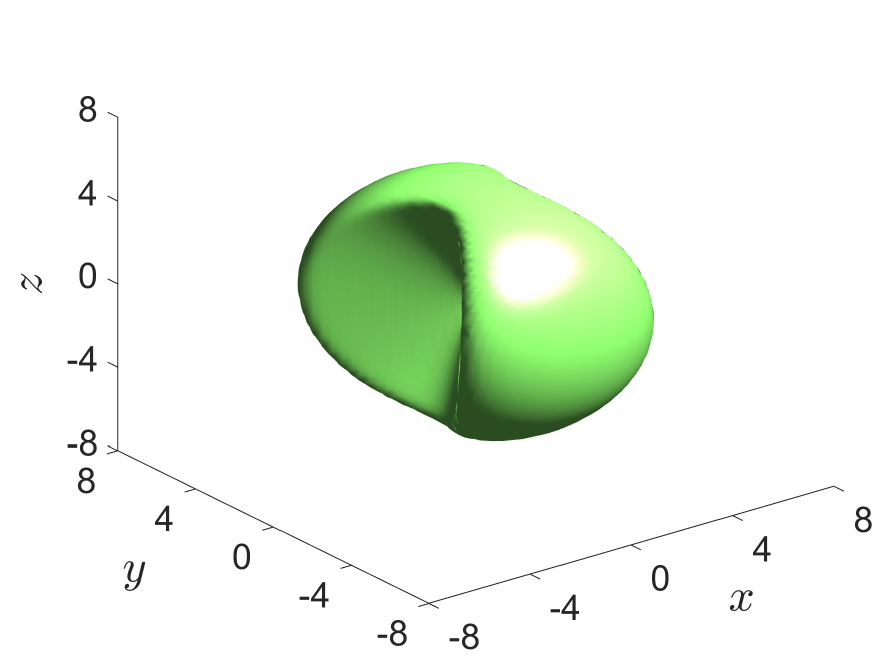}
  \includegraphics[width=0.48\textwidth, height=0.17\textheight]{./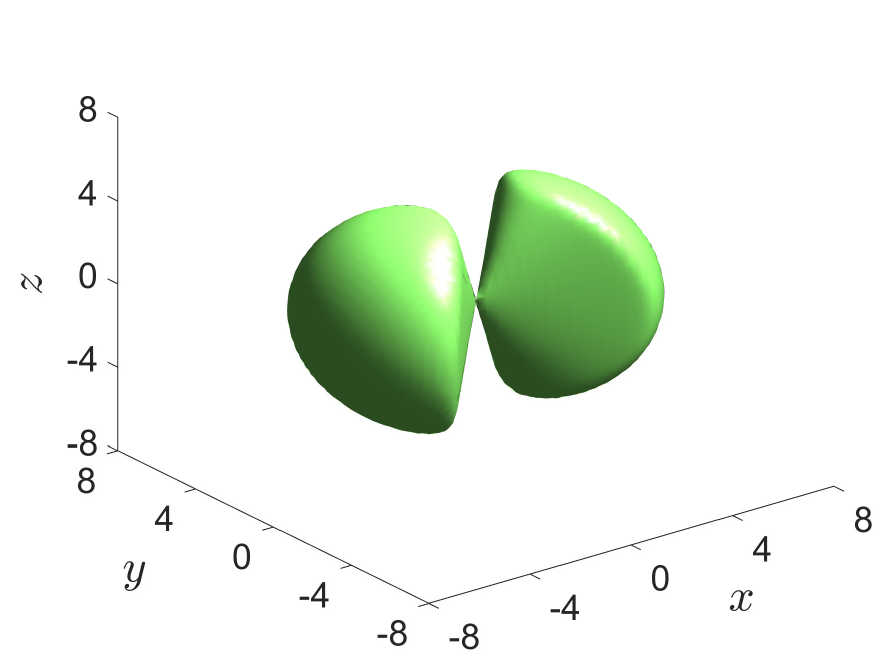}
  \caption{{\bf Case I}: $u_1^{12},u_2^{12}$ (top) and $v_1^{12},v_2^{12}$ (bottom).}
        \label{fig:sub1A1}
    \end{subfigure}
    \hfill
    \begin{subfigure}[b]{0.47\textwidth}
  \includegraphics[width=0.48\textwidth, height=0.17\textheight]{./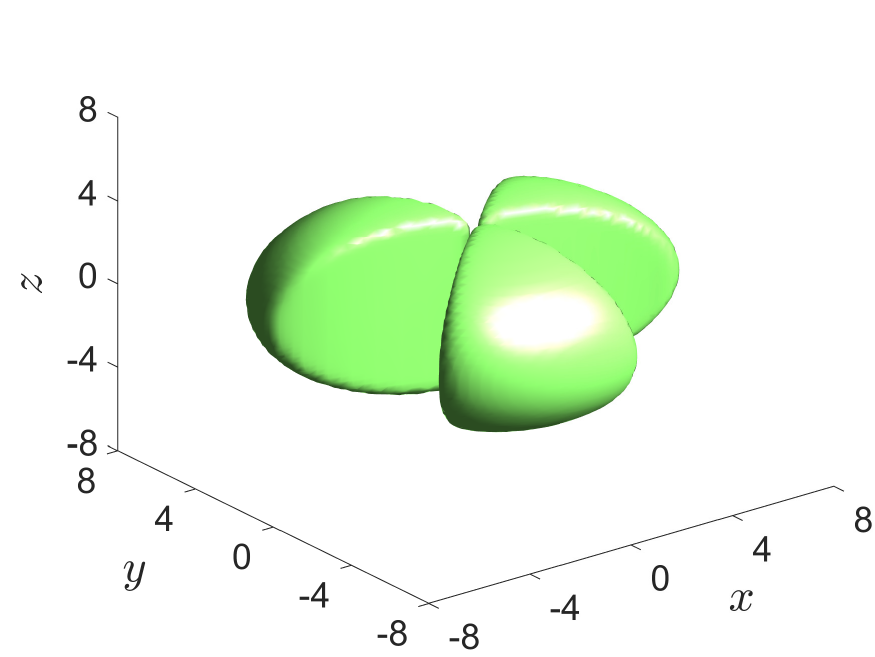}
  \includegraphics[width=0.48\textwidth, height=0.17\textheight]{./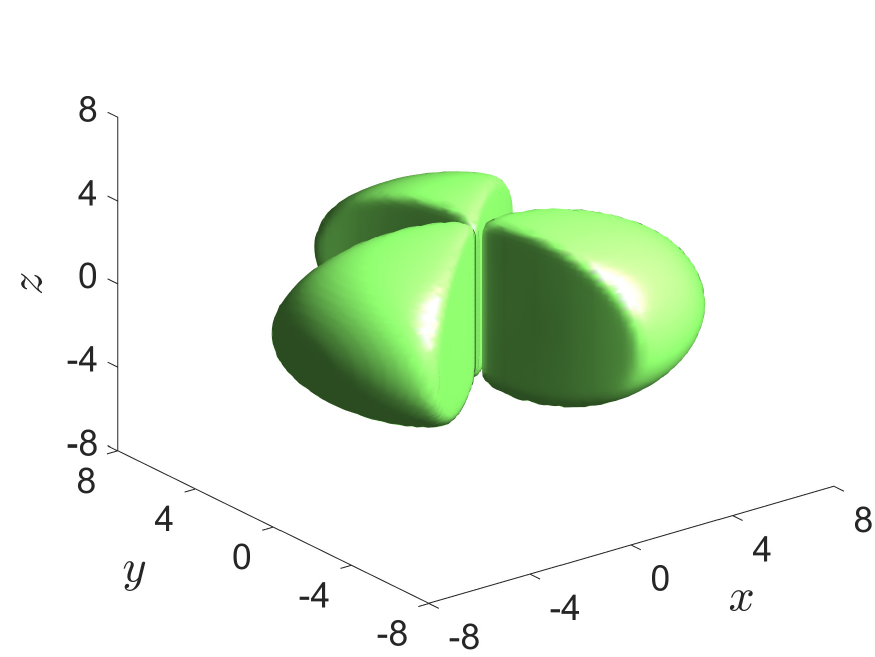}\\
  \includegraphics[width=0.48\textwidth, height=0.17\textheight]{./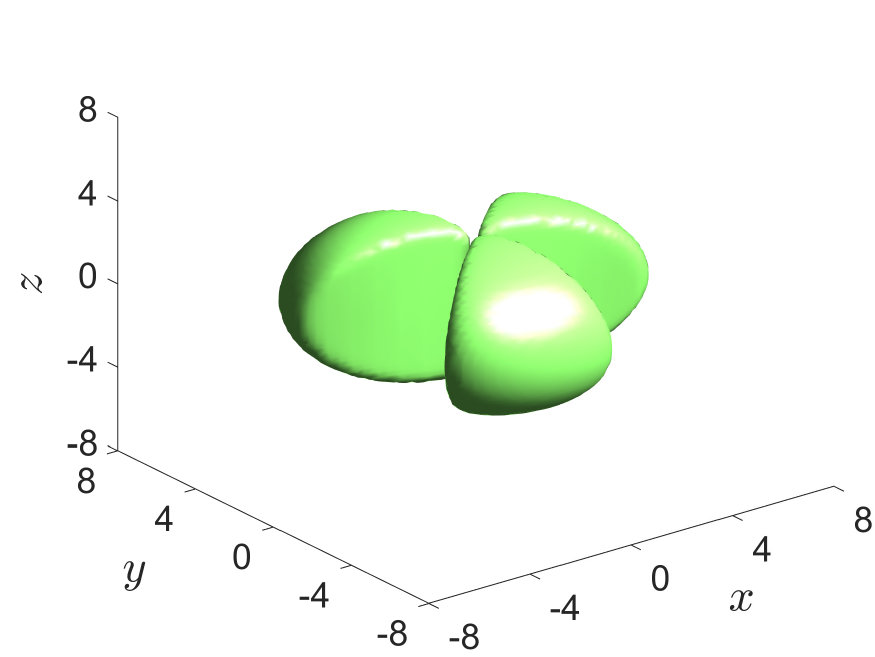}
  \includegraphics[width=0.48\textwidth, height=0.17\textheight]{./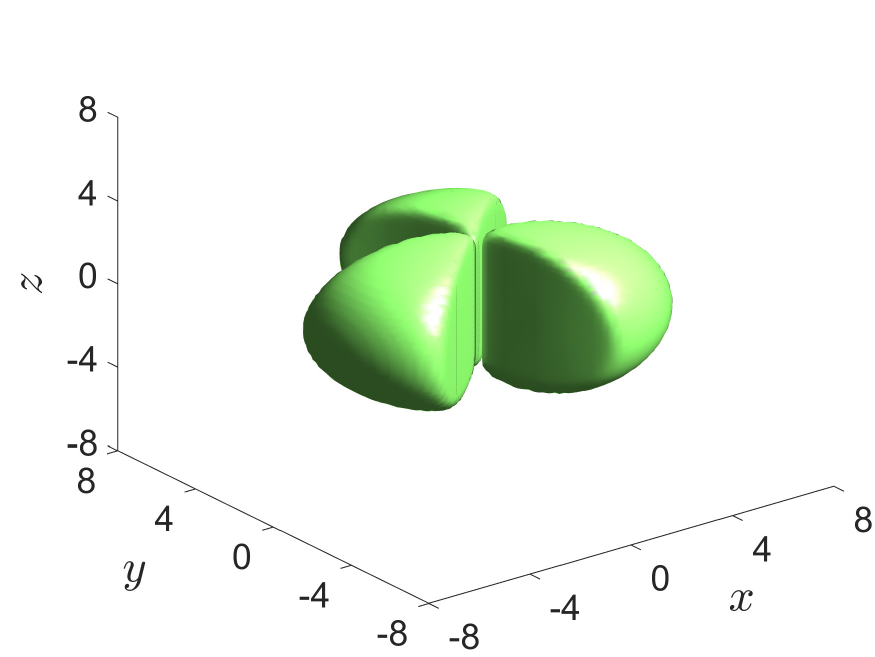}
        \caption{{\bf Case II}: $u_1^{12},u_2^{12}$ (top) and $v_1^{12},v_2^{12}$ (bottom).}
     \label{fig:sub1B1}
    \end{subfigure}
    \caption{Isosurface plots of the Bogoliubov amplitudes ($u_j^\ell=10^{-8}, v_j^\ell = 10^{-8}$) for {\bf Case I \& II}. }
    \label{eigv_iso_wJJ_111}
\end{figure}

\begin{figure}[htbp]
    \centering
    \begin{subfigure}[b]{0.47\textwidth}
  \includegraphics[width=0.48\textwidth, height=0.17\textheight]{./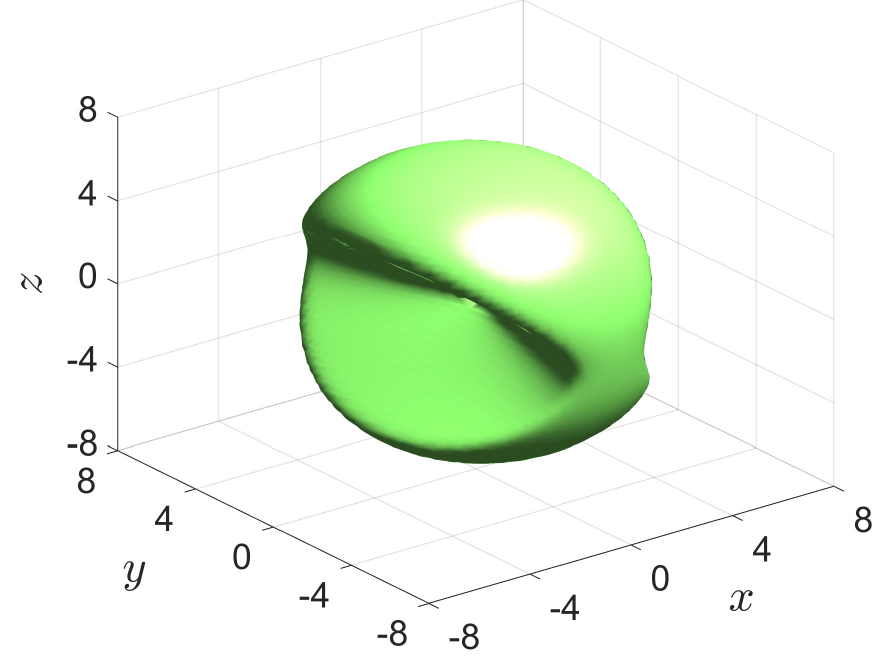}
  \includegraphics[width=0.48\textwidth, height=0.17\textheight]{./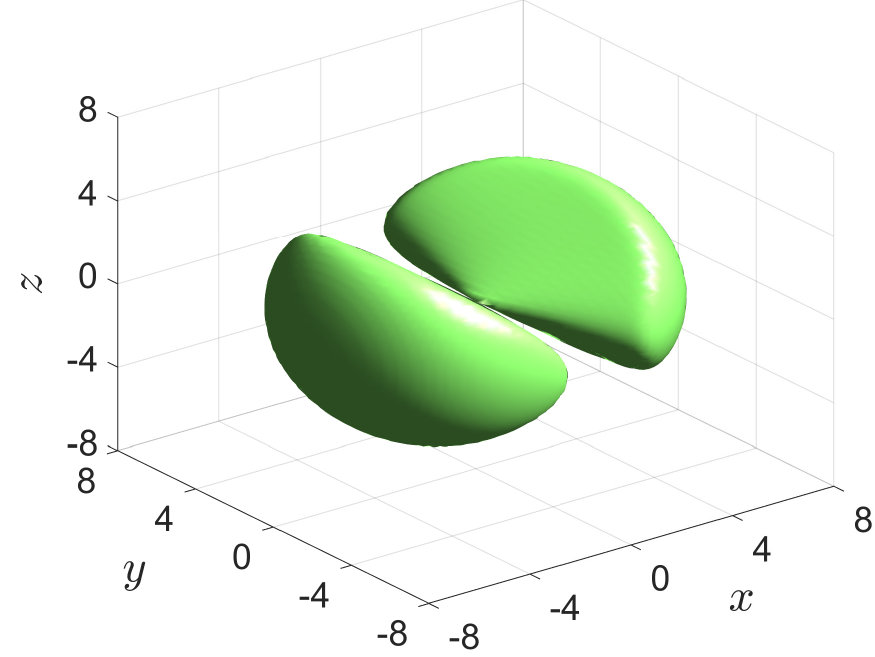} \\
  \includegraphics[width=0.48\textwidth, height=0.17\textheight]{./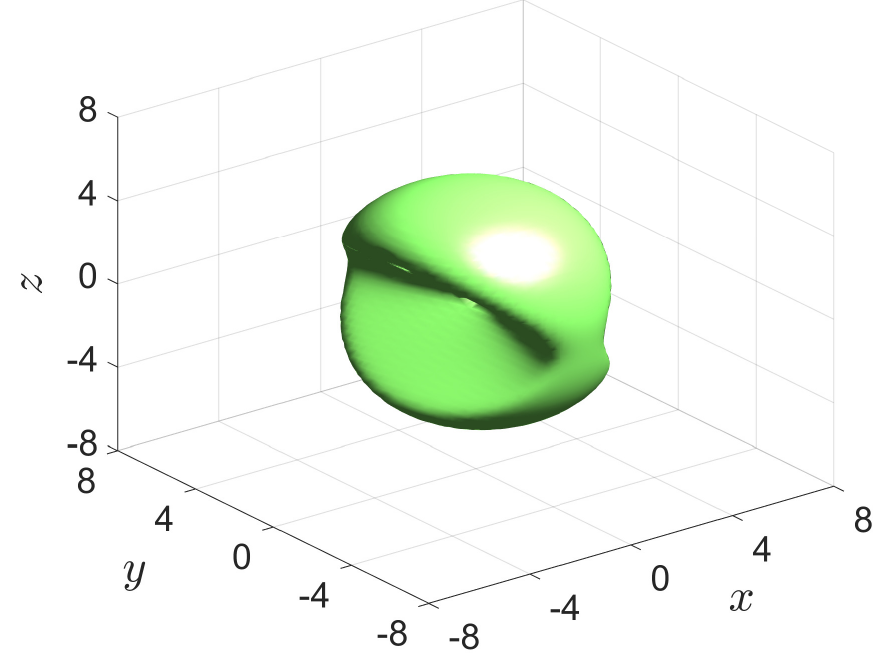}
  \includegraphics[width=0.48\textwidth, height=0.17\textheight]{./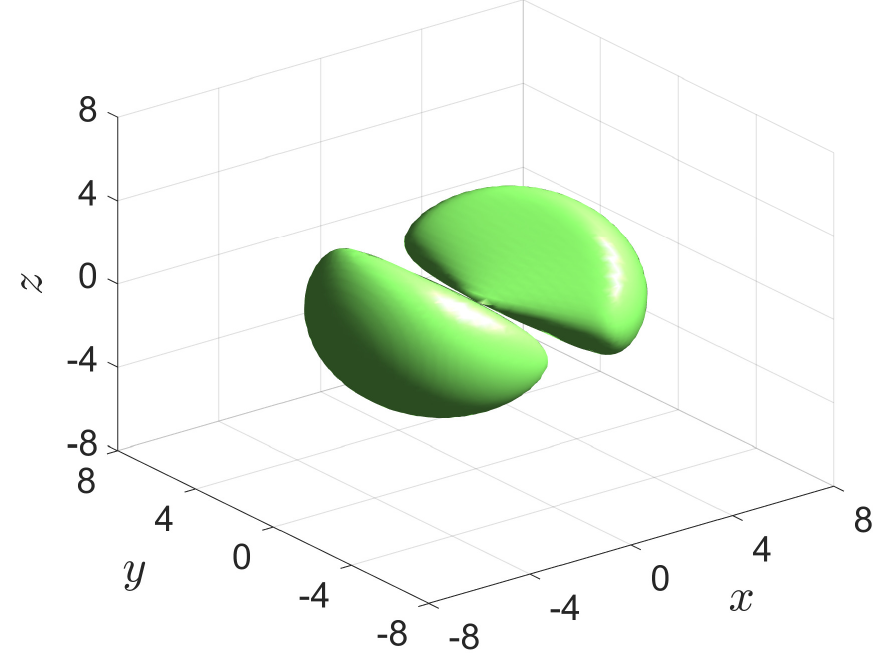}
  \caption{\textbf{Case III}: $u_1^{7},u_2^{7}$ (top) and $v_1^{7},v_2^{7}$ (bottom).}
        \label{fig:sub2A1}
    \end{subfigure}
    \hfill
    \begin{subfigure}[b]{0.47\textwidth}
  \includegraphics[width=0.48\textwidth, height=0.17\textheight]{./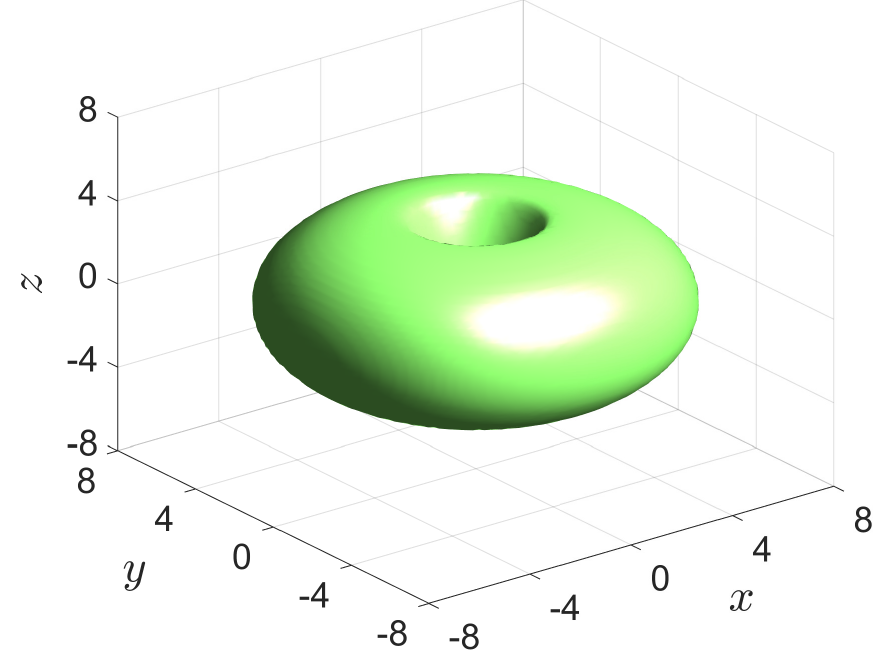}
  \includegraphics[width=0.48\textwidth, height=0.17\textheight]{./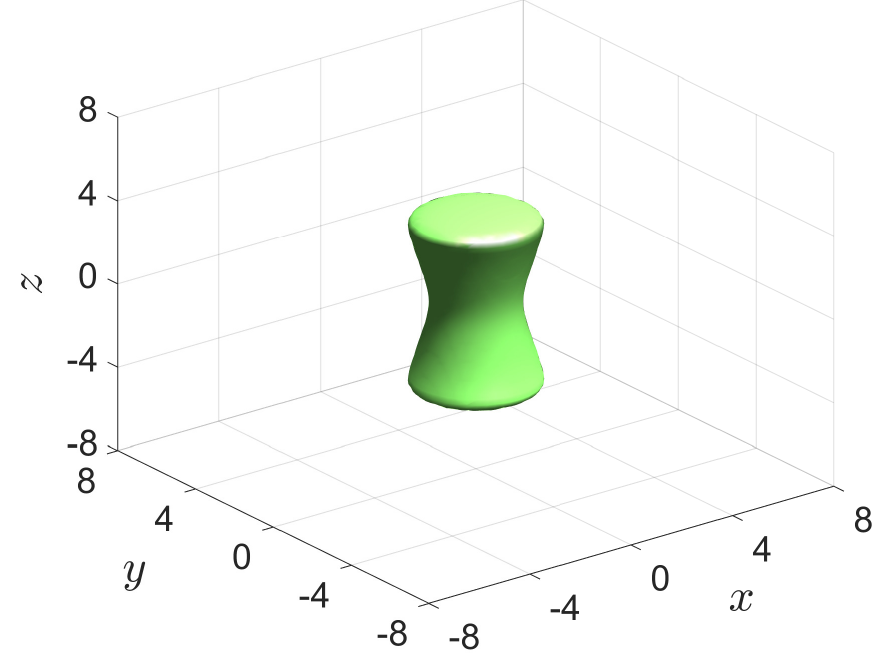}\\
  \includegraphics[width=0.48\textwidth, height=0.17\textheight]{./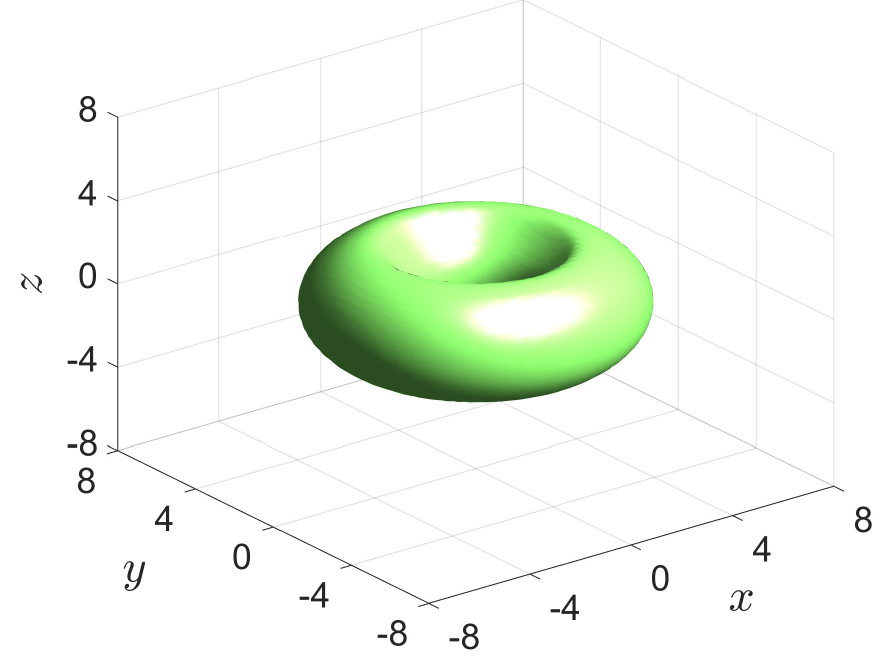}
  \includegraphics[width=0.48\textwidth, height=0.17\textheight]{./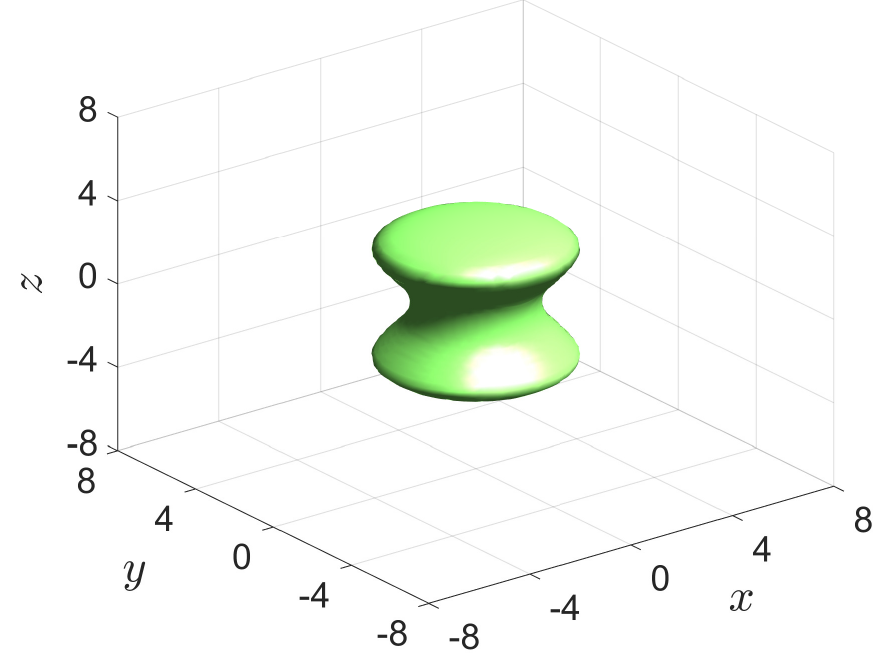}
        \caption{\textbf{Case IV}: $u_1^{7},u_2^{7}$ (top) and $v_1^{7},v_2^{7}$ (bottom).}
        \label{fig:sub2B1}
    \end{subfigure}
    \caption{Isosurface plots of the Bogoliubov amplitudes ($u_j^\ell=10^{-8}, v_j^\ell = 10^{-8}$) for {\bf Case III \& IV}. }
    \label{eigv_iso_woJJ_111}
\end{figure}

Figure \ref{eigv_iso_wJJ_111}-\ref{eigv_iso_woJJ_111} display isosurface plots of the eigenmodes
$\bu^\ell = (u^\ell_1,u^\ell_{2})^\top$ and $\bv^\ell = (v^\ell_1,v^\ell_{2})^\top$ associated with the different $\ell$ for {\bf Case I--II} and {\bf Case III--IV} in Example \ref{amplitudes1}, respectively.
From these figures, we can see that the external potential affects the shape of the eigenmodes $\bu^\ell$ and $\bv^\ell$ obviously.
It seems that the isosurface plots of the 1 and 2 components are complementary to each other.
Meanwhile, all eigenmodes will be compressed along the direction with a larger trapping frequency.
The presence of Josephson junction and isotropic/anisotropic external potential brings in many rich phase diagrams for eigenmodes,
and we shall leave such topic as a future study.



\section{Conclusion}
\label{sec:Conclusion}

In this work, we introduce a novel and highly efficient numerical method for solving the Bogoliubov-de Gennes (BdG) equations governing two-component BECs.
We first explore its analytical properties,
including the exact eigenpairs, structure of generalized nullspace, and bi-orthogonality of eigenspaces at a continuous level.
Then, by combining the Fourier spectral method and the modified Gram-Schmidt bi-orthogonal algorithm,
we propose a structure-preserving eigenvalue method for the resulting large-scale dense non-Hermitian discrete eigenvalue problem.
Particularly, we incorporate the generalized nullspace to deal with the notorious slow-convergence or even divergence phenomena
caused by eigenvalue zero, and achieved better numerical stability, convergence and efficiency.
We design an interactive interface for users to provide matrix-vector multiplication (or the operator-function evaluation),
therefore, no explicit matrix storage is required anymore, and it allows for efficient computations of large-scale and dense problems.
Specifically, the operator-function evaluation is implemented with a near-optimal complexity ${\mathcal O}(N_{\rm t}\log(N_{\rm t}))$ thanks to FFT.
Extensive numerical results are presented to showcase its superiority in terms of accuracy and efficiency.
We then investigate the excitation spectrum (eigenvalues) and Bogoliubov amplitudes (eigenfunctions) around the ground state
under different setups.
It is worth pointing out that we can easily extend our solver to BdG excitations associated with different BECs,
for example, the multi-component and spinor-dipolar BECs, with merely minor adjustments to the specific matrix-vector multiplication.


\section*{Acknowledgements}
We would like to thank Dr. Yu Li for his help with BOSP implementation.
This work was partially supported by the National Natural Science Foundation of China No.12271400,
the National Key R\&D Program of  China No. 2024YFA1012803. This work was partially supported by
Yunnan Key Laboratory of Modern Analytical Mathematics and Applications No. 202302AN360007
while the second author (Y. Zhang) was visiting Yunnan Normal University in 2025.

\bibliographystyle{plain}

\end{document}